\newcommand{\lyxaddress}[1]{
\par {\raggedright #1
\vspace{1.4em}
\noindent\par}
}
\newtheorem{theorem}{Theorem}
\newtheorem{proposition}[theorem]{Proposition}
\newtheorem{lemma}[theorem]{Lemma}
\newtheorem*{corollary*}{Corollary}
\theoremstyle{remark}
\newtheorem{remark}[theorem]{Remark}
\newtheorem*{question*}{QUESTION}
\newtheorem*{remark*}{Remark}
\newcommand{\der}{\mathop\mathrm{der}\nolimits}
\newcommand{\spec}{\mathop\mathrm{spec}\nolimits}
\renewcommand{\Im}{\mathop\mathrm{Im}\nolimits}
\renewcommand{\Re}{\mathop\mathrm{Re}\nolimits}
\begin{document}

\title{Special functions and spectrum of Jacobi matrices}

\author{F.~\v{S}tampach$^{1}$, P.~\v{S}\v{t}ov\'\i\v{c}ek$^{2}$}

\date{{}}

\maketitle

\lyxaddress{$^{1}$Department of Applied Mathematics, Faculty of Information
Technology, Czech Technical University in~Prague, Kolejn\'\i~2,
160~00 Praha, Czech Republic}

\lyxaddress{$^{2}$Department of Mathematics, Faculty of Nuclear Science, Czech
Technical University in Prague, Trojanova 13, 12000 Praha, Czech Republic}
\begin{abstract}
\noindent Several examples of Jacobi matrices with an explicitly
solvable spectral problem are worked out in detail. In all discussed
cases the spectrum is discrete and coincides with the set of zeros
of a special function. Moreover, the components of corresponding eigenvectors
are expressible in terms of special functions as well. Our approach
is based on a recently developed formalism providing us with explicit
expressions for the characteristic function and eigenvectors of Jacobi
matrices. This is done under an assumption of a simple convergence
condition on matrix entries. Among the treated special functions there
are regular Coulomb wave functions, confluent hypergeometric functions,
q-Bessel functions and q-confluent hypergeometric functions. In addition,
in the case of q-Bessel functions, we derive several useful identities.
\end{abstract}
\vskip\baselineskip\noindent\emph{Keywords}: infinite Jacobi matrix,
spectral problem, special functions

\vskip0.5\baselineskip\noindent\emph{2010 Mathematical Subject
Classification}: 47B36, 39A70, 47A10, 33D15

\section{Introduction}

Special functions usually depend on a complex variable and an additional
parameter called order. Typically, they obey a three-term recurrence
relation with respect to the order. This is the basis of their relationship
to Jacobi (tridiagonal) matrices. In more detail, the zeros of an
appropriate special function are directly related to eigenvalues of
a Jacobi matrix operator, and components of corresponding eigenvectors
can be expressed in terms of special functions as well. One may also
say that the characteristic function of the (infinite) matrix operator
in question is written explicitly in terms of special functions. Particularly,
Gard and Zakraj\v{s}ek reported in \cite{GardZakrajsek} a matrix
equation approach for numerical computation of the zeros of Bessel
functions; on this point see also \cite{IkebeKikuchiFujishiro}. In
\cite{Ikebe}, Ikebe then showed that the same approach was applicable,
too, for determining the zeros of regular Coulomb wave functions.
In practical computations, an infinite tridiagonal matrix should be
truncated which raises a question of error estimates. Such an analysis
has been carried out in \cite{Ikebeatal93,MiyazakiKikuchiCaiIkebe}.

In \cite{StampachStovicek}, the authors initiated an approach to
a class of Jacobi matrices with discrete spectra. The basic tool is
a function $\mathfrak{F}$ depending on a countable number of variables.
In more detail, we define $\mathfrak{F}:D\rightarrow\mathbb{C}$,
\begin{equation}
\mathfrak{F}(x)=1+\sum_{m=1}^{\infty}(-1)^{m}\sum_{k_{1}=1}^{\infty}\,\sum_{k_{2}=k_{1}+2}^{\infty}\,\dots\,\sum_{k_{m}=k_{m-1}+2}^{\infty}\, x_{k_{1}}x_{k_{1}+1}x_{k_{2}}x_{k_{2}+1}\dots x_{k_{m}}x_{k_{m}+1},\label{eq:def_calF}
\end{equation}
where the set $D$ is formed by complex sequences $x=\{x_{k}\}_{k=1}^{\infty}$
obeying

\begin{equation}
\sum_{k=1}^{\infty}|x_{k}x_{k+1}|<\infty.\label{eq:x_converg_cond}
\end{equation}
For a finite number of variables we identify $\mathfrak{F}(x_{1},x_{2},\dots,x_{n})$
with $\mathfrak{F}(x)$ where $x=(x_{1},x_{2},\dots,x_{n},0,0,0,\dots)$.
By convention, we put $\mathfrak{F}(\emptyset)=1$ where $\emptyset$
is the empty sequence. Notice that the domain $D$ is not a linear
space though $\ell^{2}(\mathbb{N})\subset D$.

In the same paper, two examples are given of special functions expressed
directly in terms of $\mathfrak{F}$. The first example is concerned
with Bessel functions of the first kind. For $w,\nu\in\mathbb{C}$,
$\nu\notin-\mathbb{N}$, one has 
\begin{equation}
J_{\nu}(2w)=\frac{w^{\nu}}{\Gamma(\nu+1)}\,\mathfrak{F}\!\left(\left\{ \frac{w}{\nu+k}\right\} _{k=1}^{\infty}\right)\!.\label{eq:BesselJ_rel_F}
\end{equation}
Secondly, the formula 
\begin{equation}
\mathfrak{F}\!\left(\left\{ t^{k-1}w\right\} _{k=1}^{\infty}\right)=1+\sum_{m=1}^{\infty}(-1)^{m}\,\frac{t^{m(2m-1)}w^{2m}}{(1-t^{2})(1-t^{4})\dots(1-t^{2m})}={}_{0}\phi_{1}(;0;t^{2},-tw^{2})\label{eq:F_geom_tw}
\end{equation}
holds for $t,w\in\mathbb{C}$, $|t|<1$. Here $_{0}\phi_{1}$ is the
basic hypergeometric series (also called q-hypergeometric series)
being defined by
\[
_{0}\phi_{1}(;b;q,z)=\sum_{k=0}^{\infty}\frac{q^{k(k-1)}}{(q;q)_{k}(b;q)_{k}}\, z^{k},
\]
and
\[
(a;q)_{k}=\prod_{j=0}^{k-1}\left(1-aq^{j}\right),\mbox{ }k=0,1,2,\ldots,
\]
is the $q$-Pochhammer symbol, see \cite{GasperRahman}.

In \cite{StampachStovicek2}, the approach is further developed and
a construction in terms of $\mathfrak{F}$ of the characteristic function
of certain Jacobi matrices is established. As an application, a series
of examples of Jacobi matrices with explicitly expressible characteristic
functions is described. The method works well for Jacobi matrices
obeying a simple convergence condition imposed on the matrix entries
which is in principle dictated by condition (\ref{eq:x_converg_cond})
characterizing the domain of $\mathfrak{F}$.

In the current paper, we present more interesting examples of Jacobi
matrices whose spectrum coincides with the set of zeros of a particular
special function. As a byproduct, we provide examples of sequences
on which the function $\mathfrak{F}$ can be evaluated explicitly.
The paper is organized as follows. In Section~\ref{sec:Preliminaries}
we recall from \cite{StampachStovicek,StampachStovicek2} some basic
facts needed in the current paper. Section~\ref{sec:Coulomb-wave-functions}
is concerned with regular Coulomb wave functions. Here we reconsider
the example due to Ikebe while using our formalism. In Section~\ref{sec:Confluent-hypergeometric-fun}
we deal with confluent hypergeometric functions. Here we go beyond
the above mentioned convergence condition (see (\ref{eq:assum_sum_w})
below) which is violated in this example. Section~\ref{sec:Q-Bessel-functions}
is concerned with q-Bessel functions. This example is particular in
that respect that the constructed second order difference operator
is bilateral, i.e. it acts in $\ell^{2}(\mathbb{Z})$ rather than
in $\ell^{2}(\mathbb{N})$. We first derive several useful properties
of q-Bessel functions and then we use this knowledge to solve the
spectral problem for the bilateral difference operator fully explicitly.
Finally, another interplay between special functions, namely q-confluent
hypergeometric functions, and an appropriate Jacobi matrix is demonstrated
in Section~\ref{sec:Q-confluent-hypergeometric-fun}.

\section{Preliminaries \label{sec:Preliminaries}}

Let us recall from \cite{StampachStovicek,StampachStovicek2} some
basic facts concerning the function $\mathfrak{F}$ and its properties
and possible applications. First of all, quite crucial property of
$\mathfrak{F}$ is the recurrence rule 
\begin{equation}
\mathfrak{F}\!\left(\left\{ x_{k}\right\} _{k=1}^{\infty}\right)=\mathfrak{F}\!\left(\left\{ x_{k}\right\} _{k=2}^{\infty}\right)-x_{1}x_{2}\,\mathfrak{F}\!\left(\left\{ x_{k}\right\} _{k=3}^{\infty}\right).\label{eq:F_T_recur}
\end{equation}
In addition, $\mathfrak{F}(x_{1},x_{2},\dots,x_{k-1},x_{k})=\mathfrak{F}(x_{k},x_{k-1},\dots,x_{2},x_{1})$.
Furthermore, for $x\in D$,
\begin{equation}
\lim_{n\rightarrow\infty}\mathfrak{F}\!\left(\left\{ x_{k}\right\} _{k=n}^{\infty}\right)=1\ \ \text{and}\ \ \lim_{n\rightarrow\infty}\mathfrak{F}(x_{1},x_{2},\dots,x_{n})=\mathfrak{F}(x).\label{eq:lim_inf_F}
\end{equation}

Let us note that the definition of $\mathfrak{F}$ naturally extends
to more general ranges of indices. For any sequence $\left\{ x_{n}\right\} _{n=N_{1}}^{N_{2}}$,
$N_{1},N_{2}\in\mathbb{Z}\cup\{-\infty,+\infty\}$, $N_{1}\leq N_{2}+1$,
(if $N_{1}=N_{2}+1\in\mathbb{Z}$ then the sequence is considered
as empty) such that $\sum_{k=N_{1}}^{N_{2}-1}\left|x_{k}x_{k+1}\right|<\infty$
one defines
\[
\mathfrak{F}\!\left(\left\{ x_{k}\right\} _{k=N_{1}}^{N_{2}}\right)=1+\sum_{m=1}^{\infty}(-1)^{m}\sum_{k\in\mathcal{I}(N_{1},N_{2},m)}x_{k_{1}}x_{k_{1}+1}x_{k_{2}}x_{k_{2}+1}\ldots x_{k_{m}}x_{k_{m}+1}
\]
where
\[
\mathcal{I}(N_{1},N_{2},m)=\left\{ k\in\mathbb{Z}^{m};\, k_{j}+2\leq k_{j+1}\text{ }\text{for}\ 1\leq j\leq m-1,\ N_{1}\leq k_{1},\ k_{m}<N_{2}\right\} .
\]
With this definition, one has the generalized recurrence rule
\begin{equation}
\mathfrak{F}\!\left(\left\{ x_{k}\right\} _{k=N_{1}}^{N_{2}}\right)=\mathfrak{F}\!\left(\left\{ x_{k}\right\} _{k=N_{1}}^{n}\right)\mathfrak{F}\!\left(\left\{ x_{k}\right\} _{k=n+1}^{N_{2}}\right)-x_{n}x_{n+1}\,\mathfrak{F}\!\left(\left\{ x_{k}\right\} _{k=N_{1}}^{n-1}\right)\mathfrak{F}\!\left(\left\{ x_{k}\right\} _{k=n+2}^{N_{2}}\right)\label{eq:gen_recurrence}
\end{equation}
provided $n\in\mathbb{Z}$ satisfies $N_{1}\leq n<N_{2}$.

Let us denote by $J$ an infinite Jacobi matrix of the form
\begin{equation}
J=\begin{pmatrix}\lambda_{1} & w_{1}\\
w_{1} & \lambda_{2} & w_{2}\\
 & w_{2} & \lambda_{3} & w_{3}\\
 &  & \ddots & \ddots & \ddots
\end{pmatrix}\label{eq:matrixJ}
\end{equation}
where $\{w_{n};\ n\in\mathbb{N}\}\subset\mathbb{C\setminus}\{0\}$
and $\{\lambda_{n};\ n\in\mathbb{N}\}\subset\mathbb{C}$. In all examples
treated in the current paper, the matrix $J$ determines in a natural
way a unique closed operator in $\ell^{2}(\mathbb{N})$ (in other
words, $J_{\text{min}}=J_{\text{max}}$; see, for instance, \cite{Beckerman}).
If the matrix is real then the operator is self-adjoint. For the sake
of simplicity of the notation the operator is again denoted by $J.$
One notes, too, that all eigenvalues of $J$, if any, are simple since
any solution $\{x_{k}\}$ of the formal eigenvalue equation
\begin{equation}
\lambda_{1}x_{1}+w_{1}x_{2}=zx_{1},\ \ w_{k-1}x_{k-1}+\lambda_{k}x_{k}+w_{k}x_{k+1}=zx_{k}\ \ \text{\text{for }}k\geq2,\label{eq:eigenvalue}
\end{equation}
with $z\in\mathbb{C}$, is unambiguously determined by its first component
$x_{1}$.

Let $\{\gamma_{k}\}$ be any sequence fulfilling $\gamma_{k}\gamma_{k+1}=w_{k}$,
$k\in\mathbb{N}$. If $J_{n}$ is the principal $n\times n$ submatrix
of $J$ then
\begin{equation}
\det(J_{n}-zI_{n})=\left(\prod_{k=1}^{n}(\lambda_{k}-z)\right)\mathfrak{F}\!\left(\frac{\gamma_{1}^{\,2}}{\lambda_{1}-z},\frac{\gamma_{2}^{\,2}}{\lambda_{2}-z},\dots,\frac{\gamma_{n}^{\,2}}{\lambda_{n}-z}\right)\!.\label{eq:charpoly}
\end{equation}

The function $\mathfrak{F}$ can also be applied to bilateral difference
equations. Suppose that sequences $\left\{ w_{n}\right\} _{n=-\infty}^{\infty}$
and $\left\{ \zeta_{n}\right\} _{n=-\infty}^{\infty}$ are such that
$w_{n}\neq0$, $\zeta_{n}\neq0$ for all $n$, and

\[
\sum_{k=-\infty}^{\infty}\left|\frac{w_{k}^{\,2}}{\zeta_{k}\zeta_{k+1}}\right|<\infty.
\]
Consider the difference equation
\begin{equation}
w_{n}x_{n+1}-\zeta_{n}x_{n}+w_{n-1}x_{n-1}=0,\ n\in\mathbb{Z}.\label{eq:diff_eq_inf}
\end{equation}
Define the sequence $\left\{ \mathcal{P}_{n}\right\} _{n\in\mathbb{Z}}$
by $\mathcal{P}_{0}=1$ and $\mathcal{P}_{n+1}=(w_{n}/\zeta_{n+1})\mathcal{P}_{n}$
for all $n$. The sequence $\{\gamma_{n}\}_{n\in\mathbb{Z}}$ is again
defined by the rule $\gamma_{n}\gamma_{n+1}=w_{n}$ for all $n\in\mathbb{Z}$,
and any choice of $\gamma_{1}\neq0$. Then the sequences $\left\{ f_{n}\right\} _{n\in\mathbb{Z}}$
and $\left\{ g_{n}\right\} _{n\in\mathbb{Z}}$,

\begin{equation}
f_{n}=\mathcal{P}_{n}\,\mathfrak{F}\!\left(\left\{ \frac{\gamma_{k}^{\,2}}{\zeta_{k}}\right\} _{k=n+1}^{\infty}\right)\!,\ g_{n}=\frac{1}{w_{n-1}\mathcal{P}_{n-1}}\,\mathfrak{F}\!\left(\left\{ \frac{\gamma_{k}^{\,2}}{\zeta_{k}}\right\} _{k=-\infty}^{n-1}\right)\!,\label{eq:sols_diff_eq_inf}
\end{equation}
represent two solutions of the bilateral difference equation (\ref{eq:diff_eq_inf}).
With the usual definition of the Wronskian, $\mathcal{W}(f,g)=w_{n}\left(f_{n}g_{n+1}-f_{n+1}g_{n}\right)$,
one has
\begin{equation}
\mathcal{W}(f,g)=\,\mathfrak{F}\!\left(\left\{ \gamma_{n}^{\,2}\big/\zeta_{n}\right\} _{n=-\infty}^{\infty}\right)\!.\label{eq:Wronskian_fg}
\end{equation}

For $\lambda=\{\lambda_{n}\}_{n=1}^{\infty}$ let us denote $\mathbb{C}_{0}^{\lambda}:=\mathbb{C}\setminus\overline{\{\lambda_{n};\, n\in\mathbb{N}\}}$,
and let $\der(\lambda)$ stand for the set of all finite accumulation
points of the sequence $\lambda$. Further, for $z\in\mathbb{C}\setminus\der(\lambda)$,
let $r(z)$ be the number of members of the sequence $\lambda$ coinciding
with $z$ (hence $r(z)=0$ for $z\in\mathbb{C}_{0}^{\lambda}$). We
assume everywhere that $\mathbb{C}_{0}^{\lambda}\neq\emptyset$.

Suppose
\begin{equation}
\sum_{n=1}^{\infty}\left|\frac{w_{n}^{\,2}}{(\lambda_{n}-z_{0})(\lambda_{n+1}-z_{0})}\right|<\infty\label{eq:assum_sum_w}
\end{equation}
for at least one $z_{0}\in\mathbb{C}_{0}^{\lambda}$. Then (\ref{eq:assum_sum_w})
is true for all $z_{0}\in\mathbb{C}_{0}^{\lambda}$ \cite{StampachStovicek2}.
In particular, the following definitions make good sense. For $k\in\mathbb{Z}_{+}$
($\mathbb{Z}_{+}$ standing for nonnegative integers) and $z\in\mathbb{C}\setminus\der(\lambda)$
put
\begin{equation}
\xi_{k}(z):=\lim_{u\to z}\,(u-z)^{r(z)}\left(\prod_{l=1}^{k}\,\frac{w_{l-1}}{u-\lambda_{l}}\right)\!\mathfrak{F}\!\left(\left\{ \frac{\gamma_{l}^{\,2}}{\lambda_{l}-u}\right\} _{l=k+1}^{\infty}\right)\!.\label{eq:def_xi_k}
\end{equation}
Here one sets $w_{0}:=1$. Particularly, for $z\in\mathbb{C}_{0}^{\lambda}$,
one simply has
\begin{equation}
\xi_{k}(z)=\left(\prod_{l=1}^{k}\,\frac{w_{l-1}}{z-\lambda_{l}}\right)\mathfrak{F}\!\left(\left\{ \frac{\gamma_{l}^{\,2}}{\lambda_{l}-z}\right\} _{l=k+1}^{\infty}\right)\label{eq:def_xi_k_meromorph}
\end{equation}
(this is in fact nothing but the solution $f_{n}$ from (\ref{eq:sols_diff_eq_inf})
restricted to nonnegative indices). All functions $\xi_{k}(z)$, $k\in\mathbb{Z}_{+}$,
are holomorphic on $\mathbb{C}_{0}^{\lambda}$ and extend to meromorphic
functions on $\mathbb{C}\setminus\der(\lambda)$, with poles at the
points $z=\lambda_{n}$, $n\in\mathbb{N}$, and with orders of the
poles not exceeding $r(z)$. This justifies definition (\ref{eq:def_xi_k}).

The sequence $\{\xi_{k}(z)\}$ solves the second-order difference
equation
\begin{equation}
w_{k-1}x_{k-1}+(\lambda_{k}-z)x_{k}+w_{k}x_{k+1}=0\ \ \text{\text{for }}k\geq2.\label{eq:2ndorder_diff_eq_param_z}
\end{equation}
In addition, $(\lambda_{1}-z)\xi_{1}(z)+w_{1}\xi_{2}(z)=0$ provided
$\xi_{0}(z)=0$. Proceeding this way one can show \cite[Section~3.3]{StampachStovicek2}
that if $\xi_{0}(z)$ does not vanish identically on $\mathbb{C}_{0}^{\lambda}$
then
\begin{equation}
\spec(J)\setminus\der(\lambda)=\{z\in\mathbb{C}\setminus\der(\lambda);\ \xi_{0}(z)=0\}.\label{eq:specJ_xi0}
\end{equation}
Moreover, if $z\in\mathbb{C}\setminus\der(\lambda)$ is an eigenvalue
of $J$ then $\xi(z):=\left(\xi_{1}(z),\xi_{2}(z),\xi_{3}(z),\ldots\right)$
is a corresponding eigenvector. If $J$ is real and $z\in\mathbb{R}\cap\mathbb{C}_{0}^{\lambda}$
is an eigenvalue then $\|\xi(z)\|^{2}=\xi'_{0}(z)\xi_{1}(z)$. Finally,
let us remark that the Weyl m-function can be expressed as $m(z)=\xi_{1}(z)/\xi_{0}(z)$.

\begin{lemma} \label{lem:identity_FF} For $p,r,\ell\in\mathbb{N}$,
$1<p\leq r+1\leq\ell$, and any $\ell$-tuple of complex numbers $x_{j}$,
$1\leq j\leq\ell$, it holds true that
\begin{eqnarray*}
 &  & \mathfrak{F}\!\left(\left\{ x_{j}\right\} _{j=1}^{r}\right)\mathfrak{F}\!\left(\left\{ x_{j}\right\} _{j=p}^{\ell}\right)-\mathfrak{F}\!\left(\left\{ x_{j}\right\} _{j=1}^{\ell}\right)\mathfrak{F}\!\left(\left\{ x_{j}\right\} _{j=p}^{r}\right)\\
 &  & =\left(\prod_{j=p-1}^{r}x_{j}x_{j+1}\right)\mathfrak{F}\!\left(\left\{ x_{j}\right\} _{j=1}^{p-2}\right)\mathfrak{F}\!\left(\left\{ x_{j}\right\} _{j=r+2}^{\ell}\right)
\end{eqnarray*}
If  $p,r\in\mathbb{N}$, $1<p\leq r+1$, and a complex sequence $\left\{ x_{j}\right\} _{j=1}^{\infty}$
fulfills (\ref{eq:x_converg_cond}) then
\begin{eqnarray*}
 &  & \mathfrak{F}\!\left(\left\{ x_{j}\right\} _{j=1}^{r}\right)\mathfrak{F}\!\left(\left\{ x_{j}\right\} _{j=p}^{\infty}\right)-\mathfrak{F}\!\left(\left\{ x_{j}\right\} _{j=p}^{r}\right)\mathfrak{F}\!\left(\left\{ x_{j}\right\} _{j=1}^{\infty}\right)\\
 &  & =\left(\prod_{j=p-1}^{r}x_{j}x_{j+1}\right)\mathfrak{F}\!\left(\left\{ x_{j}\right\} _{j=1}^{p-2}\right)\mathfrak{F}\!\left(\left\{ x_{j}\right\} _{j=r+2}^{\infty}\right)\!.
\end{eqnarray*}
\end{lemma}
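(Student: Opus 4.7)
The plan is to prove the finite identity by induction on $r$ (with $p$ fixed) starting from the base case $r=p-1$, and then to derive the infinite-sequence version by passing to the limit $\ell\to\infty$.

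For the base case $r=p-1$, the factor $\mathfrak{F}(\{x_{j}\}_{j=p}^{r})=\mathfrak{F}(\emptyset)=1$, so the claim collapses to
\[
\mathfrak{F}\!\left(\{x_{j}\}_{j=1}^{p-1}\right)\mathfrak{F}\!\left(\{x_{j}\}_{j=p}^{\ell}\right)-\mathfrak{F}\!\left(\{x_{j}\}_{j=1}^{\ell}\right)=x_{p-1}x_{p}\,\mathfrak{F}\!\left(\{x_{j}\}_{j=1}^{p-2}\right)\mathfrak{F}\!\left(\{x_{j}\}_{j=p+1}^{\ell}\right),
\]
which is exactly the generalized recurrence~(\ref{eq:gen_recurrence}) specialized to $N_{1}=1$, $N_{2}=\ell$, $n=p-1$.

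For the inductive step, assuming the identity has been established for $r-1$ with an arbitrary right endpoint, I would treat the case $r\geq p$ by applying~(\ref{eq:gen_recurrence}) with $n=r$ simultaneously to $\mathfrak{F}(\{x_{j}\}_{j=1}^{\ell})$ and to $\mathfrak{F}(\{x_{j}\}_{j=p}^{\ell})$; both splittings are legal because $1\leq r<\ell$ and $p\leq r<\ell$. Substituting the two expansions into the left-hand side of the claim and collecting terms, the cross-products carrying the factor $\mathfrak{F}(\{x_{j}\}_{j=r+1}^{\ell})$ cancel exactly, leaving
\[
x_{r}x_{r+1}\,\mathfrak{F}\!\left(\{x_{j}\}_{j=r+2}^{\ell}\right)\Bigl[\mathfrak{F}(\{x_{j}\}_{j=1}^{r-1})\mathfrak{F}(\{x_{j}\}_{j=p}^{r})-\mathfrak{F}(\{x_{j}\}_{j=1}^{r})\mathfrak{F}(\{x_{j}\}_{j=p}^{r-1})\Bigr].
\]
The bracketed expression is precisely the left-hand side of the identity at parameters $(p,r-1,r)$; the inductive hypothesis therefore rewrites it as $\prod_{j=p-1}^{r-1}x_{j}x_{j+1}\cdot\mathfrak{F}(\{x_{j}\}_{j=1}^{p-2})$, using $\mathfrak{F}(\{x_{j}\}_{j=r+1}^{r})=\mathfrak{F}(\emptyset)=1$. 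Combining with the prefactor $x_{r}x_{r+1}\mathfrak{F}(\{x_{j}\}_{j=r+2}^{\ell})$ then reconstructs $\prod_{j=p-1}^{r}x_{j}x_{j+1}\cdot\mathfrak{F}(\{x_{j}\}_{j=1}^{p-2})\mathfrak{F}(\{x_{j}\}_{j=r+2}^{\ell})$, as required.

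The second (infinite-sequence) statement will follow by letting $\ell\to\infty$ in the finite identity. Under (\ref{eq:x_converg_cond}) each factor of the form $\mathfrak{F}(\{x_{j}\}_{j=a}^{\ell})$ that depends on $\ell$ tends to $\mathfrak{F}(\{x_{j}\}_{j=a}^{\infty})$ by (\ref{eq:lim_inf_F}) and its evident analogue for a shifted starting index, while the remaining factors are independent of $\ell$. The argument presents no serious obstacle; the only delicate point is the bookkeeping of side conditions for (\ref{eq:gen_recurrence}) — in particular the splitting of $\mathfrak{F}(\{x_{j}\}_{j=p}^{\ell})$ at $n=r$ requires $p\leq r$, which is why the case $r=p-1$ must be isolated as the base of the induction rather than absorbed into the inductive step.
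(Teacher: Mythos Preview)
Your argument is correct. The induction on $r$ with $p$ fixed goes through exactly as you describe: the base case $r=p-1$ is precisely~(\ref{eq:gen_recurrence}), the two splittings at $n=r$ in the inductive step produce the claimed cancellation, and the residual bracket is the instance $(p,r-1,r)$ of the hypothesis, whose right-hand side carries the factor $\mathfrak{F}(\{x_j\}_{j=r+1}^{r})=1$. The passage to $\ell=\infty$ via~(\ref{eq:lim_inf_F}) is routine.

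This is a genuinely different route from the paper's. The paper does not induct; instead it invokes from \cite{StampachStovicek2} the antisymmetric array $\mathfrak{J}(m,n)=\bigl(\prod_{j=m+1}^{n-1}z_j^{-1}\bigr)\mathfrak{F}(z_{m+1},\ldots,z_{n-1})$ together with the Pl\"ucker-type relation $\mathfrak{J}(m,k)\mathfrak{J}(n,\ell)-\mathfrak{J}(m,\ell)\mathfrak{J}(n,k)=\mathfrak{J}(m,n)\mathfrak{J}(k,\ell)$, and then specializes $(m,n,k,\ell)=(0,p-1,r+1,\ell+1)$; the desired identity falls out after cancelling the common $\prod z_j^{-1}$ prefactors. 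Your approach has the virtue of being entirely self-contained within the present paper, using only~(\ref{eq:gen_recurrence}), whereas the paper's argument imports a structural identity but is a one-line specialization once that identity is in hand. Both proofs ultimately rest on the same recurrence, since the Pl\"ucker relation in \cite{StampachStovicek2} is itself derived from it.
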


\begin{proof} Suppose $\left\{ z_{j}\right\} _{j=-\infty}^{\infty}$
is any nonvanishing bilateral complex sequence. In \cite[Section~2]{StampachStovicek2}
it is shown (under somewhat more general circumstances) that there
exists an antisymmetric matrix $\mathfrak{J}(m,n)$, $m,n\in\mathbb{Z}$,
such that
\[
\mathfrak{J}(m,n)=\left(\prod_{j=m+1}^{n-1}\frac{1}{z_{j}}\right)\mathfrak{F}\!\left(z_{m+1},z_{m+2},\ldots,z_{n-1}\right)
\]
for $m<n$, and $\mathfrak{J}(m,k)\mathfrak{J}(n,\ell)-\mathfrak{J}(m,\ell)\mathfrak{J}(n,k)=\mathfrak{J}(m,n)\mathfrak{J}(k,\ell)$
for all $m,n,k,\ell\in\mathbb{Z}$. In particular, assuming that indices
$p$, $r$, $\ell$ obey the restrictions from the lemma, 
\[
\mathfrak{J}(0,r+1)\mathfrak{J}(p-1,\ell+1)-\mathfrak{J}(0,\ell+1)\mathfrak{J}(p-1,r+1)=\mathfrak{J}(0,p-1)\mathfrak{J}(r+1,\ell+1).
\]
After obvious cancellations in this equation one can drop the assumption
on nonvanishing sequences. The lemma readily follows. \end{proof}

\begin{lemma} \label{lem:Fn_eq_calF} Let $x=\{x_{n}\}_{n=1}^{\infty}$
be a nonvanishing complex sequence satisfying (\ref{eq:x_converg_cond}).
Then
\begin{equation}
F_{n}:=\mathfrak{F}\!\left(\{x_{k}\}_{k=n}^{\infty}\right)\!,\ n\in\mathbb{N},\label{eq:Fn_def}
\end{equation}
is the unique solution of the second order difference equation
\begin{equation}
F_{n}-F_{n+1}+x_{n}x_{n+1}F_{n+2}=0,\ n\in\mathbb{N},\label{eq:F_diff_eq}
\end{equation}
satisfying the boundary condition $\lim_{n\rightarrow\infty}F_{n}=1$.
\end{lemma}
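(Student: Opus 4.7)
The plan is to split the argument into an existence part, which is essentially a restatement of things already available, and a uniqueness part, which needs a short Casoratian argument exploiting the convergence hypothesis (\ref{eq:x_converg_cond}).

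First I would verify that $F_n$ defined by (\ref{eq:Fn_def}) does satisfy (\ref{eq:F_diff_eq}) and the boundary condition. Applying the basic recurrence (\ref{eq:F_T_recur}) to the shifted sequence $\{x_k\}_{k=n}^{\infty}$ yields
\[
F_{n}=\mathfrak{F}\!\left(\{x_{k}\}_{k=n}^{\infty}\right)=\mathfrak{F}\!\left(\{x_{k}\}_{k=n+1}^{\infty}\right)-x_{n}x_{n+1}\,\mathfrak{F}\!\left(\{x_{k}\}_{k=n+2}^{\infty}\right)=F_{n+1}-x_{n}x_{n+1}F_{n+2},
\]
which is exactly (\ref{eq:F_diff_eq}). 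The limit $\lim_{n\to\infty}F_{n}=1$ is the first statement in (\ref{eq:lim_inf_F}).

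For uniqueness, suppose $G_{n}$ is another solution of (\ref{eq:F_diff_eq}) with $\lim_{n\to\infty}G_{n}=1$. I would introduce the Casoratian $W_{n}:=F_{n}G_{n+1}-F_{n+1}G_{n}$. Using (\ref{eq:F_diff_eq}) for both $F$ and $G$ to eliminate $F_{n+2}$ and $G_{n+2}$ one obtains the multiplicative relation
\[
x_{n}x_{n+1}\,W_{n+1}=W_{n},\qquad n\in\mathbb{N},
\]
hence $W_{n}=W_{1}\prod_{k=1}^{n-1}(x_{k}x_{k+1})^{-1}$. The assumption that $x$ is nonvanishing makes this telescoping licit, and the convergence condition (\ref{eq:x_converg_cond}) forces $x_{k}x_{k+1}\to0$, so $|W_{n}|\to\infty$ unless $W_{1}=0$. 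On the other hand, since $F_{n},F_{n+1},G_{n},G_{n+1}\to1$, clearly $W_{n}\to0$, so we must have $W_{1}=0$, and consequently $W_{n}=0$ for all $n$.

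Finally, from $W_{n}=0$ and $F_{n}\to1\neq0$, the quotient $G_{n}/F_{n}$ is well defined for all sufficiently large $n$ and the vanishing Casoratian gives $G_{n+1}/F_{n+1}=G_{n}/F_{n}$, so this quotient is a constant $c$ for large $n$, and $c=\lim G_{n}/\lim F_{n}=1$. Thus $G_{n}=F_{n}$ at two consecutive large indices, and running (\ref{eq:F_diff_eq}) backward (which is legitimate since $x_{n}x_{n+1}\neq0$) propagates the equality to all $n\in\mathbb{N}$. The only step that required real work was the Casoratian computation; the rest is bookkeeping using already-established facts from the Preliminaries. I expect the conceptually delicate point to be the blow-up argument $|W_{n}|\to\infty$, which crucially uses both that $x_{n}$ is nonvanishing and that $x_{n}x_{n+1}\to0$, the latter being a consequence of summability (\ref{eq:x_converg_cond}) rather than of any stronger hypothesis.
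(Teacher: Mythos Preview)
Your proof is correct. The existence part is identical to the paper's. For uniqueness, however, you take a genuinely different route.

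The paper argues uniqueness by exhibiting an explicit second solution $\{G_n\}$ of (\ref{eq:F_diff_eq}) that tends to $\infty$: it sets $G_1=0$ and
\[
G_n=\left(\prod_{k=1}^{n-2}\frac{1}{x_k x_{k+1}}\right)\mathfrak{F}\!\left(\{x_k\}_{k=1}^{n-2}\right),\quad n\ge2,
\]
when $F_1\neq0$, with a shifted version when $F_1=0$. Since the solution space is two-dimensional and this $G_n$ diverges, any solution with limit $1$ must be $F_n$ itself; the paper also notes that $F_n$ is thereby the minimal solution in the sense of Gautschi. Your argument bypasses this construction entirely: you take an arbitrary second solution with limit $1$, compute the Casoratian recursion $x_nx_{n+1}W_{n+1}=W_n$, and pit the blow-up of $\prod(x_kx_{k+1})^{-1}$ (from $x_kx_{k+1}\to0$) against the fact that $W_n\to0$ to force $W_1=0$. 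This is shorter and avoids the case split on whether $F_1=0$. On the other hand, the paper's explicit $G_n$ is not wasted effort: it is reused immediately afterward in the Remark to derive the decay rate (\ref{eq:calF_decay}) when $\mathfrak{F}(x)=0$.

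One small comment: your parenthetical that running (\ref{eq:F_diff_eq}) backward ``is legitimate since $x_nx_{n+1}\neq0$'' is unnecessary, because solving for $F_n$ in terms of $F_{n+1}$ and $F_{n+2}$ does not require division; the nonvanishing hypothesis is really only needed for the Casoratian telescoping.
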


\begin{proof} The sequence $\{F_{n}\}$ defined in (\ref{eq:Fn_def})
fulfills all requirements, as stated in (\ref{eq:F_T_recur}) and
(\ref{eq:lim_inf_F}). It suffices to show that there exists another
solution $\{G_{n}\}$ of (\ref{eq:F_diff_eq}) such that $\lim_{n\rightarrow\infty}G_{n}=\infty$.
If $F_{1}=\mathfrak{F}(x)\neq0$ then $\{G_{n}\}$ can be defined
by $G_{1}=0$ and
\begin{equation}
G_{n}=\left(\prod_{k=1}^{n-2}\frac{1}{x_{k}x_{k+1}}\right)\mathfrak{F}\!\left(\{x_{k}\}_{k=1}^{n-2}\right),\ \ \text{for}\ n\geq2.\label{eq:Gn_def}
\end{equation}
If $F_{1}=0$ then necessarily $F_{2}\neq0$ since otherwise (\ref{eq:F_diff_eq})
would imply $F_{n}=0$ for all $n$ which is impossible. Hence in
that case one can shift the index by $1$, i.e. one can put $G_{2}=0$,
\[
G_{n}=\left(\prod_{k=2}^{n-2}\frac{1}{x_{k}x_{k+1}}\right)\mathfrak{F}\!\left(\{x_{k}\}_{k=2}^{n-2}\right),\ \ \text{for}\ n\geq3,
\]
(and $G_{1}=-x_{1}x_{2}$). In any case, $F_{n}$ is the minimal solution
of (\ref{eq:F_diff_eq}), see \cite{Gautschi}. \end{proof}

\begin{remark} If $\mathfrak{F}(x)=0$ then $\mathfrak{F}(x_{1},x_{2},\dots,x_{n})$
tends to $0$ as $n\to\infty$ quite rapidly, more precisely,
\begin{equation}
\mathfrak{F}(x_{1},x_{2},\dots,x_{n+1})=o\!\left(\prod_{k=1}^{n}x_{k}x_{k+1}\right)\!,\ \ \text{as}\ n\rightarrow\infty.\label{eq:calF_decay}
\end{equation}
In fact, if $\mathfrak{F}(x)=0$ then $F_{2}\neq0$ and the solutions
$\{F_{n}\}$ and $\{G_{n}\}$ defined in (\ref{eq:Fn_def}) and (\ref{eq:Gn_def}),
respectively, are linearly dependent, $F_{n}=F_{2}\, G_{n}$, $\forall n$.
Sending $n$ to infinity one gets
\[
1=F_{2}\,\lim_{n\rightarrow\infty}\left(\prod_{k=1}^{n}\frac{1}{x_{k}x_{k+1}}\right)\!\mathfrak{F}\!\left(\{x_{k}\}_{k=1}^{n}\right)=\lim_{n\rightarrow\infty}\left(\prod_{k=1}^{n}\frac{1}{x_{k}x_{k+1}}\right)\!\mathfrak{F}\!\left(\{x_{k}\}_{k=1}^{n}\right)\mathfrak{F}\!\left(\{x_{k}\}_{k=2}^{n+1}\right).
\]
Now, Lemma~\ref{lem:identity_FF} provides us with the identity
\[
\mathfrak{F}\!\left(\{x_{k}\}_{k=1}^{n}\right)\mathfrak{F}\!\left(\{x_{k}\}_{k=2}^{n+1}\right)-\mathfrak{F}\!\left(\{x_{k}\}_{k=1}^{n+1}\right)\mathfrak{F}\!\left(\{x_{k}\}_{k=2}^{n}\right)=\prod_{k=1}^{n}x_{k}x_{k+1},
\]
and so one arrives at the equation
\[
\lim_{n\rightarrow\infty}\left(\prod_{k=1}^{n}\frac{1}{x_{k}x_{k+1}}\right)\mathfrak{F}\!\left(\{x_{k}\}_{k=1}^{n+1}\right)\mathfrak{F}\!\left(\{x_{k}\}_{k=2}^{n}\right)=0.
\]
Since $\mathfrak{F}\!\left(\{x_{k}\}_{k=2}^{\infty}\right)\neq0$
this shows (\ref{eq:calF_decay}). \end{remark}

\section{Coulomb wave functions\label{sec:Coulomb-wave-functions}}

For $x>1$, $y\in\mathbb{R}$, put
\[
\lambda(x,y)=\frac{y}{(x-1)x},\ w(x,y)=\frac{1}{x}\,\sqrt{\frac{x^{2}+y^{2}}{4x^{2}-1}}\,,
\]
and
\[
\gamma(x,y)=\frac{\Gamma\!\left(\frac{1}{2}x\right)}{\sqrt{2x-1}\,\Gamma\!\left(\frac{1}{2}(x+1)\right)}\left|\frac{\Gamma\!\left(\frac{1}{2}(x+iy+1)\right)}{\Gamma\!\left(\frac{1}{2}(x+iy)\right)}\right|\!.
\]
Then $\gamma(x,y)\gamma(x+1,y)=w(x,y)$. For $\mu>0$, $\nu\in\mathbb{R}$,
consider the Jacobi matrix $J=J(\mu,\nu)$ of the form (\ref{eq:matrixJ}),
with
\begin{equation}
\lambda_{k}=\lambda(\mu+k,\nu),\ w_{k}=w(\mu+k,\nu),\ k=1,2,3,\ldots.\label{eq:coulomb_lbd_w}
\end{equation}
Similarly, $\gamma_{k}=\gamma(\mu+k,\nu)$. Clearly, the matrix $J(\mu,\nu)$
represents a Hermitian Hilbert-Schmidt operator in $\ell^{2}(\mathbb{N})$.
Moreover, the convergence condition (\ref{eq:assum_sum_w}) is satisfied
for any $z_{0}\in\mathbb{C}\backslash\{0\}$ such that $z_{0}\neq\lambda_{k}$,
$\forall k\in\mathbb{N}$.

Recall the definition of regular Coulomb wave functions \cite[Eq.~14.1.3]{AbramowitzStegun}
\begin{equation}
F_{L}(\eta,\rho)=2^{L}e^{-\pi\eta/2}\,\frac{|\Gamma(L+1+i\eta)|}{\Gamma(2L+2)}\,\rho^{L+1}e^{-i\rho}\,_{1}F_{1}(L+1-i\eta;2L+2;2i\rho),\label{eq:def_Coulomb_wave}
\end{equation}
valid for $L\in\mathbb{Z}_{+}$, $\eta\in\mathbb{R}$, $\rho>0$.
Let us remark that, though not obvious from its form, the values of
the regular Coulomb wave function in the indicated range are real.
But nothing prevents us to extend, by analyticity, the Coulomb wave
function to the values $L>-1$ and $\rho\in\mathbb{C}$ (assuming
that a proper branch of $\rho^{L+1}$ has been chosen).

As observed in \cite{Ikebe}, the eigenvalue equation for $J(\mu,\nu)$
may be written in the form $F_{\mu-1}(-\nu,z^{-1})=0$. Moreover,
if $z\neq0$ is an eigenvalue of $J(\mu,\nu)$ then the components
$v_{n}(z)$, $n\in\mathbb{N}$, of a corresponding eigenvector $v(z)$
are proportional to $\sqrt{2\mu+2n-1}\, F_{\mu+n-1}(-\nu,z^{-1})$.
Thus, using definition (\ref{eq:def_Coulomb_wave}), one can write

\begin{equation}
\text{spec}(J(\mu,\nu))\backslash\{0\}=\left\{ \zeta^{-1};\, e^{-i\zeta}\,_{1}F_{1}(\mu+i\nu;2\mu;2i\zeta)=0\right\} \label{eq:specJ_Cwave}
\end{equation}
and
\begin{equation}
v_{n}(\zeta^{-1})=\sqrt{2\mu+2n-1}\,\frac{|\Gamma(\mu+n+i\nu)|}{\Gamma(2\mu+2n)}\,(2\zeta)^{n-1}\, e^{-i\zeta}\,_{1}F_{1}(\mu+n+i\nu;2\mu+2n;2i\zeta).\label{eq:eigenvJ_Cwave}
\end{equation}

Here we wish to shortly reconsider this example while using our formalism.

\begin{proposition} Under the above assumptions (see (\ref{eq:coulomb_lbd_w})),
\begin{eqnarray}
 &  & \mathfrak{F}\!\left(\left\{ \frac{\gamma_{k}{}^{2}}{\lambda_{k}-\zeta^{-1}}\right\} _{k=1}^{\infty}\right)\label{eq:calF_eq_Coulomb_wave}\\
\noalign{\smallskip} &  & =\,\frac{\Gamma\!\left(\frac{1}{2}+\mu-\frac{1}{2}\sqrt{1+4\nu\zeta}\,\right)\Gamma\!\left(\frac{1}{2}+\mu+\frac{1}{2}\sqrt{1+4\nu\zeta}\,\right)}{\Gamma(\mu)\Gamma(\mu+1)}\, e^{-i\zeta}\,_{1}F_{1}(\mu+i\nu;2\mu;2i\zeta).\nonumber 
\end{eqnarray}
\end{proposition}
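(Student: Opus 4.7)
The plan is to apply Lemma~\ref{lem:Fn_eq_calF}. Define
$$F_n(\zeta):=\mathfrak{F}\!\left(\left\{\gamma_k^{\,2}/(\lambda_k-\zeta^{-1})\right\}_{k=n}^{\infty}\right),$$
so that the content of (\ref{eq:calF_eq_Coulomb_wave}) is an explicit evaluation of $F_1(\zeta)$. By Lemma~\ref{lem:Fn_eq_calF}, $F_n$ is the unique solution of
$$F_n-F_{n+1}+x_n x_{n+1}F_{n+2}=0,\qquad x_k:=\gamma_k^{\,2}/(\lambda_k-\zeta^{-1}),$$
subject to $\lim_{n\to\infty}F_n=1$. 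We will introduce a candidate sequence $A_n$ obtained by substituting $\mu\mapsto\mu+n-1$ on the right-hand side of (\ref{eq:calF_eq_Coulomb_wave}) and check that it satisfies the same recurrence and the same limit at infinity; uniqueness will then give $A_1=F_1$, which is the asserted identity.

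Explicitly, with $s:=\tfrac{1}{2}\sqrt{1+4\nu\zeta}$, set
$$A_n:=\frac{\Gamma(\mu+n-\tfrac{1}{2}-s)\,\Gamma(\mu+n-\tfrac{1}{2}+s)}{\Gamma(\mu+n-1)\,\Gamma(\mu+n)}\,e^{-i\zeta}\,{}_1F_1\!\left(\mu+n-1+i\nu;\,2\mu+2n-2;\,2i\zeta\right).$$
Clearly $A_1$ equals the right-hand side of (\ref{eq:calF_eq_Coulomb_wave}). The limit $A_n\to 1$ is easy to verify: using $\Gamma(n+a)/\Gamma(n+b)\sim n^{a-b}$, the Gamma quotient tends to $n^{(s-1/2)+(1/2-s)}=1$; and $(\mu+n-1+i\nu)_k/(2\mu+2n-2)_k\to 2^{-k}$ pointwise in $k$, while being bounded by a constant uniformly in large $n$. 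Dominated convergence yields ${}_1F_1(\mu+n-1+i\nu;\,2\mu+2n-2;\,2i\zeta)\to\sum_{k}(i\zeta)^k/k!=e^{i\zeta}$, cancelling the prefactor $e^{-i\zeta}$.

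The main obstacle is verifying $A_n-A_{n+1}+x_nx_{n+1}A_{n+2}=0$. Upon multiplication by the telescoping product $\rho_n:=\prod_{l=1}^{n-1} w_{l-1}/(\zeta^{-1}-\lambda_l)$, this becomes the Jacobi three-term relation $w_{n-1}\widetilde A_{n-1}+(\lambda_n-\zeta^{-1})\widetilde A_n+w_n\widetilde A_{n+1}=0$ for $n\ge 2$, with $\widetilde A_n:=\rho_n A_n$; compare (\ref{eq:2ndorder_diff_eq_param_z}). Up to a $\zeta$-independent factor, $\widetilde A_n$ coincides with the candidate eigenvector component $v_n(\zeta^{-1})$ from (\ref{eq:eigenvJ_Cwave}). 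Thus the required identity is exactly the three-term recurrence in the order $L$ satisfied by $F_L(-\nu,\zeta^{-1})$, which holds for every $\zeta\neq 0$, being a contiguous relation for ${}_1F_1$ that does not require $\zeta^{-1}\in\spec(J)$. It can be assembled from two of the Kummer contiguous relations of \cite[Ch.~13.4]{AbramowitzStegun} so as to shift $a\mapsto a+1$ and $c\mapsto c+2$ simultaneously; alternatively one may invoke the identity already established in~\cite{Ikebe}. With the recurrence verified, Lemma~\ref{lem:Fn_eq_calF} forces $A_n=F_n$ for all $n$, and the case $n=1$ yields (\ref{eq:calF_eq_Coulomb_wave}).
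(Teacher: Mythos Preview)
Your proposal is correct and follows essentially the same strategy as the paper: define the candidate sequence by replacing $\mu\mapsto\mu+n-1$ on the right-hand side, verify the three-term recurrence via a contiguous relation for ${}_1F_1$ (equivalently, the order recurrence for $F_L(\eta,\rho)$), check the limit $1$ at infinity, and conclude by Lemma~\ref{lem:Fn_eq_calF}. The only cosmetic differences are that the paper writes out the coefficient $x_nx_{n+1}=X(\mu+n)$ explicitly and quotes the specific identity~(\ref{eq:recurr_1F1_ab}) rather than passing through the telescoping product $\rho_n$, and it handles the limit via Stirling or the product identity~(\ref{eq:prod_k-sq_eq_Gammas}) instead of your dominated-convergence argument for the ${}_1F_1$ series.
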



\begin{proof} Observe that the convergence condition (\ref{eq:assum_sum_w})
is satisfied in this example. For $n\in\mathbb{N}$ put
\[
f_{1,n}=\mathfrak{F}\!\left(\left\{ \frac{\gamma_{k}{}^{2}}{\lambda_{k}-\zeta^{-1}}\right\} _{\! k=n}^{\!\infty}\right)\!,
\]
and let $f_{2,n}$ be equal to the RHS of (\ref{eq:calF_eq_Coulomb_wave})
where we replace $\mu$ by $\mu+n-1$. According to (\ref{eq:F_T_recur}),
the sequence $\{f_{1,n}\}$ obeys the recurrence rule
\begin{equation}
f_{1,n}-f_{1,n+1}+X(\mu+n)f_{1,n+2}=0,\ n\in\mathbb{N},\label{eq:aux_Coulomb_recurr}
\end{equation}
where
\begin{eqnarray*}
X(x) & = & \frac{w(x,\nu)^{2}}{\left(\lambda(x,\nu)-\zeta^{-1}\right)\left(\lambda(x+1,\nu)-\zeta^{-1}\right)}\\
\noalign{\smallskip} & = & \frac{(x^{2}-1)(x^{2}+\nu^{2})\zeta^{2}}{(4x^{2}-1)\left((x-1)x-\nu\zeta\right)\left(x(x+1)-\nu\zeta\right)}\,\ \text{for}\ x>1.
\end{eqnarray*}
Next one can apply the identity
\begin{equation}
\,_{1}F_{1}(a-1;b-2;z)-\frac{b^{2}-2b+(2a-b)z}{(b-2)b}\,_{1}F_{1}(a;b;z)-\frac{a(b-a)z^{2}}{\left(b^{2}-1\right)b^{2}}\,_{1}F_{1}(a+1;b+2;z)=0,\label{eq:recurr_1F1_ab}
\end{equation}
as it follows from \cite[\S13.4]{AbramowitzStegun}, to verify that
the sequence $\{f_{2,n}\}$ obeys (\ref{eq:aux_Coulomb_recurr}) as
well. Notice that, if rewritten in terms of Coulomb wave functions,
(\ref{eq:recurr_1F1_ab}) amounts to the recurrence rule \cite[Eq.~14.2.3]{AbramowitzStegun}
\[
L\sqrt{(L+1)^{2}+\eta^{2}}\, u_{L+1}-(2L+1)\!\left(\eta+\frac{L(L+1)}{\rho}\right)\! u_{L}+(L+1)\sqrt{L^{2}+\eta^{2}}\, u_{L-1}=0,
\]
where $u_{L}=F_{L}(\eta,\rho)$.

To evaluate the limit of $f_{2,n}$, as $n\to\infty$, one may notice
that
\[
\lim_{n\to\infty}\,_{1}F_{1}(a+n;b+\kappa n;z)=e^{z/\kappa}
\]
for $\kappa\neq0$, and apply the Stirling formula. Alternatively,
avoiding the Stirling formula, the limit is also obvious from the
identity \cite[Eq.~8.325(1)]{GradshteynRyzhik}
\begin{equation}
\prod_{k=0}^{\infty}\left(1+\frac{z}{(y+k)(y+k+1)}\right)=\frac{\Gamma(y)\Gamma(y+1)}{\Gamma\left(\frac{1}{2}+y-\frac{1}{2}\sqrt{1-4z}\right)\Gamma\left(\frac{1}{2}+y+\frac{1}{2}\sqrt{1-4z}\right)}\,.\label{eq:prod_k-sq_eq_Gammas}
\end{equation}
In any case, $\lim_{n\to\infty}f_{2,n}=1$ and so, in virtue of Lemma~\ref{lem:Fn_eq_calF},
$f_{1,n}=f_{2,n}$, $\forall n$. In particular, for $n=1$ one gets
(\ref{eq:calF_eq_Coulomb_wave}). \end{proof}

\begin{proof}[Proof of formulas (\ref{eq:specJ_Cwave}) and (\ref{eq:eigenvJ_Cwave})]
As recalled in Section~\ref{sec:Preliminaries} (see (\ref{eq:specJ_xi0})),
$z=\zeta^{-1}\neq0$ is an eigenvalue of $J(\mu,\nu)$ if and only
if $\xi_{0}(z)=0$ which means nothing but (\ref{eq:specJ_Cwave}).
In that case the components $\xi_{n}(z)$, $n\in\mathbb{N}$, of a
corresponding eigenvector can be chosen as described in (\ref{eq:def_xi_k}).
Note that
\[
\prod_{k=1}^{n-1}w_{k}=2^{n-1}\sqrt{(2\mu+1)(2\mu+2n-1)}\left|\frac{\Gamma(\mu+n+i\nu)}{\Gamma(\mu+1+i\nu)}\right|\frac{\Gamma(2\mu+1)}{\Gamma(2\mu+2n)}\,
\]
and that (\ref{eq:prod_k-sq_eq_Gammas}) means in fact the equality
\[
\prod_{k=1}^{\infty}\frac{1}{1-\lambda(\mu+k,\nu)\, z}=\frac{\Gamma\!\left(\frac{1}{2}+\mu-\frac{1}{2}\sqrt{1+4\nu z}\right)\Gamma\!\left(\frac{1}{2}+\mu+\frac{1}{2}\sqrt{1+4\nu z}\right)}{\Gamma(\mu)\Gamma\,(\mu+1)}\,.
\]
Using these equations and omitting a constant factor one finally arrives
at formula (\ref{eq:eigenvJ_Cwave}). \end{proof}

\section{Confluent hypergeometric functions\label{sec:Confluent-hypergeometric-fun}}

First, let us show an identity.

\begin{proposition} The equation
\begin{eqnarray}
 &  & \frac{\Gamma(x+\gamma+n)}{\Gamma(x+\gamma)}\, e^{x}\,\mathfrak{F}\!\left(\left\{ \frac{\sqrt{2x}\,\Gamma\!\left(\frac{1}{2}(\gamma-\alpha+k+1)\right)}{(x+\gamma+k-1)\,\Gamma\!\left(\frac{1}{2}(\gamma-\alpha+k)\right)}\right\} _{\! k=1}^{\! n}\right)\nonumber \\
\noalign{\smallskip} &  & =\,\frac{\Gamma(\gamma+n)}{\Gamma(\gamma)}\,\,_{1}F_{1}(\alpha;\gamma;x)\,_{1}F_{1}(\alpha-\gamma-n;1-\gamma-n;x)\label{eq:FF_eq_calF}\\
 &  & \quad-\,\frac{\Gamma(\gamma-1)\Gamma(\gamma-\alpha+n+1)}{\Gamma(\gamma-\alpha)\Gamma(\gamma+n+1)}\, x^{n+1}\,\,_{1}F_{1}(\alpha-\gamma+1;2-\gamma;x)\,_{1}F_{1}(\alpha;\gamma+n+1;x),\nonumber 
\end{eqnarray}
is valid for $\alpha,\gamma,x\in\mathbb{C}$ and $n\in\mathbb{Z}_{+}$
(if considering the both sides as meromorphic functions). \end{proposition}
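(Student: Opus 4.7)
The plan is to prove the identity by induction on $n$, writing $L_n$ for the left-hand side and $R_n$ for the right-hand side. I would show that both sequences satisfy the same three-term recurrence
\[
T_{n+1}=(x+\gamma+n)\,T_n-x(\gamma-\alpha+n)\,T_{n-1},\qquad n\geq 1,
\]
and then verify the two base cases $n=0$ and $n=1$.

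For $L_n$, the Gamma ratios inside the sequence telescope via $\Gamma(\tfrac{c+2}{2})/\Gamma(\tfrac{c}{2})=c/2$, yielding $x_k x_{k+1}=x(\gamma-\alpha+k)/[(x+\gamma+k-1)(x+\gamma+k)]$. Applying the reversed-sequence form of (\ref{eq:F_T_recur})---equivalently, (\ref{eq:gen_recurrence}) pivoted at the tail of the finite sequence---gives $\mathfrak{F}(x_1,\dots,x_{n+1})=\mathfrak{F}(x_1,\dots,x_n)-x_n x_{n+1}\mathfrak{F}(x_1,\dots,x_{n-1})$, and multiplying through by the prefactor $e^{x}\Gamma(x+\gamma+n+1)/\Gamma(x+\gamma)$ reproduces the displayed recurrence for $L_n$ exactly. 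For $R_n$, split $R_n=P_n-Q_n$ in the obvious way; the $n$-dependence of $P_n$ lies entirely in the factor $\tfrac{\Gamma(\gamma+n)}{\Gamma(\gamma)}\,_{1}F_{1}(\alpha-\gamma-n;1-\gamma-n;x)$ and that of $Q_n$ in $\tfrac{\Gamma(\gamma-\alpha+n+1)}{\Gamma(\gamma+n+1)}x^{n+1}\,_{1}F_{1}(\alpha;\gamma+n+1;x)$. The shift $n\mapsto n+1$ is a joint $(a,b)\mapsto(a-1,b-1)$ shift in the first case and a pure $b$-shift in the second. The joint shift is a one-line rewriting of Kummer's ODE $zM''+(b-z)M'-aM=0$ by the differentiation rule $M'(a,b,z)=\tfrac{a}{b}M(a+1,b+1,z)$ together with its second iterate; the pure $b$-shift is the classical contiguous relation $b(b-1)M(a,b-1,z)-b(b-1+z)M(a,b,z)+(b-a)zM(a,b+1,z)=0$. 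In each case, once the $n$-dependent Gamma prefactor is propagated, the bare contiguous relation becomes the same three-term recurrence as for $L_n$.

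It remains to verify the base cases. Since $\mathfrak{F}(\emptyset)=\mathfrak{F}(x_1)=1$ one has $L_0=e^x$ and $L_1=(x+\gamma)e^x$ immediately. The nontrivial identity is $R_0=e^x$, which I would derive from the Kummer Wronskian $y_1y_2'-y_1'y_2=(1-\gamma)x^{-\gamma}e^{x}$ for $y_1=\,_{1}F_{1}(\alpha;\gamma;x)$ and $y_2=x^{1-\gamma}\,_{1}F_{1}(\alpha-\gamma+1;2-\gamma;x)$: expanding the derivatives by means of $M'(a,b,z)=M(a,b,z)-\tfrac{b-a}{b}M(a,b+1,z)$ leaves three bilinear products, and combining two of them via the short Pochhammer computation $M(a,b,z)=M(a+1,b+1,z)+\tfrac{(a-b)z}{b(b+1)}M(a+1,b+2,z)$ collapses the expression into the two-term shape of $R_0$. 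The identity $R_1=(x+\gamma)e^x$ is obtained analogously (or by applying the same Wronskian manipulation after one step of the contiguous relation used for $R_n$). The main obstacle I anticipate is identifying the right pair of contiguous relations that makes the Kummer Wronskian collapse onto exactly the two-term form of $R_0$; once that combination is spotted, the rest of the argument is routine bookkeeping with Pochhammer symbols.
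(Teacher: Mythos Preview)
Your approach is correct and genuinely different from the paper's. The paper proceeds conceptually: it recognises that the $\mathfrak{F}$-expression arises (after a symmetrising substitution) as the solution $g_n$ of the bilateral difference equation (\ref{eq:deff_eq_confluent}) described in (\ref{eq:sols_diff_eq_inf}), expresses this solution as a linear combination $A\varphi_{n+1}+B\psi_{n+1}$ of the two standard solutions built from $\,_1F_1$ and $U$, fixes $A,B$ from the values at $n=-1,0$, and only at the end converts $U$ to the two $\,_1F_1$ pieces via (\ref{eq:rel_1F1_U}). Your route is the direct inductive one: you check that the left and the right side satisfy the same three-term recurrence in $n$ (the reversed $\mathfrak{F}$-recurrence on the left, two Kummer contiguous relations on the right) and then match two initial values. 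Both arguments feed on the same ingredients---the recurrence (\ref{eq:deff_eq_confluent}) and a Wronskian identity for Kummer solutions---but yours is more self-contained and does not invoke the machinery of Section~\ref{sec:Preliminaries}, whereas the paper's version explains structurally why the identity holds and gets the ``initial data'' almost for free from the known $\,_1F_1$/$U$ Wronskian.

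One simplification you may have missed: the second base case $R_1=(x+\gamma)e^{x}$ is unnecessary. Your recurrences for $P_n$ and $Q_n$ are valid already for $n\geq0$ (they only use the contiguous relations, not any positivity of $n$), and at $n=-1$ the two summands coincide,
\[
P_{-1}=Q_{-1}=\frac{1}{\gamma-1}\,\,_1F_1(\alpha;\gamma;x)\,_1F_1(\alpha-\gamma+1;2-\gamma;x),
\]
so $R_{-1}=0$. Since the paper's convention gives $\mathfrak{F}(\{x_k\}_{k=1}^{-1})=0$, one also has $L_{-1}=0$. Hence the single nontrivial base case is $R_0=e^{x}$, which is exactly the bilinear identity (\ref{eq:FF_eq_exp}); your derivation of it from the Wronskian of the two $\,_1F_1$ solutions of Kummer's equation is correct.
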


\begin{remark} For instance, as a particular case of (\ref{eq:FF_eq_calF})
one gets, for $n=0$,
\begin{equation}
\,_{1}F_{1}(\alpha;\gamma;x)\,_{1}F_{1}(\alpha-\gamma;1-\gamma;x)-\frac{(\gamma-\alpha)x}{\gamma(\gamma-1)}\,\,_{1}F_{1}(\alpha;\gamma+1;x)\,_{1}F_{1}(\alpha-\gamma+1;2-\gamma;x)=e^{x}.\label{eq:FF_eq_exp}
\end{equation}
\end{remark}


\begin{proof} For $\alpha$, $\gamma$ and $x$ fixed and $n\in\mathbb{Z}$,
put
\[
\varphi_{n}=\frac{1}{\Gamma(n+\gamma)}\,\,_{1}F_{1}(\alpha;n+\gamma;x),\text{ }\psi_{n}=\frac{1}{\Gamma(n+\gamma-\alpha)}\, U(\alpha,n+\gamma,x).
\]
Then $\left\{ \varphi_{n}\right\} $ and $\left\{ \psi_{n}\right\} $
obey the second-order difference equation \cite[Eqs.~13.4.2,13.4.16]{AbramowitzStegun}
\begin{equation}
(n+\gamma-\alpha)xu_{n+1}-(n+\gamma+x-1)u_{n}+u_{n-1}=0,\ \ n\in\mathbb{Z}.\label{eq:deff_eq_confluent}
\end{equation}
Note also that
\[
\text{ }(\alpha-\gamma)\,_{1}F_{1}(\alpha;\gamma+1;x)U(\alpha,\gamma,x)+\gamma\,_{1}F_{1}(\alpha;\gamma;x)U(\alpha,\gamma+1,x)=\frac{\Gamma(\gamma+1)}{\Gamma(\alpha)}\, x^{-\gamma}e^{x}
\]
(as it follows, for example, from equations 13.4.12 and 13.4.25 combined
with 13.1.22 in \cite{AbramowitzStegun}). Whence
\[
\varphi_{0}\psi_{1}-\varphi_{1}\psi_{0}=\frac{1}{\Gamma(\alpha)\Gamma(1-\alpha+\gamma)}\, x^{-\gamma}e^{x},
\]
and so the solutions $\varphi_{n}$, $\psi_{n}$ are linearly independent
except of the cases $-\alpha\in\mathbb{Z}_{+}$ and $\alpha-\gamma\in\mathbb{N}$.

The difference equation (\ref{eq:deff_eq_confluent}) can be symmetrized
using the substitution
\[
u_{n}=\frac{x^{-n}}{\Gamma(\gamma-\alpha+n)}\, v_{n}.
\]
Then $w_{n}v_{n+1}-\zeta_{n}v_{n}+w_{n-1}v_{n-1}=0$ where
\[
w_{n}=\frac{x^{-n}}{\Gamma(\gamma-\alpha+n)}\,,\ \zeta_{n}=\frac{(x+\gamma+n-1)x^{-n}}{\Gamma(\gamma-\alpha+n)}\,.
\]
For a solution of the equation $\gamma_{n}\gamma_{n+1}=w_{n}$, $\forall n$,
one can take
\[
\gamma_{n}=2^{\frac{1}{4}}x^{-\frac{n}{2}+\frac{1}{4}}\sqrt{\frac{\Gamma\!\left(\frac{1}{2}(\gamma-\alpha+n+1)\right)}{\Gamma(\gamma-\alpha+n)\Gamma\!\left(\frac{1}{2}(\gamma-\alpha+n)\right)}}\,.
\]
Referring to another solution, namely
\[
g_{n}=\frac{1}{w_{n-1}\mathcal{P}_{n-1}}\,\mathfrak{F}\!\left(\left\{ \frac{\gamma_{k}^{\,2}}{\zeta_{k}}\right\} _{k=-\infty}^{n-1}\right)\!,\ \text{with}\ n\in\mathbb{N},
\]
using otherwise the same notation as in (\ref{eq:sols_diff_eq_inf}),
one concludes that there exist constants $A$ and $B$ such that
\[
\frac{\Gamma(x+\gamma+n)}{\Gamma(\gamma-\alpha+n+1)}\, x^{-n}\,\mathfrak{F}\!\left(\!\left\{ \frac{\sqrt{2x}\,\Gamma\!\left(\frac{1}{2}(\gamma-\alpha+k+1)\right)}{(x+\gamma+k-1)\,\Gamma\!\left(\frac{1}{2}(\gamma-\alpha+k)\right)}\right\} _{\! k=1}^{\! n}\right)\!=A\varphi_{n+1}+B\psi_{n+1}
\]
for all $n\in\mathbb{Z}_{+}$. $A$ and $B$ can be determined from
the values for $n=-1,0$ (putting $\mathfrak{F}\!\left(\left\{ x_{k}\right\} _{k=1}^{-1}\right)=0$,
as dictated by the recurrence rule (\ref{eq:gen_recurrence}) provided
the admissible values are extended to $N_{1}=N_{2}=1$, $n=0$). After
some manipulations one gets
\begin{eqnarray*}
 &  & \hskip-1.5em\frac{\Gamma(\gamma-\alpha)}{\Gamma(\gamma-\alpha+n+1)}\,\,_{1}F_{1}(\alpha;\gamma;x)U(\alpha,\gamma+n+1,x)\\
 &  & \hskip-1.5em-\,\frac{\Gamma(\gamma)}{\Gamma(\gamma+n+1)}\, U(\alpha,\gamma,x)\,_{1}F_{1}(\alpha;\gamma+n+1;x)\\
 &  & \hskip-1.5em=\frac{\Gamma(\gamma)\Gamma(\gamma-\alpha)\Gamma(x+\gamma+n)}{\Gamma(\alpha)\Gamma(\gamma-\alpha+n+1)\Gamma(x+\gamma)}\, x^{-\gamma-n}e^{x}\,\mathfrak{F}\!\left(\!\left\{ \!\frac{\sqrt{2x}\,\Gamma\!\left(\frac{1}{2}(\gamma-\alpha+k+1)\right)}{(x+\gamma+k-1)\Gamma\!\left(\frac{1}{2}(\gamma-\alpha+k)\right)}\right\} _{\! k=1}^{\! n}\right)\!\!.
\end{eqnarray*}
Recall that
\begin{equation}
U(a,b,x)=\frac{\Gamma(1-b)}{\Gamma(a-b+1)}\,_{1}F_{1}(a;b;x)+\frac{\Gamma(b-1)}{\Gamma(a)}\, x^{1-b}\,_{1}F_{1}(a-b+1;2-b;x),\label{eq:rel_1F1_U}
\end{equation}
whence (\ref{eq:FF_eq_calF}). \end{proof}

\begin{remark} Let us point out two particular cases of (\ref{eq:FF_eq_calF}).
Putting $\alpha=0$ one gets the identity
\begin{equation}
\mathfrak{F}\!\left(\left\{ \frac{\sqrt{2x}\,\Gamma\!\left(\frac{1}{2}(\gamma+k+1)\right)}{(x+\gamma+k-1)\Gamma\!\left(\frac{1}{2}(\gamma+k)\right)}\right\} _{\! k=1}^{\! n}\right)=\frac{\Gamma(x+\gamma)}{\Gamma(x+\gamma+n)}\,\sum_{j=0}^{n}\frac{\Gamma(\gamma+n-j)}{\Gamma(\gamma)}\, x^{j},\label{eq:calF_alpha_0}
\end{equation}
and for $\alpha=-1$ one obtains
\begin{eqnarray}
 &  & \mathfrak{F}\!\left(\left\{ \frac{\sqrt{2x}\,\Gamma\!\left(\frac{1}{2}(\gamma+k+2)\right)}{(x+\gamma+k-1)\Gamma\!\left(\frac{1}{2}(\gamma+k+1)\right)}\right\} _{k=1}^{n}\right)\nonumber \\
\noalign{\smallskip} &  & =\,\frac{\Gamma(x+\gamma)}{\Gamma(x+\gamma+n)}\,\sum_{j=0}^{n}\frac{\Gamma(\gamma+n-j)}{\Gamma(\gamma+1)}\,(\gamma-j(n-j))\, x^{j}.\label{eq:calF_alpha_-1}
\end{eqnarray}

Let us sketch a derivation of (\ref{eq:calF_alpha_-1}), equation
(\ref{eq:calF_alpha_0}) is simpler. Substitute $-1$ for $\alpha$
and $\gamma+n$ for $\gamma$ in (\ref{eq:FF_eq_exp}), and put
\[
B_{n}=\frac{\Gamma(\gamma+n-1)}{\gamma-x+n}\, x^{-n}\,_{1}F_{1}(-\gamma-n;2-\gamma;x).
\]
Then
\[
B_{n+1}-B_{n}=\frac{\Gamma(\gamma+n+1)}{(\gamma-x+n)(\gamma-x+n+1)}\, x^{-n-1}e^{x}.
\]
Whence
\[
B_{n+1}=B_{0}+e^{x}\,\sum_{j=1}^{n+1}\frac{\Gamma(\gamma+j)}{(\gamma-x+j-1)(\gamma-x+j)}\, x^{-j}
\]
which means nothing but
\begin{eqnarray}
 &  & \Gamma(\gamma+n)(\gamma-x)\,_{1}F_{1}(-\gamma-n-1;1-\gamma-n;x)\nonumber \\
 &  & -\,\Gamma(\gamma-1)(\gamma-x+n+1)\, x^{n+1}\,_{1}F_{1}(-\gamma;2-\gamma;x)\label{eq:intermed_F_eq_sum}\\
 &  & =\,(\gamma-x+n+1)(\gamma-x)\, e^{x}\,\sum_{j=1}^{n+1}\frac{\Gamma(\gamma+j)}{(\gamma-x+j-1)(\gamma-x+j)}\, x^{n+1-j}.\nonumber 
\end{eqnarray}
Set $\alpha=-1$ in (\ref{eq:FF_eq_calF}) and notice that $\,_{1}F_{1}(-1;b;x)=1-x/b$.
After some simplifications, a combination of thus obtained identity
with (\ref{eq:intermed_F_eq_sum}) gives (\ref{eq:calF_alpha_-1}).
\end{remark}

As an application of (\ref{eq:FF_eq_calF}) consider the Jacobi matrix
operator $J(\alpha,\beta,\gamma)$ depending on parameters $\alpha$,
$\beta$, $\gamma$, with $\beta>0$, $\gamma>0$ and $\alpha+\beta>0$,
as introduced in (\ref{eq:matrixJ}) where we put
\begin{equation}
\lambda_{k}=\gamma k,\ w_{k}=\sqrt{\alpha+\beta k}\,,\ k=1,2,3,\ldots.\label{eq:confluent_J}
\end{equation}
For the sequence $\gamma_{k}$ (fulfilling $\gamma_{k}\gamma_{k+1}=w_{k}$)
one can take
\[
\gamma_{k}{}^{2}=\sqrt{2\beta}\,\Gamma\!\left(\frac{1}{2}\left(\frac{\alpha}{\beta}+k+1\right)\right)\!\bigg/\Gamma\!\left(\frac{1}{2}\left(\frac{\alpha}{\beta}+k\right)\right)\!.
\]

Regarding the diagonal of $J(\alpha,\beta,\gamma)$ as an unperturbed
part and the off-diagonal elements as a perturbation one immediately
realizes that the matrix $J(\alpha.\beta,\gamma)$ determines a unique
semibounded self-adjoint operator in $\ell^{2}(\mathbb{N})$. Moreover,
the Weyl theorem about invariance of the essential spectrum tells
us that its spectrum is discrete and simple. Our goal here is to show
that one can explicitly construct a ``characteristic'' function
of this operator in terms of confluent hypergeometric functions.

\begin{proposition}\label{prop:confluent_spec_J} The spectrum of
$J(\alpha,\beta,\gamma)$ defined in (\ref{eq:matrixJ}) and (\ref{eq:confluent_J})
coincides with the set of zeros of the function
\begin{equation}
F_{J}(\alpha,\beta,\gamma;z)=\,_{1}F_{1}\!\left(1-\frac{\alpha}{\beta}-\frac{\beta}{\gamma^{2}}-\frac{z}{\gamma};1-\frac{\beta}{\gamma^{2}}-\frac{z}{\gamma};\frac{\beta}{\gamma^{2}}\right)\!\bigg/\Gamma\!\left(1-\frac{\beta}{\gamma^{2}}-\frac{z}{\gamma}\right)\!.\label{eq:FJ_eq_1F1}
\end{equation}
Moreover, if $z$ is an eigenvalue then the components of a corresponding
eigenvector $v$ can be chosen as 
\begin{eqnarray}
 &  & \hskip-1emv_{k}\,=\,(-1)^{k}\beta^{k/2}\gamma^{-k}\,\frac{\Gamma\!\left(\frac{\alpha}{\beta}+k\right)^{1/2}}{\Gamma\!\left(1-\frac{\beta}{\gamma^{2}}-\frac{z}{\gamma}+k\right)}\,_{1}F_{1}\!\left(1-\frac{\alpha}{\beta}-\frac{\beta}{\gamma^{2}}-\frac{z}{\gamma};1-\frac{\beta}{\gamma^{2}}-\frac{z}{\gamma}+k;\frac{\beta}{\gamma^{2}}\right)\!,\nonumber \\
 &  & \hskip-1emk\in\mathbb{N}.\label{eq:f_k_eq_1F1}
\end{eqnarray}
\end{proposition}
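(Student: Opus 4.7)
The plan is to circumvent the $\xi_k$ formalism of Section~\ref{sec:Preliminaries}, whose applicability requires (\ref{eq:assum_sum_w})---a condition that fails here since $w_k^2/(\lambda_k\lambda_{k+1})\sim\beta/(\gamma^2 k)$ is not summable---and instead verify the eigenvector formula (\ref{eq:f_k_eq_1F1}) directly. Set $a=1-\alpha/\beta-\beta/\gamma^2-z/\gamma$, $c=1-\beta/\gamma^2-z/\gamma$, $\chi=\beta/\gamma^2$ (the parameters appearing in $F_J$), and extend (\ref{eq:f_k_eq_1F1}) naturally to $k=0$, obtaining $v_0=\sqrt{\Gamma(\alpha/\beta)}\,F_J(\alpha,\beta,\gamma;z)$. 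The proof then reduces to three claims: (i) $\{v_k\}_{k\geq 0}$ satisfies the formal recurrence $w_{k-1}v_{k-1}+(\lambda_k-z)v_k+w_k v_{k+1}=0$ for every $k\geq 1$ (with $w_0=1$); (ii) $(v_1,v_2,\ldots)\in\ell^2(\mathbb{N})$; (iii) the spectral conclusion follows from self-adjointness and the limit-point property.

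For (i), after substituting (\ref{eq:f_k_eq_1F1}) and using the simplification $\sqrt{\alpha+\beta(k-1)}\,\sqrt{\Gamma(\alpha/\beta+k-1)}=\sqrt{\beta\,\Gamma(\alpha/\beta+k)}$ (and its $k+1$ analogue) to cancel all square-root factors and the alternating sign, multiplying through by $\Gamma(c+k+1)/\gamma$, and rewriting $(\gamma k-z)/\gamma=(c+k)+\chi-1$, the recurrence becomes the Gauss contiguous identity
\begin{equation*}
b(b-1)\,{}_1F_1(a;b-1;\chi)-b(b-1+\chi)\,{}_1F_1(a;b;\chi)+(b-a)\chi\,{}_1F_1(a;b+1;\chi)=0
\end{equation*}
evaluated at $b=c+k$ (so that $b-a=\alpha/\beta+k$), a standard recurrence in the lower parameter available in \cite[\S13.4]{AbramowitzStegun}. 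For (ii), Stirling's formula gives $\Gamma(\alpha/\beta+k)^{1/2}/\Gamma(c+k)\sim k^{(\alpha/\beta-2c+1)/2}/\sqrt{k!}$ and ${}_1F_1(a;c+k;\chi)\to 1$ as $k\to\infty$, whence $|v_k|^2\lesssim(e\beta/(\gamma^2 k))^k$ up to polynomial factors---summable super-exponentially.

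For (iii), if $F_J(z)=0$ then $v_0=0$, so the recurrence at $k=1$ reads $(\lambda_1-z)v_1+w_1 v_2=0$, i.e.\ $(v_1,v_2,\ldots)$ is an $\ell^2$ eigenvector of $J(\alpha,\beta,\gamma)$ at $z$. Conversely, since $\sum_k 1/w_k=\sum_k 1/\sqrt{\alpha+\beta k}=\infty$, Carleman's criterion places $J$ in the limit-point case, so the space of $\ell^2$ solutions of the formal recurrence is at most one-dimensional. If $z\in\spec(J)$ then both our $v$ and the genuine eigenvector (extended by $0$ at index $0$) lie in that space, so they are proportional, forcing $v_0=0$ and hence $F_J(z)=0$; the same argument for non-real $z$, combined with self-adjointness of $J$, rules out non-real zeros of $F_J$. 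The main obstacle is the algebraic reduction in (i), which is bookkeeping-intensive due to the interplay of the $(-1)^k$ factor, the $\sqrt{\Gamma(\cdot)}$ factors, and the $\sqrt{\alpha+\beta k}$ weights; once that identification is made, (ii) is routine Stirling and (iii) is standard Weyl theory.
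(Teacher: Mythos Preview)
Your proof is correct and follows essentially the same route as the paper: both extend (\ref{eq:f_k_eq_1F1}) to $k=0$, verify the three-term recurrence via the same contiguous relation $b(b-1)\,{}_1F_1(a;b-1;x)+b(1-b-x)\,{}_1F_1(a;b;x)+(b-a)x\,{}_1F_1(a;b+1;x)=0$, establish $\ell^2$ decay by Stirling, and conclude by uniqueness of the square-summable solution. The only difference is in the last step---you invoke Carleman's criterion $\sum 1/w_k=\infty$ for the limit-point property, whereas the paper argues directly from the constancy of the Wronskian together with the decay $w_kv_k,\,w_kv_{k+1}\to0$ to show any independent solution is unbounded.
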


\begin{remark} (i)~In principle it would be sufficient to consider
the case $\gamma=1$; observe that
\[
F_{J}(\alpha,\beta,\gamma;z)=F_{J}\!\left(\frac{\alpha}{\gamma^{2}},\frac{\beta}{\gamma^{2}},1;\frac{z}{\gamma}\right)\!.
\]
Thus for $\gamma=1$ we get a simpler expression,
\[
F_{J}(\alpha,\beta,1;z)=\,_{1}F_{1}\!\left(1-\frac{\alpha}{\beta}-\beta-z;1-\beta-z;\beta\right)\!\bigg/\Gamma(1-\beta-z).
\]

(ii)~Notice that the convergence condition (\ref{eq:assum_sum_w})
is violated in this example. \end{remark}

Before the proof we consider analogous results for finite matrices.
Let $J_{n}(\alpha,\beta,\gamma)$ be the principal $n\times n$ submatrix
of $J(\alpha,\beta,\gamma)$. The characteristic polynomial $F_{J_{n}}(z)$
of $J_{n}(\alpha,\beta,\gamma)$ can be expressed in terms of confluent
hypergeometric functions, too. According to (\ref{eq:charpoly}),
\[
F_{J_{n}}(\alpha,\beta,\gamma;z)=\gamma^{n}\,\frac{\Gamma\!\left(1-\frac{z}{\gamma}+n\right)}{\Gamma\!\left(1-\frac{z}{\gamma}\right)}\,\mathfrak{F}\!\left(\left\{ \frac{\sqrt{2\beta}}{\gamma}\frac{\Gamma\!\left(\frac{1}{2}\left(\frac{\alpha}{\beta}+k+1\right)\right)}{\left(k-\frac{z}{\gamma}\right)\Gamma\!\left(\frac{1}{2}\left(\frac{\alpha}{\beta}+k\right)\right)}\right\} _{\! k=1}^{\! n}\right)\!.
\]
Applying (\ref{eq:FF_eq_calF}) one arrives at the expression
\begin{eqnarray*}
 &  & F_{J_{n}}(\alpha,\beta,\gamma;z)\\
 &  & =\,\gamma^{n}\, e^{-\frac{\beta}{\gamma^{2}}}\Bigg(\,\frac{\Gamma\!\left(n+1-\frac{\beta}{\gamma^{2}}-\frac{z}{\gamma}\right)}{\Gamma\!\left(1-\frac{\beta}{\gamma^{2}}-\frac{z}{\gamma}\right)}\,\,_{1}F_{1}\!\left(1-\frac{\alpha}{\beta}-\frac{\beta}{\gamma^{2}}-\frac{z}{\gamma};1-\frac{\beta}{\gamma^{2}}-\frac{z}{\gamma};\frac{\beta}{\gamma^{2}}\right)\\
 &  & \qquad\qquad\qquad\times\,_{1}F_{1}\!\left(-n-\frac{\alpha}{\beta};-n+\frac{\beta}{\gamma^{2}}+\frac{z}{\gamma};\frac{\beta}{\gamma^{2}}\right)\\
 &  & \qquad-\left(\frac{\beta}{\gamma^{2}}\right)^{n+1}\frac{\Gamma\!\left(n+1+\frac{\alpha}{\beta}\right)\Gamma\!\left(-\frac{\beta}{\gamma^{2}}-\frac{z}{\gamma}\right)}{\Gamma\!\left(\frac{\alpha}{\beta}\right)\Gamma\!\left(n+2-\frac{\beta}{\gamma^{2}}-\frac{z}{\gamma}\right)}\,\,_{1}F_{1}\!\left(1-\frac{\alpha}{\beta};1+\frac{\beta}{\gamma^{2}}+\frac{z}{\gamma};\frac{\beta}{\gamma^{2}}\right)\\
 &  & \qquad\qquad\qquad\times\,_{1}F_{1}\!\left(1-\frac{\alpha}{\beta}-\frac{\beta}{\gamma^{2}}-\frac{z}{\gamma};n+2-\frac{\beta}{\gamma^{2}}-\frac{z}{\gamma};\frac{\beta}{\gamma^{2}}\right)\Bigg)\!.
\end{eqnarray*}

Eigenvectors can be explicitly expressed as well. If $z$ is an eigenvalue
of $J_{n}(\alpha,\beta,\gamma)$ then formula (\ref{eq:def_xi_k_meromorph})
admits adaptation to this situation giving the expression for the
components of a corresponding eigenvector,
\[
\xi_{k}^{(n)}=(-1)^{k-1}\left(\prod_{j=1}^{k-1}w_{j}\right)\left(\prod_{j=k+1}^{n}\left(\lambda_{j}-z\right)\right)\mathfrak{F}\!\left(\left\{ \frac{\gamma_{j}{}^{2}}{\lambda_{j}-z}\right\} _{\! j=k+1}^{\! n}\right)\!,\ k=1,2,\ldots,n.
\]
Notice that $\xi_{k}^{(n)}$ makes sense also for $k=n+1$, and in
that case its value is $0$. Using (\ref{eq:FF_eq_calF}) and omitting
a redundant constant factor one arrives after some straightforward
computation at the formula for an eigenvector $v^{(n)}$ of $J_{n}(\alpha,\beta,\gamma)$:
\begin{eqnarray*}
 &  & \hskip-1.5emv_{k}^{(n)}=\,(-1)^{k}\beta^{k/2}\gamma^{-k}\,\frac{1}{\sqrt{\Gamma\left(\frac{\alpha}{\beta}+k\right)}}\Bigg(\,\frac{\Gamma\!\left(\frac{\alpha}{\beta}+k\right)\Gamma\!\left(1-\frac{\beta}{\gamma^{2}}-\frac{z}{\gamma}+n\right)}{\Gamma\!\left(1-\frac{\beta}{\gamma^{2}}-\frac{z}{\gamma}+k\right)}\\
 &  & \qquad\times\,_{1}F_{1}\!\left(1-\frac{\alpha}{\beta}-\frac{\beta}{\gamma^{2}}-\frac{z}{\gamma};1-\frac{\beta}{\gamma^{2}}-\frac{z}{\gamma}+k;\frac{\beta}{\gamma^{2}}\right)\,_{1}F_{1}\!\left(-\frac{\alpha}{\beta}-n;\frac{\beta}{\gamma^{2}}+\frac{z}{\gamma}-n;\frac{\beta}{\gamma^{2}}\right)\\
 &  & \quad-\left(\frac{\beta}{\gamma^{2}}\right)^{\! n-k+1}\,\frac{\Gamma\!\left(1+\frac{\alpha}{\beta}+n\right)\Gamma\!\left(-\frac{\beta}{\gamma^{2}}-\frac{z}{\gamma}+k\right)}{\Gamma\!\left(2-\frac{\beta}{\gamma^{2}}-\frac{z}{\gamma}+n\right)}\\
 &  & \qquad\times\,_{1}F_{1}\!\left(1-\frac{\alpha}{\beta}-k;1+\frac{\beta}{\gamma^{2}}+\frac{z}{\gamma}-k;\frac{\beta}{\gamma^{2}}\right)\\
 &  & \qquad\times\,_{1}F_{1}\!\left(1-\frac{\alpha}{\beta}-\frac{\beta}{\gamma^{2}}-\frac{z}{\gamma};2-\frac{\beta}{\gamma^{2}}-\frac{z}{\gamma}+n;\frac{\beta}{\gamma^{2}}\right)\!\Bigg)\!,\ 1\leq k\leq n.
\end{eqnarray*}

\begin{remark} Formula (\ref{eq:f_k_eq_1F1}) can be derived informally
using a limit procedure. Suppose $z$ is an eigenvalue of the infinite
Jacobi matrix $J(\alpha,\beta,\gamma)$. For $k\in\mathbb{N}$ fixed,
considering the asymptotic behavior of $v_{k}^{(n)}$ as $n\to\infty$
one expects that the leading term may give the component $v_{k}$
of an eigenvector corresponding to the eigenvalue $z$. Omitting some
constant factors one actually arrives in this way at (\ref{eq:f_k_eq_1F1}).
But having in hand the explicit expressions (\ref{eq:FJ_eq_1F1})
and (\ref{eq:f_k_eq_1F1}) it is straightforward to verify directly
that the former one represents a characteristic function while the
latter one describes an eigenvector. \end{remark}


\begin{proof}[Proof of Proposition~\ref{prop:confluent_spec_J}] Observe
first that for $k=0$ the RHS of (\ref{eq:f_k_eq_1F1}) equals, up
to a constant factor, to the announced characteristic function (\ref{eq:FJ_eq_1F1}).
If $z$ solves the equation $v_{0}=0$ then one can make use of the
identity \cite[Eq.~13.4.2]{AbramowitzStegun}
\[
b(b-1)\,_{1}F_{1}(a;b-1;x)+b(1-b-x)\,_{1}F_{1}(a;b;x)+(b-a)x\,_{1}F_{1}(a;b+1;x)=0
\]
to verify that $v\in\ell^{2}(\mathbb{N})$ actually fulfills the eigenvalue
equation (\ref{eq:eigenvalue}). Note that the Stirling formula tells
us that
\[
v_{k}=\frac{(-1)^{k}}{(2\pi)^{1/4}}\, k^{-\frac{3}{4}+\frac{\alpha}{2\beta}+\frac{\beta}{\gamma^{2}}+\frac{z}{\gamma}}\left(\frac{\beta e}{\gamma^{2}k}\right)^{\! k/2}\left(1+O\!\left(\frac{1}{k}\right)\right)\text{ }\text{as}\text{ }k\to\infty.
\]
On the other hand, whatever the complex number $z$ is, the sequence
$v_{k}$, $k\in\mathbb{N}$, solves the second-order difference equation
(\ref{eq:2ndorder_diff_eq_param_z}), and in that case it is even
true that
\[
w_{0}v_{0}+\left(\lambda_{1}-z\right)v_{1}+w_{1}v_{2}=0.
\]
Let $g_{k}$, $k\in\mathbb{N}$, be any other independent solution
of (\ref{eq:2ndorder_diff_eq_param_z}). Since the Wronskian
\[
w_{k}\left(v_{k}g_{k+1}-v_{k+1}g_{k}\right)=\text{const}\neq0
\]
does not depend on $k$, and clearly $\lim_{k\to\infty}\, w_{k}v_{k}=\lim_{k\to\infty}\, w_{k}v_{k+1}=0$,
the sequence $g_{k}$ cannot be bounded in any neighborhood of infinity.
Hence, up to a multiplier, $\left\{ v_{k}\right\} $ is the only square
summable solution of (\ref{eq:2ndorder_diff_eq_param_z}). One concludes
that $z$ is an eigenvalue of $J(\alpha,\beta,\gamma)$ if and only
if $w_{0}v_{0}=0$ (which covers also the case $\alpha=0$). \end{proof}

\begin{remark} A second independent solution of (\ref{eq:2ndorder_diff_eq_param_z})
can be found explicitly. For example, this is the sequence
\[
g_{k}=(-1)^{k}\beta^{k/2}\gamma^{-k}\,\Gamma\!\left(\frac{\alpha}{\beta}+k\right)^{\!-1/2}U\!\left(1-\frac{\alpha}{\beta}-\frac{\beta}{\gamma^{2}}-\frac{z}{\gamma},1-\frac{\beta}{\gamma^{2}}-\frac{z}{\gamma}+k,\frac{\beta}{\gamma^{2}}\right)\!,\text{ }k\in\mathbb{N},
\]
as it follows from the identity \cite[Eq.~13.4.16]{AbramowitzStegun}
\[
(b-a-1)U(a,b-1,x)+(1-b-x)U(a,b,x)+xU(a,b+1,x)=0.
\]
But using once more relation (\ref{eq:rel_1F1_U}) one may find as
a more convenient the solution

\begin{eqnarray*}
g_{k} & = & \left(\frac{\beta}{\gamma^{2}}\right)^{\!\frac{\beta}{\gamma^{2}}+\frac{z}{\gamma}-\frac{k}{2}}\frac{1}{\sqrt{\Gamma\!\left(\frac{\alpha}{\beta}+k\right)}\,\Gamma\!\left(1+\frac{\beta}{\gamma^{2}}+\frac{z}{\gamma}-k\right)}\\
 &  & \times\,\,_{1}F_{1}\!\left(1-\frac{\alpha}{\beta}-k;1+\frac{\beta}{\gamma^{2}}+\frac{z}{\gamma}-k;\frac{\beta}{\gamma^{2}}\right)\!,\ k\in\mathbb{N}.
\end{eqnarray*}
\end{remark}

\begin{remark} Let us point out that for $\alpha=0$ one gets a nontrivial
example of an unbounded Jacobi matrix operator whose spectrum is known
fully explicitly. In that case
\[
\lambda_{k}=\gamma k,\ w_{k}=\sqrt{\beta k}\,,\ k=1,2,3,\ldots,
\]
and
\[
F_{J}(0,\beta,\gamma;z)=e^{\beta/\gamma^{2}}\bigg/\Gamma\!\left(1-\frac{\beta}{\gamma^{2}}-\frac{z}{\gamma}\right)\!.
\]
Hence
\[
\text{spec}\, J(0,\beta,\gamma)=\left\{ -\frac{\beta}{\gamma}+\gamma j;\ j=1,2,3,\ldots\right\} \!.
\]
\end{remark}

\begin{remark} Finally we remark that another particular case of
interest is achieved in the formal limit $\beta\to0$. Set $\alpha=w^{2}$
for some $w>0$. Since \cite[Eq.~13.3.2]{AbramowitzStegun}
\[
\lim_{a\to\infty}\,\,_{1}F_{1}\!\left(a;b;-\frac{z}{a}\right)=z^{(1-b)/2}\,\Gamma(b)J_{b-1}(2\sqrt{z})
\]
one finds that
\[
\lim_{\beta\to0}F_{J}(w^{2},\beta,\gamma;z)=\left(\frac{w}{\gamma}\right)^{\! z/\gamma}J_{-z/\gamma}\!\left(\frac{2w}{\gamma}\right)\!.
\]
It is known for quite a long time \cite{GardZakrajsek,IkebeKikuchiFujishiro}
that actually
\[
\spec J(w^{2},0,\gamma)=\left\{ z\in\mathbb{C};\ J_{-z/\gamma}\!\left(\frac{2w}{\gamma}\right)=0\right\} \!.
\]
\end{remark}

\section{Q-Bessel functions\label{sec:Q-Bessel-functions}}

\subsection{Some properties of q-Bessel functions}

Here we aim to explore a q-analogue to the following well known property
of Bessel functions. Consider the eigenvalue problem
\[
wx_{k-1}-kx_{k}+wx_{k+1}=\nu x_{k},\text{ }k\in\mathbb{Z},
\]
for a second order difference operator acting in $\ell^{2}(\mathbb{Z})$
and depending on a parameter $w>0$. If $\nu\notin\mathbb{Z}$ then
one can take $\{J_{\nu+k}(2w)\}$ and $\{(-1)^{k}J_{-\nu-k}(2w)\}$
for two independent solutions of the formal eigenvalue equation while
for $\nu\in\mathbb{Z}$ this may be the couple $\{J_{\nu+k}(2w)\}$
and $\{Y_{\nu+k}(2w)\}$. Taking into account the asymptotic behavior
of Bessel functions for large orders (see \cite[Eqs.~9.3.1, 9.3.2]{AbramowitzStegun})
one finds that a square summable solution exists if and only if $\nu\in\mathbb{Z}$.
Then $x_{k}=J_{\nu+k}(2w)$, $k\in\mathbb{Z}$, is such a solution
and is unique up to a constant multiplier. Since
\begin{equation}
\sum_{k=-\infty}^{\infty}J_{k}(z){}^{2}=1,\label{eq:Bessel_sum2}
\end{equation}
thus obtained eigenbasis $\pmb{v}_{\nu}=\{v_{\nu,k}\}_{k=-\infty}^{\infty}$,
$\nu\in\mathbb{Z}$, with $v_{\nu,k}=J_{\nu+k}(2w)$, is even orthonormal.
One observes that the spectrum of the difference operator is stable
and equals $\mathbb{Z}$ independently of the parameter $w$.

Hereafter we assume $0<q<1$. Recall the second definition of the
q-Bessel function introduced by Jackson \cite{Jackson} (for some
basic information and references one can also consult \cite{GasperRahman}),
\[
J_{\nu}^{(2)}(x;q)=\frac{(q^{\nu+1};q){}_{\infty}}{(q;q)_{\infty}}\left(\frac{x}{2}\right)^{\nu}\,_{0}\phi_{1}\!\!\left(;q^{\nu+1};q,-\frac{q^{\nu+1}x^{2}}{4}\right)\!.
\]
Here we prefer a slight modification of the second q-Bessel function,
obtained just by some rescaling, and define 
\begin{equation}
\hskip-0.3em\mathfrak{j}_{\nu}(x;q):=q^{\left.\nu^{2}\right/4}J_{\nu}^{(2)}(q^{1/4}x;q)=q^{\nu(\nu+1)/4}\,\frac{(q^{\nu+1};q){}_{\infty}}{(q;q)_{\infty}}\,\left(\frac{x}{2}\right)^{\!\nu}\!\,_{0}\phi_{1}\!\!\left(;q^{\nu+1};q,-q^{\nu+3/2}\,\frac{x^{2}}{4}\right)\!.\label{eq:def_j_nu}
\end{equation}

With our definition we have the following property.

\begin{lemma} For every $n\in\mathbb{N}$,
\begin{equation}
\mathfrak{j}_{-n}(x;q)=(-1)^{n}\,\mathfrak{j}_{n}(x;q).\label{eq:j_-n_eq_j_n}
\end{equation}
\end{lemma}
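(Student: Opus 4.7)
The statement looks innocuous, but the difficulty is that the formula for $\mathfrak{j}_\nu(x;q)$ given in \eqref{eq:def_j_nu} is actually an indeterminate form $0\cdot\infty$ at $\nu=-n$: the prefactor $(q^{\nu+1};q)_\infty$ vanishes because $(q^{1-n};q)_\infty$ contains the factor $1-q^0$, and simultaneously the denominator $(q^{\nu+1};q)_k$ inside ${}_0\phi_1$ vanishes for every $k\ge n$. My plan is to resolve this indeterminacy by combining the two problematic quantities before taking $\nu\to-n$.

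Concretely, I would first rewrite the definition of $\mathfrak{j}_\nu$ by using the identity $(q^{\nu+1};q)_\infty\big/(q^{\nu+1};q)_k=(q^{\nu+1+k};q)_\infty$ to absorb the $q$-Pochhammer denominators from the ${}_0\phi_1$ series into the overall prefactor:
\[
\mathfrak{j}_\nu(x;q)=\frac{q^{\nu(\nu+1)/4}}{(q;q)_\infty}\Bigl(\frac{x}{2}\Bigr)^{\!\nu}\sum_{k=0}^{\infty}\frac{q^{k(k-1)}\,(q^{\nu+1+k};q)_\infty}{(q;q)_k}\Bigl(-q^{\nu+3/2}\frac{x^{2}}{4}\Bigr)^{\!k}.
\]
This representation is well defined at $\nu=-n$. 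Once I set $\nu=-n$, I observe that $(q^{k-n+1};q)_\infty=0$ for $k=0,1,\dots,n-1$, so the summation effectively starts at $k=n$.

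The rest is a bookkeeping exercise: substitute $k=n+j$, use $(q;q)_{n+j}=(q;q)_n(q^{n+1};q)_j$ together with $(q^{j+1};q)_\infty=(q;q)_\infty/(q;q)_j$ to eliminate the infinite $q$-Pochhammer in favor of finite ones, pull the factor $(-q^{3/2-n}x^2/4)^n$ out of the series, and split $q^{(n+j)(n+j-1)}=q^{n(n-1)}q^{2nj}q^{j(j-1)}$. What survives inside the sum is precisely ${}_0\phi_1(;q^{n+1};q,-q^{n+3/2}x^{2}/4)$, while the residual scalar factors $(-1)^n$, the power of $x$, and the various $q$-exponents (which I expect to collect into the desired $q^{n(n+1)/4}$) reassemble the formula for $\mathfrak{j}_n(x;q)$ multiplied by $(-1)^n$.

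The only ``hard part'' is arithmetic discipline: verifying that the $q$-exponent \mbox{$n(n-1)/4+n(n-1)-n^2+3n/2$} that accumulates from the prefactor, the pulled-out power, and the split of $q^{(n+j)(n+j-1)}$ collapses to exactly $n(n+1)/4$, so that after rewriting $1/(q;q)_n$ as $(q^{n+1};q)_\infty/(q;q)_\infty$ the expression matches \eqref{eq:def_j_nu} with $\nu=n$. There is no conceptual obstacle beyond this careful tracking of exponents.
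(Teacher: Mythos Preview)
Your proposal is correct; the arithmetic check on the $q$-exponent indeed collapses to $n(n+1)/4$ exactly as you anticipate, and the reindexing $k=n+j$ recovers the ${}_0\phi_1(;q^{n+1};q,\cdot)$ series cleanly.

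The paper's proof takes a slightly different route to resolve the same $0\cdot\infty$ indeterminacy. Rather than absorbing $(q^{\nu+1};q)_\infty$ into the series term-by-term, it isolates the single offending factor $1-q^{\nu+n}$ and computes two separate limits as $\nu\to-n$: one for $(1-q^{\nu+n})\,{}_0\phi_1(;q^{\nu+1};q,-q^{\nu+3/2}w^2)$ and one for $(q^{\nu+1};q)_\infty/(1-q^{\nu+n})$, then multiplies the results. Your approach is more algebraic---you rewrite the expression so that it is manifestly regular at $\nu=-n$ and then simply substitute, avoiding any limit argument. The paper's version is terser (two one-line limits), whereas yours is more transparent about why the first $n$ terms drop out and how the surviving series reassembles into $\mathfrak{j}_n$; both are perfectly valid.
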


\begin{proof} One can readily verify that
\[
\lim_{\nu\to-n}\,\left(1-q^{\nu+n}\right)\,_{0}\phi_{1}(;q^{\nu+1};q,-q^{\nu+3/2}w^{2})=-\frac{q^{\left.n^{2}\right/2}w^{2n}}{(q;q)_{n-1}(q;q)_{n}}\,\,_{0}\phi_{1}(;q^{n+1};q,-q^{n+3/2}w^{2})
\]
and
\[
\lim_{\nu\to-n}\,\frac{(q^{\nu+1};q){}_{\infty}}{1-q^{\nu+n}}=(-1)^{n-1}q^{-n(n-1)/2}(q;q)_{n-1}(q;q){}_{\infty}.
\]
The lemma is an immediate consequence. \end{proof}

\begin{proposition}\label{prop:qBessel_calF} For $0<q<1$, $w,\nu\in\mathbb{C}$,
$q^{-\nu}\notin q^{\mathbb{Z}_{+}}$, one has
\begin{equation}
\mathfrak{F}\!\left(\left\{ \frac{w}{q^{-(\nu+k)/2}-q^{(\nu+k)/2}}\right\} _{\! k=0}^{\!\infty}\right)=\,_{0}\phi_{1}(;q^{\nu};q,-q^{\nu+1/2}w^{2}).\label{eq:calF_eq_q-confluent}
\end{equation}
\end{proposition}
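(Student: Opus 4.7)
The plan is to invoke Lemma~\ref{lem:Fn_eq_calF}, following the same strategy as in the Coulomb wave functions section. Setting $x_k := w/\bigl(q^{-(\nu+k)/2}-q^{(\nu+k)/2}\bigr)$, the assumption $q^{-\nu}\notin q^{\mathbb{Z}_+}$ ensures that no $x_k$ has a vanishing denominator, and because $0<q<1$ one has $|x_k|=O\bigl(q^{(\nu+k)/2}\bigr)$ as $k\to\infty$, so the convergence condition~(\ref{eq:x_converg_cond}) holds geometrically. By Lemma~\ref{lem:Fn_eq_calF} (applied with an obvious shift of indices), the sequence $F_n := \mathfrak{F}\bigl(\{x_k\}_{k=n}^{\infty}\bigr)$ is the unique solution of
\[F_n-F_{n+1}+x_n x_{n+1}F_{n+2}=0\]
satisfying $\lim_{n\to\infty}F_n=1$.

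It therefore suffices to verify that $H_n := {}_0\phi_1(;q^{\nu+n};q,-q^{\nu+n+1/2}w^{2})$ satisfies the same two properties; then $H_0=F_0$ is precisely the claim~(\ref{eq:calF_eq_q-confluent}). The limit $H_n\to 1$ is immediate from the defining series, since $q^{\nu+n}\to 0$ and $-q^{\nu+n+1/2}w^{2}\to 0$ kill every summand with $k\geq 1$ and leave the constant term equal to $1$. For the recurrence, I would introduce $b:=q^{\nu+n}$ and $z:=-q^{\nu+n+1/2}w^{2}$, so that $H_n={}_0\phi_1(;b;q,z)$, $H_{n+1}={}_0\phi_1(;qb;q,qz)$ and $H_{n+2}={}_0\phi_1(;q^{2}b;q,q^{2}z)$. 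A direct calculation gives
\[x_n x_{n+1}=\frac{q^{\nu+n+1/2}w^{2}}{(1-q^{\nu+n})(1-q^{\nu+n+1})}=-\frac{z}{(1-b)(1-qb)},\]
so that the recurrence $H_n-H_{n+1}+x_n x_{n+1}H_{n+2}=0$ becomes the ${}_0\phi_1$ contiguous relation
\[(1-b)(1-qb)\bigl[{}_0\phi_1(;b;q,z)-{}_0\phi_1(;qb;q,qz)\bigr]=z\,{}_0\phi_1(;q^{2}b;q,q^{2}z).\]

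This last identity I would verify by comparing coefficients of $z^k$ on both sides. Using the elementary shifts $(b;q)_k=(1-b)(qb;q)_{k-1}$ and $(b;q)_k=(1-b)(1-qb)(q^{2}b;q)_{k-2}$ (the latter requiring $k\geq 2$, so the cases $k=0,1$ are treated separately), the $z^k$-coefficient reduces after cancellation of common factors to the trivial identity $(1-q^k b)=q^k(1-b)+(1-q^k)$. The main obstacle is therefore purely computational; the conceptual content lies entirely in identifying ${}_0\phi_1$ as the correct closed form and then matching it against $\mathfrak{F}$ via Lemma~\ref{lem:Fn_eq_calF}.
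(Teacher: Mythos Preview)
Your argument is correct and complete. In fact the paper explicitly notes that this very route---via Lemma~\ref{lem:Fn_eq_calF}---is one way to establish the identity, but then chooses a different proof. You should be aware of the distinction.

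The paper's proof does not use Lemma~\ref{lem:Fn_eq_calF} at all; instead it evaluates $\mathfrak{F}$ directly from the defining series~(\ref{eq:def_calF}). Writing $\rho_k=q^{(\nu+k)/2}/(1-q^{\nu+k})$ and using the elementary summation identity
\[
\sum_{k=0}^{\infty}\frac{q^{sk}}{(q^{\nu+k};q)_{s+1}}=\frac{1}{(1-q^{s})(q^{\nu};q)_{s}}\,,
\]
it proves by induction on $m$ a closed form for the $m$-fold nested sum
\[
\sum_{k_{1}=1}^{\infty}\sum_{k_{2}=k_{1}+2}^{\infty}\cdots\sum_{k_{m}=k_{m-1}+2}^{\infty}\rho_{k_{1}}\rho_{k_{1}+1}\cdots\rho_{k_{m}}\rho_{k_{m}+1}
=\frac{q^{m(2m+1)/2+\nu m}}{(q;q)_{m}(q^{\nu+1};q)_{m}},
\]
and then simply reads off the ${}_0\phi_1$ series term by term.

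Your approach is lighter and more uniform with the rest of the paper (it mirrors the Coulomb wave computation exactly), reducing everything to a single contiguous relation for ${}_0\phi_1$ that you verify coefficientwise. The paper's approach is heavier combinatorially but yields strictly more: it gives an explicit closed form for every individual summand in the $\mathfrak{F}$-series, not just the total. One small caveat in your write-up: Lemma~\ref{lem:Fn_eq_calF} requires the sequence to be nonvanishing, so you should remark that the case $w=0$ is trivial (both sides equal $1$) and assume $w\neq 0$ when invoking the lemma.
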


\begin{remark} If rewritten in terms of q-Bessel functions, (\ref{eq:calF_eq_q-confluent})
becomes a q-analogue of (\ref{eq:BesselJ_rel_F}). Explicitly, 
\[
\mathfrak{F}\!\left(\left\{ \frac{w}{[\nu+k]_{q}}\right\} _{\! k=1}^{\!\infty}\right)=q^{\nu/4}\,\Gamma_{q}(\nu+1)w^{-\nu}J_{\nu}^{(2)}(2q^{-1/4}(1-q)w;q)
\]
where \cite{GasperRahman}
\[
[x]_{q}=\frac{q^{x/2}-q^{-x/2}}{q^{1/2}-q^{-1/2}},\ \Gamma_{q}(x)=\frac{(q;q)_{\infty}}{(q^{x};q)_{\infty}}\,(1-q)^{1-x}.
\]
\end{remark}

\begin{lemma} For $\nu\in\mathbb{C}$, $q^{-\nu}\notin q^{\mathbb{Z}_{+}}$,
and all $s\in\mathbb{N}$,
\begin{equation}
\sum_{k=0}^{\infty}\frac{q^{sk}}{(q^{\nu+k};q)_{s+1}}=\frac{1}{\left(1-q^{s}\right)(q^{\nu};q)_{s}}\,.\label{eq:aux_q-confluent}
\end{equation}
\end{lemma}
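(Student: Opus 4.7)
The plan is to prove the identity by telescoping. Observe that the summand has the pleasant factorization
\[
(q^{\nu+k};q)_{s+1}=(1-q^{\nu+k})(q^{\nu+k+1};q)_s=(q^{\nu+k};q)_s(1-q^{\nu+k+s}),
\]
which suggests looking for an antidifference of the form $a_k=C\,q^{sk}/(q^{\nu+k};q)_s$.

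Using the two factorizations above to rewrite $1/(q^{\nu+k};q)_s$ and $1/(q^{\nu+k+1};q)_s$ over the common denominator $(q^{\nu+k};q)_{s+1}$, a direct computation gives
\[
a_k-a_{k+1}=\frac{C\,q^{sk}\!\left[(1-q^{\nu+k+s})-q^s(1-q^{\nu+k})\right]}{(q^{\nu+k};q)_{s+1}}=\frac{C(1-q^s)\,q^{sk}}{(q^{\nu+k};q)_{s+1}}.
\]
Choosing $C=1/(1-q^s)$, which is legitimate since $s\in\mathbb{N}$ and $0<q<1$, turns the summand into a perfect telescoping difference.

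It remains to sum. Since $0<q<1$ we have $q^{sk}\to0$ and $(q^{\nu+k};q)_s\to1$ as $k\to\infty$, so $a_k\to0$; the hypothesis $q^{-\nu}\notin q^{\mathbb{Z}_+}$ ensures that no denominator $(q^{\nu+k};q)_s$ vanishes, so the telescoping is valid term by term. Therefore
\[
\sum_{k=0}^{\infty}\frac{q^{sk}}{(q^{\nu+k};q)_{s+1}}=\sum_{k=0}^{\infty}(a_k-a_{k+1})=a_0=\frac{1}{(1-q^s)(q^{\nu};q)_s},
\]
which is the claimed identity. There is no real obstacle here; the only step requiring slight care is the algebraic simplification in the bracket showing that $(1-q^{\nu+k+s})-q^s(1-q^{\nu+k})=1-q^s$, after which the argument is entirely routine.
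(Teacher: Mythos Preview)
Your proof is correct. The antidifference
\[
a_k=\frac{1}{1-q^{s}}\,\frac{q^{sk}}{(q^{\nu+k};q)_{s}}
\]
does satisfy $a_k-a_{k+1}=q^{sk}/(q^{\nu+k};q)_{s+1}$, the bracket simplification $(1-q^{\nu+k+s})-q^{s}(1-q^{\nu+k})=1-q^{s}$ is right, and since $a_k\to0$ the telescoping sum converges to $a_0$, giving the stated value. The hypothesis $q^{-\nu}\notin q^{\mathbb{Z}_{+}}$ is exactly what keeps all the factors $(q^{\nu+k};q)_{s}$ and $(q^{\nu+k};q)_{s+1}$ nonzero.

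Your route differs from the paper's. The paper proceeds by induction on $s$, using the related identity
\[
\frac{q^{sk}}{(q^{\nu+k};q)_{s+1}}=\frac{q^{(s-1)k}}{q^{\nu}(1-q^{s})}\!\left(\frac{1}{(q^{\nu+k};q)_{s}}-\frac{1}{(q^{\nu+k+1};q)_{s}}\right),
\]
which for $s=1$ telescopes directly but for $s>1$ carries the extra factor $q^{(s-1)k}$ and therefore feeds back into the inductive hypothesis (after an Abel-type rearrangement). Your choice of antidifference absorbs the power of $q$ into the numerator of $a_k$ itself, so the resulting difference telescopes for every $s$ at once and no induction is needed. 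The two identities are algebraically equivalent rearrangements of the same partial-fraction step, but yours yields a shorter, induction-free proof; the paper's version has the minor advantage of making the reduction from length $s+1$ to length $s$ explicit, which is sometimes useful when one wants the intermediate sums as well.
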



\begin{proof} One can proceed by mathematical induction in $s$.
The identity
\[
\frac{q^{sk}}{(q^{\nu+k};q)_{s+1}}=\frac{q^{(s-1)k}}{q^{\nu}(1-q^{s})}\!\left(\frac{1}{(q^{\nu+k};q)_{s}}-\frac{1}{(q^{\nu+k+1};q)_{s}}\right)
\]
can be used to verify both the case $s=1$ and the induction step
$s\to s+1$. \end{proof}


\begin{proof}[Proof of Proposition~\ref{prop:qBessel_calF}] One possibility
how to prove (\ref{eq:calF_eq_q-confluent}) is based on Lemma~\ref{lem:Fn_eq_calF}.
The proof presented below relies, however, on explicit evaluation
of the involved sums. For $\nu\in\mathbb{C}$, $q^{\nu}\notin q^{\mathbb{Z}}$,
$k\in\mathbb{Z}$, put
\[
\rho_{k}=\frac{q^{(\nu+k)/2}}{1-q^{\nu+k}}\,.
\]
Then (\ref{eq:aux_q-confluent}) immediately implies that, for $n\in\mathbb{Z}$
and $s\in\mathbb{N}$,
\[
\sum_{k=n}^{\infty}q^{(s-1)(\nu+k)/2}\rho_{k}\rho_{k+1}\ldots\rho_{k+s}=\frac{q^{s(\nu+n+1)/2}}{1-q^{s}}\,\rho_{n}\rho_{n+1}\ldots\rho_{n+s-1}.
\]
This equation in turn can be used in the induction step on $m$ to
show that, for $m\in\mathbb{N}$, $n\in\mathbb{Z}$,
\begin{eqnarray*}
 &  & \sum_{k_{1}=n}^{\infty}\sum_{k_{2}=k_{1}+2}^{\infty}\ldots\sum_{k_{m}=k_{m-1}+2}^{\infty}\rho_{k_{1}}\rho_{k_{1}+1}\rho_{k_{2}}\rho_{k_{2}+1}\ldots\rho_{k_{m}}\rho_{k_{m}+1}\\
\noalign{\smallskip} &  & =\,\frac{q^{m(3m+1)/4}\, q^{m(\nu+n-1)/2}}{(q;q)_{m}}\,\rho_{n}\rho_{n+1}\ldots\rho_{n+m-1}.
\end{eqnarray*}
In particular, for $n=1$ one gets
\[
\sum_{k_{1}=1}^{\infty}\sum_{k_{2}=k_{1}+2}^{\infty}\ldots\sum_{k_{m}=k_{m-1}+2}^{\infty}\rho_{k_{1}}\rho_{k_{1}+1}\rho_{k_{2}}\rho_{k_{2}+1}\ldots\rho_{k_{m}}\rho_{k_{m}+1}=\frac{q^{m(2m+1)/2+\nu m}}{(q;q)_{m}(q^{\nu+1};q)_{m}}\,,\ m\in\mathbb{N}.
\]
Now, in order to evaluate $\mathfrak{F}\!\left(\left\{ w\rho_{k}\right\} _{k=1}^{\infty}\right)$,
it suffices to apply the very definition (\ref{eq:def_calF}). \end{proof}

The q-hypergeometric function is readily seen to satisfy the recurrence
rule
\[
\,_{0}\phi_{1}(;q^{\nu};q,z)-\,_{0}\phi_{1}(;q^{\nu+1};q,qz)-\frac{z}{\left(1-q^{\nu}\right)\left(1-q^{\nu+1}\right)}\,\,_{0}\phi_{1}(;q^{\nu+2};q,q^{2}z)=0.
\]
Consequently,
\[
w\,\mathfrak{j}_{\nu}(2w;q)-\left(q^{-(\nu+1)/2}-q^{(\nu+1)/2}\right)\mathfrak{j}_{\nu+1}(2w;q)+w\,\mathfrak{j}_{\nu+2}(2w;q)=0.
\]
This is in agreement with (\ref{eq:sols_diff_eq_inf}) if applied
to the bilateral second order difference equation 
\begin{equation}
wx_{n-1}-\left(q^{-(\nu+n)/2}-q^{(\nu+n)/2}\right)x_{n}+wx_{n+1}=0,\ n\in\mathbb{Z}.\label{eq:bilateral_j_k}
\end{equation}
Suppose $q^{\nu}\notin q^{\mathbb{Z}}$. Then the two solutions described
in (\ref{eq:sols_diff_eq_inf}) in this case give
\begin{eqnarray}
f_{n} & = & q^{-\nu(\nu+1)/4}\,\frac{(q;q)_{\infty}}{(q^{\nu+1};q){}_{\infty}}\, w^{-\nu}\,\mathfrak{j}_{\nu+n}(2w;q),\label{eq:qBessel_sol_f}\\
g_{n} & = & (-1)^{n+1}q^{-\nu(\nu+1)/4}\,\frac{(q;q)_{\infty}}{(q^{-\nu};q){}_{\infty}}\, w^{\nu}\,\mathfrak{j}_{-\nu-n}(2w;q),\ n\in\mathbb{Z}.\label{eq:qBessel_sol_g}
\end{eqnarray}
Let us show that they are generically independent. For the proof we
need the identity \cite[\S1,3]{GasperRahman}
\[
\sum_{k=0}^{\infty}\,\frac{q^{\left.k(k-1)\right/2}}{(q;q)_{k}}\, z^{k}=(-z;q)_{\infty}.
\]

\begin{lemma} For $\nu\in\mathbb{C}$, $q^{\nu}\notin q^{\mathbb{Z}}$,
the Wronskian of the solutions of (\ref{eq:bilateral_j_k}), $\{f_{n}\}$
and $\{g_{n}\}$ defined in (\ref{eq:qBessel_sol_f}) and (\ref{eq:qBessel_sol_g}),
respectively, fulfills 
\begin{equation}
\mathcal{W}(f,g)=\mathfrak{F}\!\left(\left\{ \frac{w\, q^{(\nu+k)/2}}{1-q^{\nu+k}}\right\} _{\! k=-\infty}^{\!\infty}\right)\!=(-q^{1/2}w^{2};q)_{\infty}.\label{eq:Wronskian_qBessel}
\end{equation}
\end{lemma}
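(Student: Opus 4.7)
The first equality is a direct application of (\ref{eq:Wronskian_fg}): the bilateral equation (\ref{eq:bilateral_j_k}) has constant coefficient $w_n\equiv w$ and $\zeta_n = q^{-(\nu+n)/2}-q^{(\nu+n)/2}$, so the constant choice $\gamma_n\equiv\sqrt{w}$ gives $\gamma_n^2/\zeta_n = wq^{(\nu+n)/2}/(1-q^{\nu+n})$, which is exactly the sequence appearing in the claim.

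The substance of the proof is the second equality. My plan is to apply the generalized recurrence (\ref{eq:gen_recurrence}) to the bilateral sequence at an arbitrary position $n$,
\begin{equation*}
\mathfrak{F}(\{x_k\}_{k=-\infty}^\infty) = \mathfrak{F}(\{x_k\}_{k=-\infty}^n)\mathfrak{F}(\{x_k\}_{k=n+1}^\infty) - x_nx_{n+1}\,\mathfrak{F}(\{x_k\}_{k=-\infty}^{n-1})\mathfrak{F}(\{x_k\}_{k=n+2}^\infty),
\end{equation*}
evaluate each of the four tail factors explicitly, and then pass to the limit $n\to\infty$. The right-hand tails follow immediately from (\ref{eq:calF_eq_q-confluent}) by shifting $\nu\mapsto\nu+n+1$ and $\nu\mapsto\nu+n+2$. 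For the left-hand tails I would substitute $y_l=x_{n-l}$, $l\ge 0$; a short algebraic manipulation then shows that $y_l = -wq^{(-\nu-n+l)/2}/(1-q^{-\nu-n+l})$, i.e., the same form as the sequence in (\ref{eq:calF_eq_q-confluent}) with $\nu\mapsto-\nu-n$, up to an overall sign. Because each term of $\mathfrak{F}$ is a product of an even number of entries, the global sign is irrelevant, and thus
\begin{equation*}
\mathfrak{F}(\{x_k\}_{k=-\infty}^n) = \,_0\phi_1(;q^{-\nu-n};q,-q^{-\nu-n+1/2}w^2),
\end{equation*}
together with an analogous formula for the shifted left tail.

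Since the left-hand side of the recurrence does not depend on $n$, the identity persists as $n\to\infty$. Both right-tail $_0\phi_1$ factors tend to $1$ (their parameters vanish), and $x_nx_{n+1}=O(q^n)\to 0$, so the second summand drops out. The only delicate limit is that of the left-tail factor, where the base $q^{-\nu-n}$ and the argument $-q^{-\nu-n+1/2}w^2$ both blow up in a compensating fashion. Writing
\[
(q^{-\nu-n};q)_k = (-1)^kq^{-k(\nu+n)+k(k-1)/2}(q^{\nu+n-k+1};q)_k
\]
and noting that $(q^{\nu+n-k+1};q)_k\to 1$ as $n\to\infty$, the $k$-th term of the series simplifies in the limit to $q^{k^2/2}w^{2k}/(q;q)_k$. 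Euler's identity $\sum_{k\ge 0}q^{k(k-1)/2}z^k/(q;q)_k=(-z;q)_\infty$ with $z=q^{1/2}w^2$ then gives the limit $(-q^{1/2}w^2;q)_\infty$. The interchange of limit and summation, which is the main technical point, follows from a uniform termwise domination valid for $n$ sufficiently large; the other left-tail factor converges to the same value by an identical argument. Assembling the pieces yields the asserted evaluation of the Wronskian.
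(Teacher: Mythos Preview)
Your proof is correct and rests on the same key computation as the paper's: both arguments reduce to evaluating $\lim\,{}_{0}\phi_{1}(;q^{-M};q,-q^{-M+1/2}w^{2})$ termwise via the rewriting $(q^{-M};q)_{k}=(-1)^{k}q^{-Mk+k(k-1)/2}(q^{M-k+1};q)_{k}$ and Euler's identity, arriving at $(-q^{1/2}w^{2};q)_{\infty}$. The only difference is organizational: the paper observes directly that, by (\ref{eq:calF_eq_q-confluent}) with $\nu$ shifted to $\nu-N$, the bilateral $\mathfrak{F}$ equals $\lim_{N\to\infty}{}_{0}\phi_{1}(;q^{\nu-N};q,-q^{\nu-N+1/2}w^{2})$, so no splitting via (\ref{eq:gen_recurrence}) is needed---your four-factor decomposition and the subsequent elimination of the right-tail and cross terms are extra steps that collapse to the same single limit.
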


\begin{proof} The first equality in (\ref{eq:Wronskian_qBessel})
is nothing but (\ref{eq:Wronskian_fg}). Further, in virtue of (\ref{eq:calF_eq_q-confluent}),
the second member in (\ref{eq:Wronskian_qBessel}) equals
\begin{eqnarray*}
\lim_{N\to\infty}\,_{0}\phi_{1}(;q^{\nu-N};q,-q^{\nu-N+1/2}w^{2}) & = & \!\lim_{M\to\infty}\,\sum_{k=0}^{\infty}\,\frac{q^{k(k-1)}}{(q;q)_{k}(q^{-M};q){}_{k}}\left(-q^{-M}q^{1/2}w^{2}\right)^{k}\\
 & = & \!\sum_{k=0}^{\infty}\,\frac{q^{k^{2}/2}}{(q;q)_{k}}\, w^{2k}\,=\,(-q^{1/2}w^{2};q)_{\infty}.
\end{eqnarray*}
The lemma follows. \end{proof}

At the same time, $\mathcal{W}(f,g)$ equals
\[
\frac{q^{-\nu(\nu+1)/2}\,(q;q)_{\infty}{}^{\!2}\, w}{(q^{\nu+1};q){}_{\infty}(q^{-\nu};q){}_{\infty}}\left(\mathfrak{j}_{\nu}(2w;q)\,\mathfrak{j}_{-\nu-1}(2w;q)+\mathfrak{j}_{\nu+1}(2w;q)\,\mathfrak{j}_{-\nu}(2w;q)\right).
\]
This implies the following result.

\begin{proposition} For $w\in\mathbb{C}$ one has
\begin{eqnarray}
 &  & \mathfrak{j}_{\nu}(2w;q)\,\mathfrak{j}_{-\nu-1}(2w;q)+\mathfrak{j}_{\nu+1}(2w;q)\,\mathfrak{j}_{-\nu}(2w;q)\nonumber \\
\noalign{\medskip} &  & =\,\frac{q^{\nu(\nu+1)/2}\,(q^{\nu+1};q){}_{\infty}(q^{-\nu};q){}_{\infty}\,(-q^{1/2}w^{2};q)_{\infty}}{(q;q)_{\infty}{}^{2}\, w}\label{eq:qBessel_qBessel}
\end{eqnarray}
and, rewriting (\ref{eq:qBessel_qBessel}) in terms of q-hypergeometric
functions,
\begin{eqnarray}
 &  & \,_{0}\phi_{1}(;q^{\nu+1};q,-q^{\nu+1}z)\,_{0}\phi_{1}(;q^{-\nu};q,-q^{-\nu}z)\nonumber \\
 &  & -\frac{q^{\nu}z}{\left(1-q^{\nu}\right)\left(1-q^{\nu+1}\right)}\,\,_{0}\phi_{1}(;q^{\nu+2};q,-q^{\nu+2}z)\,_{0}\phi_{1}(;q^{-\nu+1};q,-q^{-\nu+1}z)\label{eq:0phi1_0phi1}\\
\noalign{\smallskip} &  & =\,(-z;q)_{\infty}.\nonumber 
\end{eqnarray}
\end{proposition}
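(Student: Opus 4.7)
The strategy is to exploit the two formulas already in hand for the same Wronskian. From Lemma on the Wronskian of $\{f_n\},\{g_n\}$ we know that $\mathcal{W}(f,g)=(-q^{1/2}w^{2};q)_{\infty}$; on the other hand, the general Wronskian $\mathcal{W}(f,g)=w_n(f_ng_{n+1}-f_{n+1}g_n)$ is independent of $n$, so we may evaluate it at a convenient index. Taking $n=0$ and substituting the closed-form expressions (\ref{eq:qBessel_sol_f}) and (\ref{eq:qBessel_sol_g}) for $f_0,f_1,g_0,g_1$ (with $w_0=w$), a straightforward collection of prefactors yields exactly the expression for $\mathcal{W}(f,g)$ displayed just before the statement of the proposition, namely a scalar multiple of $\mathfrak{j}_\nu(2w;q)\mathfrak{j}_{-\nu-1}(2w;q)+\mathfrak{j}_{\nu+1}(2w;q)\mathfrak{j}_{-\nu}(2w;q)$. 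Equating the two expressions and solving for this combination immediately delivers (\ref{eq:qBessel_qBessel}) under the standing hypothesis $q^{\nu}\notin q^{\mathbb{Z}}$.

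Next, I would remove the hypothesis by analytic continuation. The right-hand side of (\ref{eq:qBessel_qBessel}) is entire in the variable $\nu$, and each $\mathfrak{j}_{\mu}(2w;q)$ is entire in $\mu$ thanks to the cancellation between the Pochhammer prefactor and the poles of $_0\phi_1$, so the left-hand side is entire in $\nu$ as well. At the exceptional points $\nu=m\in\mathbb{Z}$ the right-hand side vanishes because of the factor $(q^{\nu+1};q)_{\infty}(q^{-\nu};q)_{\infty}$; on the left, the symmetry relation (\ref{eq:j_-n_eq_j_n}) forces $\mathfrak{j}_m\mathfrak{j}_{-m-1}+\mathfrak{j}_{m+1}\mathfrak{j}_{-m}=0$, so both sides vanish simultaneously. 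Continuity then gives (\ref{eq:qBessel_qBessel}) for all $\nu\in\mathbb{C}$.

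For the $_0\phi_1$ form (\ref{eq:0phi1_0phi1}) I would simply substitute the definition (\ref{eq:def_j_nu}) of $\mathfrak{j}_{\nu}$ and set $z=q^{1/2}w^{2}$. The four q-Bessel factors become exactly the four $_0\phi_1$ factors on the left of (\ref{eq:0phi1_0phi1}); the overall power $q^{\nu(\nu+1)/2}$ coming from the prefactors of $\mathfrak{j}_{\nu}\mathfrak{j}_{-\nu-1}$ and of $\mathfrak{j}_{\nu+1}\mathfrak{j}_{-\nu}$ match (the latter picks up an extra $q^{1/2}$, which supplies the factor $q^\nu z$ in the second term), the Pochhammer prefactors $(q^{\nu+1};q)_{\infty}(q^{-\nu};q)_{\infty}/(q;q)_{\infty}^{2}$ cancel against their counterparts on the right of (\ref{eq:qBessel_qBessel}) after using $(q^{\nu+2};q)_{\infty}=(q^{\nu+1};q)_{\infty}/(1-q^{\nu+1})$, $(q^{-\nu+1};q)_{\infty}=(q^{-\nu};q)_{\infty}/(1-q^{-\nu})$, together with $1-q^{-\nu}=-q^{-\nu}(1-q^{\nu})$ to produce the sign and the denominator $(1-q^{\nu})(1-q^{\nu+1})$ in the second term.

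The main obstacle I foresee is purely bookkeeping: the constant $q^{\nu(\nu+1)/2}$, the powers of $w$, and the ratios of Pochhammer symbols have to line up exactly across the two terms on the left-hand side for the coefficient of $(-z;q)_{\infty}$ to simplify to $1$. A single sign slip (most likely via the identity $1-q^{-\nu}=-q^{-\nu}(1-q^{\nu})$) would destroy the cancellation, so I would carry that reduction out carefully term-by-term rather than trying to combine the two q-Bessel products before substituting.
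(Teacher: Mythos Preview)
Your proposal is correct and follows essentially the same approach as the paper: equate the closed form $\mathcal{W}(f,g)=(-q^{1/2}w^{2};q)_{\infty}$ from the Wronskian lemma with the expression obtained by substituting (\ref{eq:qBessel_sol_f}) and (\ref{eq:qBessel_sol_g}) into $w_n(f_ng_{n+1}-f_{n+1}g_n)$, exactly as the paper does in the paragraph preceding the proposition. Your added remarks on the analytic continuation in $\nu$ and on the bookkeeping for the passage to (\ref{eq:0phi1_0phi1}) are sound and simply make explicit what the paper leaves to the reader.
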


\begin{remark} Let us examine the limit $q\to1-$ applied to (\ref{eq:qBessel_qBessel})
while replacing $w$ by $(1-q)w$. One knows that \cite{GasperRahman}
\[
\lim_{q\to1-}\,\mathfrak{j}_{\nu}((1-q)z;q)=J_{\nu}(z),\ \ \lim_{q\to1-}\,(1-q)^{1-x}\,\frac{(q;q)_{\infty}}{(q^{x};q){}_{\infty}}=\Gamma(x).
\]
Thus one finds that the limiting equation coincides with the well
known identity
\[
J_{\nu}(2w)J_{-\nu-1}(2w)+J_{\nu+1}(2w)J_{-\nu}(2w)=\frac{1}{w\Gamma(\nu+1)\Gamma(-\nu)}=-\frac{\sin(\pi\nu)}{\pi w}\,.
\]
\end{remark}

It is desirable to have some basic information about the asymptotic
behavior of q-Bessel functions for large orders. It is straightforward
to see that
\begin{equation}
\mathfrak{j}_{\nu}(x;q)=q^{\nu(\nu+1)/4}\,\frac{1}{(q;q)_{\infty}}\left(\frac{x}{2}\right)^{\nu}\left(1+O\left(q^{\nu}\right)\right)\text{ }\text{as}\ \Re\nu\to+\infty.\label{eq:asympt_j_nu_infty}
\end{equation}
The asymptotic behavior at $-\infty$ is described as follows.

\begin{lemma} For $\sigma,w\in\mathbb{C}$, $q^{\sigma}\notin q^{\mathbb{Z}}$,
one has
\begin{eqnarray}
 &  & \lim_{\substack{|\nu|\to\infty\cr\noalign{\smallskip}\nu\in-\sigma-\mathbb{N}}
}\,\sin(\pi\nu)\, q^{\nu(\nu+1)/4}\, w^{-\nu}\,\mathfrak{j}_{\nu}(2w;q)\nonumber \\
 &  & \quad=\,-\sin(\pi\sigma)\, q^{-\sigma(1-\sigma)/2}\,\frac{(q^{\sigma};q){}_{\infty}(q^{1-\sigma};q){}_{\infty}\,(-q^{1/2}w^{2};q)_{\infty}}{(q;q)_{\infty}}.\label{eq:lim_j_nu_-infty}
\end{eqnarray}
\end{lemma}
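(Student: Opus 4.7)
The plan is to insert the explicit series definition of $\mathfrak{j}_\nu$ from (\ref{eq:def_j_nu}) and evaluate the limit of each factor separately as $\nu = \nu_n := -\sigma - n$ with $n\to\infty$. By (\ref{eq:def_j_nu}) one may rewrite
\[
\sin(\pi\nu)\, q^{\nu(\nu+1)/4}\, w^{-\nu}\,\mathfrak{j}_\nu(2w;q) = \sin(\pi\nu)\, q^{\nu(\nu+1)/2}\,\frac{(q^{\nu+1};q)_\infty}{(q;q)_\infty}\,\,_{0}\phi_1\!\left(;q^{\nu+1};q,-q^{\nu+3/2}w^2\right)\!,
\]
so the problem splits into three independent limit computations.

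For the sine, $\sin(\pi\nu_n) = (-1)^{n+1}\sin(\pi\sigma)$, which will furnish the overall sign $-\sin(\pi\sigma)$. For the $q$-Pochhammer, iterating the elementary identity $1 - q^{1-\sigma-j} = -q^{1-\sigma-j}(1 - q^{\sigma+j-1})$ for $j=1,\dots,n$ yields
\[
(q^{1-\sigma-n};q)_\infty = (-1)^n q^{n(1-\sigma)-n(n+1)/2}(q^\sigma;q)_n(q^{1-\sigma};q)_\infty,
\]
and a short arithmetic check shows that the total $q$-exponent $\nu_n(\nu_n+1)/2 + n(1-\sigma) - n(n+1)/2$ collapses to the $n$-independent value $-\sigma(1-\sigma)/2$. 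Consequently $q^{\nu_n(\nu_n+1)/2}(q^{\nu_n+1};q)_\infty$ converges to $(-1)^n q^{-\sigma(1-\sigma)/2}(q^\sigma;q)_\infty (q^{1-\sigma};q)_\infty$. For the $_{0}\phi_1$ factor, I would expand as a power series and apply the same factorisation to $(q^{1-\sigma-n};q)_k$; after cancellation of all $q$-powers the $k$-th term collapses to $q^{k^2/2}w^{2k}/[(q;q)_k(q^{\sigma+n-k};q)_k]$. For fixed $k$, $(q^{\sigma+n-k};q)_k \to 1$, and the Euler identity $\sum_{k\ge 0}q^{k(k-1)/2}z^k/(q;q)_k = (-z;q)_\infty$ with $z = q^{1/2}w^2$ sums the pointwise limits to $(-q^{1/2}w^2;q)_\infty$. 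Combining the three limits and noting $(-1)^{n+1}(-1)^n = -1$ reproduces exactly the right-hand side of (\ref{eq:lim_j_nu_-infty}).

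The main obstacle will be justifying the termwise passage to the limit inside the $_{0}\phi_1$ sum by dominated convergence, uniformly in $n$. Writing $(q^{\sigma+n-k};q)_k = (q^{\sigma+n-k};q)_\infty/(q^{\sigma+n};q)_\infty$, the denominator tends to $1$ uniformly, so only the inverse of the numerator needs controlling. For $k \leq n+\Re\sigma$ this is bounded away from $0$ by the convergence of $(q^\sigma;q)_\infty$ together with the hypothesis $q^\sigma\notin q^{\mathbb{Z}}$. For $k > n+\Re\sigma$ the same factorisation trick applied to $(q^{\sigma-M};q)_\infty$ with $M = k-n$ shows that $|(q^{\sigma+n-k};q)_k|^{-1}$ decays like $q^{(k-n)^2/2}$ up to polynomial-in-$M$ factors, which the Gaussian factor $q^{k^2/2}$ in the numerator comfortably absorbs, providing a summable majorant and finishing the argument.
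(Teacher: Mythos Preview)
Your argument is correct. The sine and $q$-Pochhammer computations match the paper's exactly, and your termwise treatment of the ${}_{0}\phi_{1}$ series is sound: the rewriting of the $k$-th term as $q^{k^{2}/2}w^{2k}/[(q;q)_{k}(q^{\sigma+n-k};q)_{k}]$ is correct, and your dominated-convergence sketch can be made rigorous essentially as you outline (for $k\le n$ the factor $(q^{\sigma+n-k};q)_{k}=(q^{\sigma+n-k};q)_{\infty}/(q^{\sigma+n};q)_{\infty}$ is bounded away from zero by $\inf_{m\ge0}|(q^{\sigma+m};q)_{\infty}|/\sup_{m\ge0}|(q^{\sigma+m};q)_{\infty}|>0$, while for $k>n$ the extra factor $q^{M(M+1)/2-M\Re\sigma}$ only improves the Gaussian decay up to a bounded correction).

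The route, however, differs from the paper's. For the limit of the ${}_{0}\phi_{1}$ factor the paper does \emph{not} pass to the limit term by term; instead it invokes the $\mathfrak{F}$-representation (\ref{eq:calF_eq_q-confluent}), splits the bilateral sequence at $k=0$ via the recurrence (\ref{eq:gen_recurrence}), lets $n\to\infty$ using the elementary convergence $\mathfrak{F}(x_{1},\dots,x_{n})\to\mathfrak{F}(x)$ from (\ref{eq:lim_inf_F}), and then collapses the resulting bilinear expression in ${}_{0}\phi_{1}$'s to $(-q^{1/2}w^{2};q)_{\infty}$ via the identity (\ref{eq:0phi1_0phi1}) established just before. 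Your approach is more elementary and self-contained --- it needs only Euler's product formula and a tail estimate, and in fact mirrors what the paper did in the \emph{previous} lemma when computing $\mathcal{W}(f,g)$. The paper's approach, by contrast, exploits the $\mathfrak{F}$-machinery already in place, so that the limit step is immediate and no separate majorant is required, at the price of relying on the quadratic identity (\ref{eq:0phi1_0phi1}).
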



\begin{proof} Put $\nu=-\sigma-n$ where $n\in\mathbb{N}$. Using
(\ref{eq:calF_eq_q-confluent}) and (\ref{eq:gen_recurrence}) one
can write
\begin{eqnarray*}
 &  & \,_{0}\phi_{1}(;q^{-\sigma-n};q,-q^{-\sigma-n+1/2}w^{2})\\
\noalign{\smallskip} &  & =\,\mathfrak{F}\!\left(\left\{ \frac{w}{q^{(\sigma+k)/2}-q^{-(\sigma+k)/2}}\right\} _{\! k=0}^{\! n}\right)\mathfrak{F}\!\left(\left\{ \frac{w}{q^{(\sigma-k)/2}-q^{-(\sigma-k)/2}}\right\} _{\! k=1}^{\!\infty}\right)\\
 &  & \quad+\,\frac{w^{2}}{(q^{\sigma/2}-q^{-\sigma/2})(q^{(1-\sigma)/2}-q^{-(1-\sigma)/2})}\\
 &  & \quad\ \ \times\,\mathfrak{F}\!\left(\left\{ \frac{w}{q^{(\sigma+k)/2}-q^{-(\sigma+k)/2}}\right\} _{\! k=1}^{\! n}\right)\mathfrak{F}\!\left(\left\{ \frac{w}{q^{(\sigma-k)/2}-q^{-(\sigma-k)/2}}\right\} _{\! k=2}^{\!\infty}\right)\!.
\end{eqnarray*}
Applying the limit $n\to\infty$ one obtains
\begin{eqnarray*}
 &  & \lim_{n\to\infty}\,_{0}\phi_{1}(;q^{-\sigma-n};q,-q^{-\sigma-n+1/2}w^{2})\\
\noalign{\smallskip} &  & =\,\,_{0}\phi_{1}(;q^{\sigma};q,-q^{\sigma+1/2}w^{2})\,_{0}\phi_{1}(;q^{1-\sigma};q,-q^{-\sigma+3/2}w^{2})\\
 &  & \quad+\,\frac{w^{2}}{(q^{\sigma/2}-q^{-\sigma/2})(q^{(1-\sigma)/2}-q^{-(1-\sigma)/2})}\\
 &  & \quad\ \ \times\,\,_{0}\phi_{1}(;q^{1+\sigma};q,-q^{\sigma+3/2}w^{2})\,_{0}\phi_{1}(;q^{2-\sigma};q,-q^{-\sigma+5/2}w^{2})\\
\noalign{\smallskip} &  & =\,(-q^{1/2}w^{2};q)_{\infty}.
\end{eqnarray*}
To get the last equality we have used (\ref{eq:0phi1_0phi1}). Notice
also that
\[
\lim_{n\to\infty}(-1)^{n}q^{(-\sigma-n)(-\sigma-n+1)/2}\,(q^{-\sigma-n+1};q)_{\infty}=q^{\sigma(\sigma-1)/2}\,(q^{\sigma};q)_{\infty}(q^{1-\sigma};q)_{\infty}.
\]
The limit (\ref{eq:lim_j_nu_-infty}) then readily follows. \end{proof}

Finally we establish an identity which can be viewed as a q-analogue
to (\ref{eq:Bessel_sum2}).

\begin{proposition} For $0<q<1$ and $w\in\mathbb{C}$ one has
\begin{equation}
\sum_{k=-\infty}^{\infty}q^{-k/2}\,\mathfrak{j}_{k}(2w;q){}^{2}=\mathfrak{j}_{0}(2w;q){}^{2}+\sum_{k=1}^{\infty}\left(q^{k/2}+q^{-k/2}\right)\mathfrak{j}_{k}(2w;q){}^{2}=(-q^{1/2}w^{2};q){}_{\infty}.\label{eq:sum_qBessel_sqrd}
\end{equation}
Equivalently, if rewritten in terms of q-Bessel functions,
\[
J_{0}^{(2)}(2w;q){}^{2}+\sum_{k=1}^{\infty}\left(q^{k/2}+q^{-k/2}\right)q^{\left.k^{2}\right/2}J_{k}^{(2)}(2w;q){}^{2}=(-w^{2};q){}_{\infty}.
\]
\end{proposition}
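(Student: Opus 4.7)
The plan is to verify the identity by matching the coefficients of $w^{2m}$ on both sides. Both sides are even entire functions of $w$, so it suffices to check agreement of the $w^{2m}$-coefficient for every $m\in\mathbb{Z}_{+}$. On the right-hand side, the q-binomial theorem gives immediately
\[
(-q^{1/2}w^{2};q)_{\infty}=\sum_{m=0}^{\infty}\frac{q^{m^{2}/2}}{(q;q)_{m}}\,w^{2m},
\]
so the task reduces to evaluating the $w^{2m}$-coefficient of $\sum_{k\in\mathbb{Z}}q^{-k/2}\mathfrak{j}_{k}(2w;q)^{2}$ and showing that it equals $q^{m^{2}/2}/(q;q)_{m}$.

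For the left-hand side I would first apply the reflection identity (\ref{eq:j_-n_eq_j_n}) to rewrite the bilateral sum as $\mathfrak{j}_{0}(2w;q)^{2}+\sum_{k\geq1}(q^{k/2}+q^{-k/2})\mathfrak{j}_{k}(2w;q)^{2}$, and then expand each $\mathfrak{j}_{k}(2w;q)^{2}$ as a power series in $w^{2}$ using the definition (\ref{eq:def_j_nu}) of $\mathfrak{j}_{\nu}$ together with the defining series of ${}_{0}\phi_{1}$. Since $\mathfrak{j}_{k}(2w;q)=O(w^{k})$ as $w\to0$, only indices $k\in\{0,1,\dots,m\}$ contribute to the $w^{2m}$-coefficient, which becomes a finite q-hypergeometric double sum in the internal summation variables of the two ${}_{0}\phi_{1}$-factors, with coefficients built out of $(q;q)_{k}$, $(q^{k+1};q)_{a}$, and explicit powers of $q$.

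The main technical obstacle is the q-hypergeometric summation needed to collapse this finite double sum to $q^{m^{2}/2}/(q;q)_{m}$. I would expect it to yield either to iterated applications of the q-Chu--Vandermonde or q-Pfaff--Saalsch\"{u}tz formulas, or to an induction on $m$ that exploits the three-term recurrence for $\mathfrak{j}_{k}$ used in Section~\ref{sec:Q-Bessel-functions} (before (\ref{eq:bilateral_j_k})), together with the q-Pascal rule. A more conceptual route, which I would try in parallel, is to recognize the left-hand side as a rearrangement of the bilateral $\mathfrak{F}$-value $\mathfrak{F}(\{wq^{(\nu+k)/2}/(1-q^{\nu+k})\}_{k=-\infty}^{\infty})$ that (\ref{eq:Wronskian_qBessel}) already identifies with $(-q^{1/2}w^{2};q)_{\infty}$; using (\ref{eq:calF_eq_q-confluent}) to express the one-sided $\mathfrak{F}$-values as ${}_{0}\phi_{1}$-functions, hence as $\mathfrak{j}$-functions, and combining this with Lemma~\ref{lem:identity_FF}, one should be able to reduce the proposition to an internal manipulation within the $\mathfrak{F}$-calculus of Section~\ref{sec:Preliminaries}.
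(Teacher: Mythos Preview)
Your overall strategy---expand both sides as power series in $w^{2}$ and match coefficients---is exactly what the paper does, and the reduction to a finite sum over $k\in\{0,1,\dots,m\}$ is correct. The gap is at the step you flag as ``the main technical obstacle'': you propose to square the ${}_{0}\phi_{1}$-series directly, which produces a genuine double sum, and then hope that q-Chu--Vandermonde or q-Pfaff--Saalsch\"utz will collapse it. The paper avoids this double sum altogether by invoking Rahman's product formula \cite[(1.20)]{Rahman},
\[
{}_{0}\phi_{1}(;q^{\nu+1};q,-q^{\nu+1}x)^{2}=(-x;q)_{\infty}\,{}_{3}\phi_{2}(q^{\nu+\frac12},-q^{\nu+\frac12},-q^{\nu+1};q^{\nu+1},q^{2\nu+1};q,-x),
\]
which turns each $\mathfrak{j}_{k}^{2}$ into a \emph{single} power series times the common factor $(-q^{1/2}w^{2};q)_{\infty}$. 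After dividing through by that factor, the coefficient identity to be proved becomes, for each $n\ge1$,
\[
\sum_{k=-n}^{n}(-1)^{k}\frac{q^{k(k-1)/2}(1+q^{k})}{(q;q)_{n+k}(q;q)_{n-k}}=0,
\]
which is the $m=2n$, $x=1$ instance of the q-binomial expansion $\sum_{j=0}^{m}(-1)^{j}\binom{m}{j}_{q}q^{-j(m-j)/2}x^{j}=(q^{-(m-1)/2}x;q)_{m}$. So the missing ingredient is Rahman's formula; without it you face a Cauchy-product double sum whose direct evaluation is considerably less transparent.

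Your alternative $\mathfrak{F}$-route does not work as stated: the bilateral value in (\ref{eq:Wronskian_qBessel}) is the Wronskian $\mathcal{W}(f,g)=w_{n}(f_{n}g_{n+1}-f_{n+1}g_{n})$ of the two solutions (\ref{eq:qBessel_sol_f})--(\ref{eq:qBessel_sol_g}), not a sum of squares $\sum_{k}q^{-k/2}\mathfrak{j}_{k}^{2}$. Lemma~\ref{lem:identity_FF} and the splitting rule (\ref{eq:gen_recurrence}) give bilinear relations among $\mathfrak{F}$-values, but there is no mechanism in the $\mathfrak{F}$-calculus of Section~\ref{sec:Preliminaries} that converts a Wronskian into an $\ell^{2}$-norm, so that shortcut is not available.
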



\begin{proof} In \cite[(1.20)]{Rahman} it is shown that
\[
\frac{J_{\nu}^{(2)}(2w;q){}^{2}}{(-w^{2};q){}_{\infty}}=\left(\frac{(q^{\nu+1};q){}_{\infty}}{(q;q)_{\infty}}\right)^{\!2}w^{2\nu}\,\,_{3}\phi_{2}(q^{\nu+\frac{1}{2}},-q^{\nu+\frac{1}{2}},-q^{\nu+1};q^{\nu+1},q^{2\nu+1};q,-w^{2}),
\]
and this can be rewritten as
\[
\,_{0}\phi_{1}(\text{};q^{\nu+1};q,-q^{\nu+1}x)^{2}=(-x;q)_{\infty}\,\,_{3}\phi_{2}(q^{\nu+\frac{1}{2}},-q^{\nu+\frac{1}{2}},-q^{\nu+1};q^{\nu+1},q^{2\nu+1};q,-x).
\]
Hence (\ref{eq:sum_qBessel_sqrd}) is equivalent to
\begin{eqnarray*}
 &  & \,_{3}\phi_{2}(q^{1/2},-q^{1/2},-q;q,q;q,-x)\\
\noalign{\smallskip} &  & +\,\sum_{k=1}^{\infty}\frac{\left(q^{-k/2}+q^{k/2}\right)q^{\left.k^{2}\right/2}}{(q;q)_{k}{}^{2}}\,\,_{3}\phi_{2}(q^{k+\frac{1}{2}},-q^{k+\frac{1}{2}},-q^{k+1};q^{k+1},q^{2k+1};q,-x)\, x^{k}\,=\,1.
\end{eqnarray*}
Looking at the power expansion in $x$ one gets, equivalently, a countable
system of equations, for $n=1,2,3,\ldots$,
\begin{eqnarray*}
 &  & \frac{(q^{1/2};q){}_{n}(-q^{1/2};q){}_{n}(-q;q)_{n}}{(q;q)_{n}{}^{\!3}}\\
 &  & +\,\sum_{k=1}^{n}(-1)^{k}\,\frac{\left(q^{-k/2}+q^{k/2}\right)q^{\left.k^{2}\right/2}}{(q;q)_{k}{}^{\!2}}\frac{(q^{k+1/2};q){}_{n-k}(-q^{k+1/2};q){}_{n-k}(-q^{k+1};q){}_{n-k}}{(q;q)_{n-k}(q^{k+1};q){}_{n-k}(q^{2k+1};q){}_{n-k}}\,=\,0.
\end{eqnarray*}
The equations can be brought to the form
\[
\frac{1}{(q;q)_{n}{}^{\!2}}+\sum_{k=1}^{n}(-1)^{k}\,\frac{q^{\left.k(k-1)\right/2}\left(1+q^{k}\right)}{(q;q)_{n+k}(q;q)_{n-k}}=0
\]
or, more conveniently,
\[
\sum_{j=0}^{2n}(-1)^{j}\frac{q^{-j(2n-j+1)/2}}{(q;q)_{2n-j}(q;q)_{j}}=0.
\]
This is true indeed since, for any $m\in\mathbb{Z}_{+}$,
\[
\sum_{j=0}^{m}(-1)^{j}\frac{(q;q)_{m}}{(q;q)_{m-j}(q;q)_{j}}\, q^{-j(m-j)/2}x^{j}=(q^{-(m-1)/2}x;q)_{m}=\prod_{k=0}^{m-1}\left(1-q^{-(m-1)/2+k}x\right).
\]
This concludes the proof. \end{proof}

\subsection{A bilateral second order difference equation}

We know that the sequence $u_{n}=\mathfrak{j}_{\nu+n}(2w;q)$ obeys
(\ref{eq:bilateral_j_k}). Applying the substitution $q^{-\nu-1}=z$,
$w=q^{\frac{\nu}{2}+\frac{1}{4}}\beta$, one finds that the sequence
\begin{eqnarray}
v_{n} & = & q^{-n/4}u_{n}\,=\, q^{-n/4}\,\mathfrak{j}_{\nu+n}\left(2q^{(2\nu+1)/4}\beta;q\right)\label{eq:qBessel_sol_v_n}\\
 & = & q^{\left.-\left(\nu^{2}+2\nu+2\right)\right/4}q^{(n-1)(n-2)/4}\,\frac{(q^{n}z^{-1};q){}_{\infty}}{(q;q)_{\infty}}\left(\frac{\beta}{z}\right)^{\nu+n}\,_{0}\phi_{1}(;q^{n}z^{-1};q,-q^{n}z^{-2}\beta^{2}),\nonumber 
\end{eqnarray}
fulfills
\begin{equation}
q^{(n-1)/2}\beta v_{n}+\left(q^{n}-z\right)v_{n+1}+q^{n/2}\beta v_{n+2}=0,\ n\in\mathbb{Z}.\label{eq:qBessel_bilat_eq0}
\end{equation}

\begin{remark} One can as well consider the unilateral second order
difference equation
\[
(1-z)v_{1}+\beta v_{2}=0,\ q^{(n-1)/2}\beta v_{n}+\left(q^{n}-z\right)v_{n+1}+q^{n/2}\beta v_{n+2}=0,\ n=1,2,3,\ldots.
\]
From (\ref{eq:asympt_j_nu_infty}) it can be seen that the sequence
$\{v_{n}\}$ given in (\ref{eq:qBessel_sol_v_n}) is square summable
over $\mathbb{N}$. Considering the Wronskian one also concludes that
any other linearly independent solution of (\ref{eq:qBessel_bilat_eq0})
cannot be bounded on any neighborhood of $+\infty$. Hence the sequence
$v_{n}$, $n\in\mathbb{N}$, solves the eigenvalue problem in $\ell^{2}(\mathbb{N})$
iff $v_{0}=0$, i.e. iff $\mathfrak{j}_{\nu}(2w;q)=0$. In terms of
the new parameters $\beta$, $z$ this condition becomes the characteristic
equation for an eigenvalue $z$,
\[
(z^{-1};q){}_{\infty}\,\,_{0}\phi_{1}(;z^{-1};q,-z^{-2}\beta^{2})=0.
\]
This example has already been treated in \cite[Sec.~4.1]{StampachStovicek2}.
\end{remark}

For the bilateral equation it may be more convenient shifting the
index by 1 in (\ref{eq:qBessel_bilat_eq0}). This is to say that we
are going to solve the equation
\begin{equation}
q^{(n-1)/2}\beta v_{n-1}+\left(q^{n}-z\right)v_{n}+q^{n/2}\beta v_{n+1}=0,\ n\in\mathbb{Z},\label{eq:qBessel_bilat_eq1}
\end{equation}
rather than (\ref{eq:qBessel_bilat_eq0}). Denote again by $J=J(\beta,q)$,
with $\beta\in\mathbb{R}$ and $0<q<1$, the corresponding matrix
operator in $\ell^{2}(\mathbb{Z})$. One knows, however, that $J(-\beta,q)$
and $J(\beta,q)$ are unitarily equivalent and so, if convenient,
one can consider just the values $\beta\geq0$. In equation (\ref{eq:qBessel_bilat_eq1}),
$z$ is playing the role of a spectral parameter. Using a notation
analogous to (\ref{eq:matrixJ}) (now for the bilateral case), this
means that
\begin{equation}
w_{n}=q^{n/2}\beta,\text{ }\lambda_{n}=q^{n},\text{ }\text{and}\text{ }\zeta_{n}:=z-\lambda_{n}=z-q^{n},\ n\in\mathbb{Z}.\label{eq:qBessel_w_lbd_dzt}
\end{equation}
Notice that for a sequence $\{\gamma_{n}\}$ obeying $\gamma_{n}\gamma_{n+1}=w_{n}$,
$\forall n\in\mathbb{Z}$, one can take
\[
\gamma_{2k-1}{}^{\!2}=q^{k-1},\text{ }\gamma_{2k}{}^{\!2}=q^{k}\beta^{2}.
\]

Since the sequence $\{w_{n}/(\lambda_{n}+1)\}$ is summable over $\mathbb{Z}$,
the Weyl theorem tells us that the essential spectrum of the self-adjoint
operator $J(\beta,q)$ contains just one point, namely $0$. Hence
all nonzero spectral points are eigenvalues.

\begin{proposition} For $0<q<1$ and $\beta>0$, the spectrum of
the Jacobi matrix operator $J(\beta,q)$ in $\mathbb{\ell}^{2}(\mathbb{Z})$,
as introduced above (see (\ref{eq:qBessel_w_lbd_dzt})), is pure point,
all eigenvalues are simple and 
\[
\spec_{p}J(\beta,q)=\left(-\beta^{2}q^{\mathbb{Z}_{+}}\right)\cup q^{\mathbb{Z}}.
\]
Eigenvectors $\pmb{v}_{m}^{(+)}$ corresponding to the eigenvalues
$q^{m}$, $m\in\mathbb{Z}$, can be chosen as $\pmb{v}_{m}^{(+)}=\left\{ v_{m,k}^{(+)}\right\} _{k=-\infty}^{\infty}$,
with
\[
v_{m,k}^{(+)}=q^{(m-k)/4}\,\mathfrak{j}_{-m+k}(2q^{-(2m+1)/4}\beta;q).
\]
They are normalized as follows:
\[
\left\Vert \pmb{v}_{m}^{(+)}\right\Vert {}^{2}=\sum_{k=-\infty}^{\infty}q^{-k/2}\,\mathfrak{j}_{k}(2q^{-(2m+1)/4}\beta;q)^{2}=(-q^{-m}\beta^{2};q){}_{\infty},\ \forall m\in\mathbb{Z}.
\]
Eigenvector \ensuremath{\pmb{v}_{m}^{(-)}}
 corresponding to the eigenvalues $-\beta^{2}q^{m}$, $m\in\mathbb{Z}_{+}$,
can be chosen as $\pmb{v}_{m}^{(-)}=\left\{ v_{m,k}^{(-)}\right\} _{k=-\infty}^{\infty}$,
with
\begin{eqnarray}
v_{m,k}^{(-)} & = & \frac{(-1)^{k}q^{k(k-4m-1)/4}}{(q;q)_{\infty}}\,\beta^{-k}\left(-q^{-m+k+1}\beta^{-2};q\right){}_{\infty}\nonumber \\
\noalign{\smallskip} &  & \times\,\,_{0}\phi_{1}\!\left(;-q^{-m+k+1}\beta^{-2};q,-q^{-2m+k+1}\beta^{-2}\right)\!.\label{eq:qBessel_eigenvec_minus}
\end{eqnarray}
\end{proposition}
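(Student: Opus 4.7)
The plan is to construct two Frobenius-type solutions of the bilateral recurrence (\ref{eq:qBessel_bilat_eq1}) in terms of $\mathfrak{j}$-functions and read off the spectrum from the vanishing locus of their Wronskian (formula (\ref{eq:Wronskian_qBessel})), handling the degenerate values $z \in q^{\mathbb{Z}}$ by a direct construction of the collapsed solution.

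First, under the substitution $z = q^{-\mu}$, $w = q^{(2\mu-1)/4}\beta$ equation (\ref{eq:qBessel_bilat_eq1}) becomes a rescaled form of the q-Bessel recurrence (\ref{eq:bilateral_j_k}), yielding two independent solutions $V_n^{(+)} = q^{-n/4}\mathfrak{j}_{\mu+n}(2w;q)$ and $V_n^{(-)} = q^{-n/4}(-1)^n\mathfrak{j}_{-\mu-n}(2w;q)$. By (\ref{eq:asympt_j_nu_infty}), $V^{(+)}$ is square-summable at $+\infty$ and $V^{(-)}$ at $-\infty$. Since $\spec_{\text{ess}} J(\beta,q) = \{0\}$ (by the Weyl-theorem paragraph preceding the proposition), a nonzero $z$ is an eigenvalue precisely when $V^{(+)}$ and $V^{(-)}$ are linearly dependent.

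For $z \notin q^{\mathbb{Z}}$ this linear dependence is equivalent to the vanishing of the Wronskian, which by (\ref{eq:Wronskian_qBessel}) equals, up to a non-vanishing prefactor, $(-q^{1/2}w^2;q)_\infty = (-\beta^2/z;q)_\infty$ after substituting $w^2 = q^{-1/2}\beta^2/z$; this vanishes precisely at $z = -\beta^2 q^j$ with $j \in \mathbb{Z}_+$, giving the family-$(-)$ eigenvalues. For $z = q^m$ with $m \in \mathbb{Z}$, the parameter $\mu = -m$ is an integer and (\ref{eq:j_-n_eq_j_n}) yields $V_n^{(-)} = (-1)^m V_n^{(+)}$; the surviving sequence, rescaled, is $v_{m,k}^{(+)} = q^{(m-k)/4}\mathfrak{j}_{k-m}(2q^{-(2m+1)/4}\beta;q)$, which lies in $\ell^2(\mathbb{Z})$ by inheritance of the decay of both $V^{(\pm)}$. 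Its squared norm reduces, via the shift $j = k - m$, to the left-hand side of (\ref{eq:sum_qBessel_sqrd}) evaluated at $w' = q^{-(2m+1)/4}\beta$, giving $(-q^{-m}\beta^2;q)_\infty$.

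The family-$(-)$ eigenvector (\ref{eq:qBessel_eigenvec_minus}) is obtained by inserting the series definition (\ref{eq:def_j_nu}) into $V_n^{(+)}$ and substituting $q^\mu = -q^{-m}\beta^{-2}$, $w^2 = -q^{-m-1/2}$ corresponding to $z = -\beta^2 q^m$: the ${}_0\phi_1$ and Pochhammer factors collapse to the real expressions $(-q^{-m+k+1}\beta^{-2};q)_\infty$ and ${}_0\phi_1(;-q^{-m+k+1}\beta^{-2};q,-q^{-2m+k+1}\beta^{-2})$ appearing in (\ref{eq:qBessel_eigenvec_minus}), while the remaining formally-complex prefactors $q^{\mu(\mu+1)/4}w^{\mu+n}$ recombine into the real expression $(-1)^k q^{k(k-4m-1)/4}\beta^{-k}/(q;q)_\infty$. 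Simplicity of every eigenvalue holds because at most one solution of (\ref{eq:qBessel_bilat_eq1}) is $\ell^2$ at each infinity; exhaustiveness follows since at $z = 0$ the recurrence admits no square-summable solution at $+\infty$, so $0 \in \spec_{\text{ess}} J(\beta,q)$ is not an eigenvalue.

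The main obstacle is the bookkeeping needed to extract the clean real formula (\ref{eq:qBessel_eigenvec_minus}) from the formally complex expression for $V_n^{(+)}$ when $\mu$ is non-real. In a clean write-up I would avoid analytic continuation and instead, having guessed (\ref{eq:qBessel_eigenvec_minus}) by this heuristic, verify it directly: substitute into (\ref{eq:qBessel_bilat_eq1}) at $z = -\beta^2 q^m$ and use the ${}_0\phi_1$ three-term recurrence displayed between (\ref{eq:calF_eq_q-confluent}) and (\ref{eq:bilateral_j_k}) to confirm the equation holds, then check square-summability from the super-exponentially decaying $q^{k(k-4m-1)/4}\beta^{-k}$ prefactor.
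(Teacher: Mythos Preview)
Your overall strategy---constructing the pair of $\mathfrak{j}$-solutions, reading the nonzero eigenvalues off the vanishing of their Wronskian, and treating the integer-order case separately via (\ref{eq:j_-n_eq_j_n})---is exactly the paper's approach. One minor difference: the paper computes $\mathcal{W}(v,\tilde v)$ directly from (\ref{eq:qBessel_qBessel}) and obtains a closed expression containing all three factors $(q^{\nu};q)_\infty(q^{1-\nu};q)_\infty(-q^{\nu}\beta^{2};q)_\infty$, so the positive and negative branches of the spectrum fall out simultaneously; you instead cite (\ref{eq:Wronskian_qBessel}) and assert that the conversion prefactor between $(f,g)$ and $(V^{(+)},V^{(-)})$ is nonvanishing off $q^{\mathbb{Z}}$. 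That is true, but you should say what this prefactor is---it carries $(q^{\nu+1};q)_\infty$ and $(q^{-\nu};q)_\infty$ from (\ref{eq:qBessel_sol_f}), (\ref{eq:qBessel_sol_g})---and note that for $z\notin q^{\mathbb{Z}}$ (real or negative) none of these Pochhammer symbols vanish.

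The genuine gap is the case $z=0$. Your sentence ``at $z=0$ the recurrence admits no square-summable solution at $+\infty$'' is precisely the statement that needs proof, and your earlier decay argument based on (\ref{eq:asympt_j_nu_infty}) no longer applies here. The paper handles this by exhibiting two explicit solutions of (\ref{eq:qBessel_bilat_eq1}) at $z=0$,
\[
v_{\pm,n}=i^{\pm n}q^{-n/4}\,_{1}\phi_{1}\!\bigl(0;-q^{1/2};q^{1/2},\pm iq^{(2n+3)/4}/\beta\bigr),
\]
checking the asymptotics $v_{\pm,n}=i^{\pm n}q^{-n/4}\bigl(1+O(q^{n/2})\bigr)$ as $n\to+\infty$, computing $\mathcal{W}(v_{+},v_{-})=-2iq^{-1/4}\beta\neq 0$, and concluding that no nontrivial linear combination is bounded near $+\infty$. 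You need either this construction or an equivalent rigorous asymptotic analysis; the heuristic that for large $n$ the recurrence degenerates to $v_{n+1}\approx -q^{-1/2}v_{n-1}$ correctly suggests the $q^{-n/4}$ growth but does not rule out a subdominant $\ell^{2}$ solution without further work.
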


\begin{remark} An expression for the norms of vectors $\pmb{v}_{m}^{(-)}$
can be found, too, 
\begin{eqnarray*}
\left\Vert \pmb{v}_{m}^{(-)}\right\Vert {}^{2} & = & (-1)^{m}q^{-m(3m+1)/2}\,\frac{\left(-q\beta^{-2};q\right){}_{\infty}\left(-q^{-m}\beta^{-2};q\right){}_{\infty}\left(-q^{m+1}\beta^{2};q\right){}_{\infty}}{\left(-q\beta^{2};q\right){}_{\infty}\left(q^{m+1};q\right){}_{\infty}}\\
\noalign{\smallskip} &  & \times\,\frac{\,_{0}\phi_{1}\left(\text{};-q\beta^{-2};q,-q^{-m+1}\beta^{-2}\right)}{\,_{0}\phi_{1}\left(\text{};-q\beta^{2};q,-q^{-m+1}\beta^{2}\right)}\,,\ \forall m\in\mathbb{Z}_{+}.
\end{eqnarray*}
But the formula is rather cumbersome and its derivation somewhat lengthy
and this is why we did not include it in the proposition and omit
its proof. \end{remark}

\begin{proof} We use the substitution $z=q^{-\nu}$ where $\nu$
is in general complex. The right hand sides in (\ref{eq:sols_diff_eq_inf})
can be evaluated using (\ref{eq:calF_eq_q-confluent}) and (\ref{eq:def_j_nu}).
Applying some easy simplifications one gets two solutions of (\ref{eq:qBessel_bilat_eq1}):
\[
v_{n}=q^{-(\nu+n)/4}\,\mathfrak{j}_{\nu+n}(2q^{(2\nu-1)/4}\beta;q),\ \tilde{v}_{n}=(-1)^{n}q^{-(\nu+n)/4}\,\mathfrak{j}_{-\nu-n}(2q^{(2\nu-1)/4}\beta;q),\text{ }n\in\mathbb{Z}.
\]

One can argue that in the bilateral case, too, all eigenvalues of
$J(\beta,q)$ are simple. In fact, the solution $\{v_{n}\}$ asymptotically
behaves as
\[
v_{n}=\frac{1}{(q;q)_{\infty}}\, q^{\frac{1}{4}\left(n^{2}+(4\nu-1)n+(3\nu-1)\nu\right)}\beta^{\nu+n}\left(1+O\left(q^{n}\right)\right)\text{ }\text{as}\ n\to+\infty.
\]
For any other independent solution $\{y_{n}\}$ of (\ref{eq:qBessel_bilat_eq1}),
$q^{n/2}\left(y_{n}v_{n+1}-y_{n+1}v_{n}\right)$ is a nonzero constant.
Obviously, such a sequence $\left\{ y_{n}\right\} $ cannot be bounded
on any neighborhood of $+\infty$. A similar argument applies to the
solution $\{\tilde{v}_{n}\}$ for $n$ large but negative. In particular,
one concludes that $z=q^{-\nu}$ is an eigenvalue of $J(\beta,q)$
if and only if $\{v_{n}\}$ and $\{\tilde{v}_{n}\}$ are linearly
dependent.

Using (\ref{eq:qBessel_qBessel}) one can derive a formula for the
Wronskian,
\begin{eqnarray*}
\mathcal{W}\left(v,\tilde{v}\right) & = & q^{k/2}\beta\left(v_{k}\tilde{v}_{k+1}-v_{k+1}\tilde{v}_{k}\right)\\
 & = & (-1)^{k+1}\beta q^{-(2\nu+1)/4}\big(\mathfrak{j}_{\nu+k}(2q^{(2\nu-1)/4}\beta;q)\mathfrak{j}_{-\nu-k-1}(2q^{(2\nu-1)/4}\beta;q)\\
 &  & \qquad\qquad\qquad\qquad\quad+\,\mathfrak{j}_{\nu+k+1}(2q^{(2\nu-1)/4}\beta;q)\mathfrak{j}_{-\nu-k}(2q^{(2\nu-1)/4}\beta;q)\big)\\
\noalign{\smallskip} & = & \frac{q^{\nu(\nu-3)/2}\left(q^{\nu};q\right){}_{\infty}\left(q^{1-\nu};q\right){}_{\infty}\left(-q^{\nu}\beta^{2};q\right){}_{\infty}}{(q;q)_{\infty}{}^{\!2}}\,.
\end{eqnarray*}
Thus $z$ is an eigenvalue if and only if either $\left(z^{-1};q\right){}_{\infty}\left(qz;q\right){}_{\infty}=0$
or $\left(-z^{-1}\beta^{2};q\right){}_{\infty}=0$. In the former
case $z\in q^{\mathbb{Z}}$, in the latter case $-z\in\beta^{2}q^{\mathbb{Z}_{+}}$.

Thus in the case of positive eigenvalues one can put $\nu=-m$, with
$m\in\mathbb{Z}$. With this choice, $\{v_{k}\}$ coincides with $\{v_{m,k}^{(+)}\}$.
Notice that then the linear dependence of the sequences $\{v_{k}\}$
and $\{\tilde{v}_{k}\}$ is also obvious from (\ref{eq:j_-n_eq_j_n}).
Normalization of the eigenvectors $\pmb{v}_{m}^{(+)}$ is a consequence
of (\ref{eq:sum_qBessel_sqrd}).

As far as the negative spectrum is concerned, one can put, for example,
$\tau=-(i\pi+\log\beta^{2})/\log q$ and $\nu=\tau-m$, $m\in\mathbb{Z}_{+}$.
Then the sequence
\[
v_{k}=q^{-(\tau-m+k)/4}\,\mathfrak{j}_{\tau-m+k}(-2iq^{-(2m+1)/4};q),\ k\in\mathbb{Z},
\]
represents an eigenvector corresponding to the eigenvalue $-\beta^{2}q^{m}$.
But it is readily seen to be proportional to the RHS of (\ref{eq:qBessel_eigenvec_minus})
whose advantage is to be manifestly real.

Finally let is show that $0$ can never be an eigenvalue of $J(\beta,q)$.
We still assume $\beta>0$. For $z=0$, one can find two mutually
complex conjugate solutions of (\ref{eq:qBessel_bilat_eq1}) explicitly.
Let us call them $v_{\pm,n}$, $n\in\mathbb{Z}$, where
\[
v_{\pm,n}=i^{\pm n}q^{-n/4}\,_{1}\phi_{1}\!\!\left(0;-q^{1/2};q^{1/2},\pm\frac{iq^{(2n+3)/4}}{\beta}\right)=i^{\pm n}q^{-n/4}\sum_{k=0}^{\infty}\frac{q^{k(k+2)/4}}{(q;q)_{k}}\left(\mp\frac{iq^{n/2}}{\beta}\right)^{\! k}\!.
\]
Clearly,
\[
v_{\pm,n}=i^{\pm n}q^{-n/4}\left(1+O\left(q^{n/2}\right)\right)\text{ }\text{as}\ n\to+\infty.
\]
Using the asymptotic expansion one can evaluate the Wronskian getting
\[
\mathcal{W}\left(v_{+},v_{-}\right)=q^{n/2}\beta\left(v_{+,n}v_{-,n+1}-v_{+,n+1}v_{-,n}\right)=-2iq^{-1/4}\beta.
\]
Hence the two solutions are linearly independent. It is also obvious
from the asymptotic expansion that no nontrivial linear combination
of these solutions can be square summable. Hence \ensuremath{0}
 cannot be an eigenvalue of \ensuremath{J(\beta,q)}
 whatever \ensuremath{\beta}
 is, and this concludes the proof. \end{proof}

So one observes that the positive part of the spectrum of $J(\beta,q)$
is stable and does not depend on the parameter $\beta$. This behavior
is very similar to what one knows from the non-deformed case. On the
other hand, there is an essentially new feature in the q-case when
a negative part of the spectrum emerges for $\beta\neq0$, and it
is even infinite-dimensional though it shrinks to zero with the rate
$\beta^{2}$ as $\beta$ tends to $0$.

\section{Q-confluent hypergeometric functions\label{sec:Q-confluent-hypergeometric-fun}}

In this section we deal with the q-confluent hypergeometric function
\[
\,_{1}\phi_{1}\left(a;b;q,q^{2}z\right)=\sum_{k=0}^{\infty}(-1)^{k}q^{k(k-1)/2}\,\frac{(a;q)_{k}}{(b;q)_{k}(q;q)_{k}}\, z^{k}\,.
\]
It can readily be checked to obey the recurrence rules
\begin{eqnarray}
 &  & \hskip-1.5em-\frac{q^{\alpha+\gamma}\left(1-q^{\gamma-\alpha+1}\right)}{\left(1-q^{\gamma}\right)\left(1-q^{\gamma+1}\right)}\, z\,_{1}\phi_{1}(q^{\alpha};q^{\gamma+2};q,q^{\gamma+2}z)-\left(1-\frac{q^{\gamma}\, z}{1-q^{\gamma}}\right)\,_{1}\phi_{1}(q^{\alpha};q^{\gamma+1};q,q^{\gamma+1}z)\nonumber \\
\noalign{\smallskip} &  & \hskip-1.5em+\,\,_{1}\phi_{1}(q^{\alpha};q^{\gamma};q,q^{\gamma}z)=0\label{eq:q-confluent_recurr1}
\end{eqnarray}
and
\begin{eqnarray*}
 &  & \,_{1}\phi_{1}(q^{\alpha-\gamma+1};q^{2-\gamma};q,z)+\frac{q\left(q-q^{\gamma}-q^{1-\gamma}+1\right)}{q^{\gamma}-q^{\alpha}}\,_{1}\phi_{1}(q^{\alpha-\gamma-1};q^{-\gamma};q,z)\\
\noalign{\smallskip} &  & -\left(\frac{q\left(q-q^{\gamma}-q^{1-\gamma}+1\right)}{q^{\gamma}-q^{\alpha}}+\frac{q-q^{\gamma}}{q^{\gamma}-q^{\alpha}}\, z\right)\,_{1}\phi_{1}(q^{\alpha-\gamma};q^{1-\gamma};q,z)=0.
\end{eqnarray*}
Put, for $n\in\mathbb{Z}$,
\begin{eqnarray}
 &  & \hskip-3.5em\varphi_{n}=(q^{n+\gamma};q){}_{\infty}\,_{1}\phi_{1}(q^{\alpha};q^{n+\gamma};q,-q^{n+\gamma}z),\label{eq:q-confluent_sol_phi}\\
\noalign{\smallskip} &  & \hskip-3.5em\psi_{n}=q^{-\alpha(n+\gamma)-(n+\gamma-1)(n+\gamma-2)/2}\,\frac{(q^{n+\gamma-\alpha};q){}_{\infty}}{(q^{n+\gamma-1};q){}_{\infty}}\, z^{1-n-\gamma}\,_{1}\phi_{1}(q^{\alpha-n-\gamma+1};q^{2-n-\gamma};q,-qz).\label{eq:q-confluent_sol_psi}
\end{eqnarray}
Here $z,\alpha,\gamma\in\mathbb{C}$, $q^{\gamma}\notin q^{\mathbb{Z}}$.
The recurrence rules imply that both $\left\{ \varphi_{n}\right\} $
and $\left\{ \psi_{n}\right\} $ solve the three-term difference equation
\begin{equation}
q^{\alpha+\gamma+n-1}\left(1-q^{\gamma-\alpha+n}\right)zu_{n+1}-\left(1-q^{\gamma+n-1}+q^{\gamma+n-1}z\right)u_{n}+u_{n-1}=0,\text{ }n\in\mathbb{Z}.\label{eq:q-confluent_diff_eq}
\end{equation}

\begin{lemma} The sequences $\left\{ \varphi_{n}\right\} $ and $\left\{ \psi_{n}\right\} $
defined in (\ref{eq:q-confluent_sol_phi}) and (\ref{eq:q-confluent_sol_psi}),
respectively, fulfill 
\begin{equation}
\varphi_{0}\psi_{1}-\varphi_{1}\psi_{0}=q^{-\alpha(\gamma+1)-\frac{1}{2}\gamma(\gamma-1)}(q^{\gamma-\alpha+1};q){}_{\infty}(-q^{\alpha}z;q){}_{\infty}\, z^{-\gamma}.\label{eq:q-confluent_wronskian}
\end{equation}
Alternatively, (\ref{eq:q-confluent_wronskian}) can be rewritten
as
\begin{eqnarray*}
 &  & \hskip-1em\,_{1}\phi_{1}(q^{\alpha};q^{\gamma};q,q^{\gamma-\alpha}z)\,_{1}\phi_{1}(q^{\alpha-\gamma};q^{1-\gamma};q,q^{1-\alpha}z)\\
\noalign{\smallskip} &  & \hskip-1em+\,\frac{q^{\gamma-1}\left(1-q^{\gamma-\alpha}\right)z}{\left(1-q^{\gamma-1}\right)\left(1-q^{\gamma}\right)}\,\,_{1}\phi_{1}(q^{\alpha};q^{\gamma+1};q,q^{\gamma-\alpha+1}z)\,_{1}\phi_{1}(q^{\alpha-\gamma+1};q^{2-\gamma};q,q^{1-\alpha}z)=(z;q)_{\infty}.
\end{eqnarray*}
\end{lemma}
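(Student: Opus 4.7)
My plan is to compute $W_n := \varphi_n\psi_{n+1} - \varphi_{n+1}\psi_n$ by exploiting a simple Casoratian-type recurrence forced by (\ref{eq:q-confluent_diff_eq}), and then to extract the value of $W_0$ from a limit as $n\to+\infty$ where $\varphi_n$ has clean asymptotics. First I write (\ref{eq:q-confluent_diff_eq}) as $A_n u_{n+1} - B_n u_n + u_{n-1}=0$ with $A_n=q^{\alpha+\gamma+n-1}(1-q^{\gamma-\alpha+n})z$. Since both $\{\varphi_n\}$ and $\{\psi_n\}$ satisfy this recurrence, a one-line manipulation yields $A_n W_n = W_{n-1}$, so $W_0 = A_1 A_2\cdots A_n\, W_n$ for every $n\in\mathbb{N}$, and in particular $W_0=\lim_{n\to\infty}A_1\cdots A_n\,W_n$.

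Second, I analyze the behavior as $n\to+\infty$. Since $(q^{n+\gamma};q)_\infty\to 1$ and the argument $-q^{n+\gamma}z\to 0$, one immediately gets $\varphi_n=1+O(q^n)$. For $\psi_n$, using
\[
(q^{\alpha-n-\gamma+1};q)_k = (-1)^k q^{k(\alpha-n-\gamma+1)+k(k-1)/2}\bigl(1+O(q^n)\bigr)
\]
(and the same for $(q^{2-n-\gamma};q)_k$), the quotient reduces termwise to $q^{k(\alpha-1)}$ and the Euler identity $\sum_k q^{k(k-1)/2}/(q;q)_k\,\zeta^k=(-\zeta;q)_\infty$ gives ${}_1\phi_1(q^{\alpha-n-\gamma+1};q^{2-n-\gamma};q,-qz)\to (-q^\alpha z;q)_\infty$. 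Together with $(q^{n+\gamma-\alpha};q)_\infty/(q^{n+\gamma-1};q)_\infty\to 1$, this yields an explicit leading asymptotic for $\psi_n$.

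Third, since $\psi_{n+1}/\psi_n\sim q^{1-\alpha-n-\gamma}z^{-1}\to\infty$, the term $\psi_{n+1}$ dominates $\psi_n$ and $W_n=\psi_{n+1}(1+O(q^n))$. Multiplying by $A_1\cdots A_n=z^n q^{n(\alpha+\gamma)+n(n-1)/2}(q^{\gamma-\alpha+1};q)_n$ and tracking the exponents of $z$ and $q$, one checks the identity
\[
n(\alpha+\gamma)+\tfrac{n(n-1)}{2}-\alpha(n+\gamma+1)-\tfrac{(n+\gamma)(n+\gamma-1)}{2}=-\alpha(\gamma+1)-\tfrac{\gamma(\gamma-1)}{2},
\]
which is independent of $n$, together with the $z$-exponent $n-(n+\gamma)=-\gamma$. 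The growth cancels and the limit collapses to the claimed formula (\ref{eq:q-confluent_wronskian}). For the alternative form, I expand $\varphi_0\psi_1-\varphi_1\psi_0$ from the definitions (\ref{eq:q-confluent_sol_phi}) and (\ref{eq:q-confluent_sol_psi}), simplify using $(q^{\gamma+1};q)_\infty/(q^{\gamma-1};q)_\infty=1/[(1-q^{\gamma-1})(1-q^\gamma)]$ and $(q^{\gamma-\alpha};q)_\infty=(1-q^{\gamma-\alpha})(q^{\gamma-\alpha+1};q)_\infty$, and finally make the rescaling $z\mapsto -q^{-\alpha}z$; this turns $(-q^\alpha z;q)_\infty$ into $(z;q)_\infty$ and converts the arguments $-q^{\gamma+j}z$ and $-qz$ into $q^{\gamma-\alpha+j}z$ and $q^{1-\alpha}z$ respectively, matching the stated identity.

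The main obstacle is the bookkeeping in Step 3: one has to be careful that the $O(q^n)$ errors from the approximations of $\varphi_n$ and of the $_1\phi_1$ factor of $\psi_n$ really multiply into $O(q^n)$ relative errors in $\psi_{n+1}$ (and not something worse after multiplication by the exponentially large $A_1\cdots A_n$), and then to verify the exact cancellation of the $q^{n^2/2}$ and $z^n$ pieces. A minor but worthwhile check is that the Wronskian-recurrence step $A_n W_n=W_{n-1}$ goes through in the present normalization $C_n=1$ of the middle coefficient, which is what fixes the sign and prevents spurious $(-1)^n$ factors.
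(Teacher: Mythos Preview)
Your proof is correct and follows essentially the same approach as the paper: both arguments observe that the Casoratian satisfies $A_1\cdots A_n\,W_n=W_0$ (the paper writes this as the constancy of $q^{\frac{1}{2}n(n-1)+(\alpha+\gamma)n}(q^{\gamma-\alpha+1};q)_n\,z^n\,W_n$), compute the asymptotics $\varphi_n=1+O(q^n)$ and the explicit leading behavior of $\psi_n$, and extract $W_0$ from the limit $n\to\infty$. Your write-up supplies a bit more detail than the paper on how the asymptotic of the ${}_1\phi_1$ factor in $\psi_n$ reduces to Euler's product $(-q^\alpha z;q)_\infty$, and on the substitution $z\mapsto -q^{-\alpha}z$ that produces the alternative identity, but the method is the same.
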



\begin{proof} Checking the Wronskian of the solutions $\varphi_{n}$
and $\psi_{n}$ one finds that
\begin{equation}
q^{\frac{1}{2}n(n-1)+(\alpha+\gamma)n}\,(q^{\gamma-\alpha+1};q){}_{n}\, z^{n}(\varphi_{n}\psi_{n+1}-\varphi_{n+1}\psi_{n})=C\label{eq:aux_q-confluent_C}
\end{equation}
is a constant independent of $n$. In particular, $\varphi_{0}\psi_{1}-\varphi_{1}\psi_{0}=C$.
It is straightforward to examine the asymptotic behavior for large
$n$ of the solutions in question getting $\varphi_{n}=1+O(q^{n})$
and
\[
\psi_{n}=q^{-\frac{1}{2}n(n-1)-(\alpha+\gamma-1)n-\frac{1}{2}(\gamma-1)(\gamma-2)-\alpha\gamma}\, z^{1-\gamma-n}\,(-q^{\alpha}z;q){}_{\infty}\left(1+O(q^{n})\right).
\]
Sending $n$ to infinity in (\ref{eq:aux_q-confluent_C}) one finds
that $C$ equals the RHS of (\ref{eq:q-confluent_wronskian}). \end{proof}

\begin{proposition} For $\alpha,\gamma,z\in\mathbb{C}$,
\begin{equation}
\mathfrak{F}\!\left(\left\{ \frac{q^{\frac{1}{2}(\alpha+\gamma+k)-\frac{3}{4}}\,(q^{\gamma-\alpha+k};q^{2}){}_{\infty}\,\sqrt{z}}{(q^{\gamma-\alpha+k+1};q^{2}){}_{\infty}\left(1-(1-z)q^{\gamma+k-1}\right)}\right\} _{\! k=1}^{\!\infty}\right)\!=\frac{(q^{\gamma};q){}_{\infty}}{((1-z)q^{\gamma};q){}_{\infty}}\,\,_{1}\phi_{1}(q^{\alpha};q^{\gamma};q,-q^{\gamma}z).\label{eq:q-confluent_calF}
\end{equation}
\end{proposition}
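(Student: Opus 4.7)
The natural approach is to apply Lemma~\ref{lem:Fn_eq_calF}: I will identify two sequences that solve the same second-order difference equation with the same boundary condition at infinity, and therefore must coincide. Denote by $x_k$ the $k$-th entry of the argument sequence on the LHS of (\ref{eq:q-confluent_calF}) and set
\[
f_{1,n}:=\mathfrak{F}\!\left(\{x_{k}\}_{k=n}^{\infty}\right)\!,\qquad f_{2,n}:=\frac{(q^{\gamma+n-1};q)_{\infty}}{((1-z)q^{\gamma+n-1};q)_{\infty}}\,{}_{1}\phi_{1}\!\left(q^{\alpha};q^{\gamma+n-1};q,-q^{\gamma+n-1}z\right)\!,
\]
so that the proposition amounts to the equality $f_{1,1}=f_{2,1}$. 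By (\ref{eq:F_T_recur}), $f_{1,n}$ satisfies $f_{1,n}-f_{1,n+1}+x_{n}x_{n+1}f_{1,n+2}=0$ and $\lim_{n\to\infty}f_{1,n}=1$, and Lemma~\ref{lem:Fn_eq_calF} asserts this characterizes the sequence uniquely.

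The first step is a direct computation of $x_{n}x_{n+1}$. Using the telescoping identity $(q^{\gamma-\alpha+n};q^{2})_{\infty}=(1-q^{\gamma-\alpha+n})(q^{\gamma-\alpha+n+2};q^{2})_{\infty}$ and adding the exponents of $q$ in $x_{n}$ and $x_{n+1}$, one obtains the closed form
\[
x_{n}x_{n+1}=\frac{q^{\alpha+\gamma+n-1}\,(1-q^{\gamma-\alpha+n})\,z}{\bigl(1-(1-z)q^{\gamma+n-1}\bigr)\bigl(1-(1-z)q^{\gamma+n}\bigr)}.
\]
The second and main step is to verify that $\{f_{2,n}\}$ satisfies the same three-term recurrence. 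I will take the recurrence (\ref{eq:q-confluent_recurr1}), substitute $z\mapsto -z$, use $1+\frac{q^{\gamma}z}{1-q^{\gamma}}=\frac{1-(1-z)q^{\gamma}}{1-q^{\gamma}}$, shift $\gamma\mapsto\gamma+n-1$, and multiply through by the prefactor $\frac{(q^{\gamma+n};q)_{\infty}}{((1-z)q^{\gamma+n};q)_{\infty}}$ together with appropriate ratios $\frac{1-q^{\gamma+n-1}}{1-(1-z)q^{\gamma+n-1}}$. After the telescoping cancellations of the $(q^{\cdot};q)_{\infty}$ and $((1-z)q^{\cdot};q)_{\infty}$ factors, the three terms become exactly $f_{2,n}$, $-f_{2,n+1}$, and a multiple of $f_{2,n+2}$ whose coefficient matches $x_{n}x_{n+1}$ computed above. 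This matching of coefficients is the bookkeeping centerpiece of the argument and is where I expect the only real care to be required.

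The third step is the asymptotic check. As $n\to\infty$ one has $q^{\gamma+n-1}\to 0$, hence both infinite products tend to $1$ and $_{1}\phi_{1}(q^{\alpha};q^{\gamma+n-1};q,-q^{\gamma+n-1}z)\to 1$ because all terms with $k\geq 1$ in its defining series carry a factor $q^{(\gamma+n-1)k}\to 0$ uniformly. Thus $\lim_{n\to\infty}f_{2,n}=1$. Finally, the convergence condition (\ref{eq:x_converg_cond}) is trivially satisfied since $|x_{n}x_{n+1}|=O(q^{n})$, so Lemma~\ref{lem:Fn_eq_calF} applies and forces $f_{1,n}=f_{2,n}$ for every $n\in\mathbb{N}$; taking $n=1$ yields (\ref{eq:q-confluent_calF}). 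The identity should be interpreted as an equality of meromorphic functions in the parameters, with the generic assumptions $q^{\gamma}\notin q^{\mathbb{Z}}$ and $(1-z)q^{\gamma}\notin q^{-\mathbb{Z}_{+}}$ needed only to make each side unambiguously defined; the general case follows by analytic continuation.
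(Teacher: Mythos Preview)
Your proposal is correct and follows essentially the same route as the paper: define $F_n=f_{2,n}$, compute $x_nx_{n+1}$ (the paper's $s_nz$), reduce the recurrence $F_n-F_{n+1}+x_nx_{n+1}F_{n+2}=0$ to (\ref{eq:q-confluent_recurr1}) with $z\mapsto -z$, check $\lim_{n\to\infty}F_n=1$, and invoke Lemma~\ref{lem:Fn_eq_calF}. Your write-up is in fact slightly more explicit than the paper's in spelling out the telescoping of the Pochhammer prefactors and the verification of the convergence condition (\ref{eq:x_converg_cond}).
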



\begin{proof} The both sides of the identity are regarded as meromorphic
functions in $z$. Setting $\Im\gamma$ to a constant, the both sides
tend to $1$ as $\Re\gamma$ tends to $+\infty$. In virtue of Lemma~\ref{lem:Fn_eq_calF},
it suffices to verify that the sequence
\[
F_{n}=\frac{(q^{\gamma+n-1};q){}_{\infty}}{((1-z)q^{\gamma+n-1};q){}_{\infty}}\,\,_{1}\phi_{1}(q^{\alpha};q^{\gamma+n-1};q,-q^{\gamma+n-1}z),\text{ }n\in\mathbb{N},
\]
satisfies the three-term recurrence relation $F_{n}-F_{n+1}+s_{n}zF_{n+2}=0$,
$n\in\mathbb{N}$, where
\[
s_{n}=\frac{q^{\alpha+\gamma+n-1}\left(1-q^{\gamma-\alpha+n}\right)}{\left(1-(1-z)q^{\gamma+n-1}\right)\left(1-(1-z)q^{\gamma+n}\right)}\,.
\]
Since $\gamma$ here is arbitrary one can consider just the equality
for $n=1$. But then the three-term recurrence coincides with (\ref{eq:q-confluent_recurr1})
(provided $z$ is replaced by $-z$). \end{proof}

Let us now focus on equation (\ref{eq:q-confluent_diff_eq}). One
can extract from it a solvable eigenvalue problem for a Jacobi matrix
obeying the convergence condition (\ref{eq:assum_sum_w}).

\begin{proposition}\label{prop:q-confluent_specJ} For $\sigma\in\mathbb{R}$
and $\gamma>-1$, let $J=J(\sigma,\gamma)$ be the Jacobi matrix operator
in $\ell^{2}(\mathbb{N})$ defined by (\ref{eq:matrixJ}) and
\begin{equation}
w_{n}=\frac{1}{2}\sinh(\sigma)q^{(n-\gamma-1)/2}\sqrt{1-q^{n+\gamma}}\,,\ \lambda_{n}=q^{n-1}.\label{eq:q-confluent_w_lbd}
\end{equation}
Then $z\neq0$ is an eigenvalue of $J(\sigma,\gamma)$ if and only
if
\[
\left(\cosh^{2}(\sigma/2)z^{-1};q\right){}_{\!\infty}\,\,_{1}\phi_{1}\!\left(q^{-\gamma}\cosh^{2}(\sigma/2)z^{-1};\cosh^{2}(\sigma/2)z^{-1};q,-\sinh^{2}(\sigma/2)z^{-1}\right)=0.
\]
Moreover, if $z\neq0$ solves this characteristic equation then the
sequence $\left\{ v_{n}\right\} _{n=1}^{\infty}$, with
\begin{eqnarray}
v_{n} & = & q^{-\frac{1}{2}\gamma n+\frac{1}{4}n(n-3)}\,\frac{\sinh^{n}(\sigma)\,(2z)^{-n}}{\sqrt{(q^{\gamma+n};q){}_{\infty}}}\left(q^{n}\cosh^{2}\left(\frac{\sigma}{2}\right)z^{-1};q\right){}_{\!\infty}\nonumber \\
\noalign{\smallskip} &  & \times\,\,_{1}\phi_{1}\!\left(q^{-\gamma}\cosh^{2}\!\left(\frac{\sigma}{2}\right)z^{-1};q^{n}\cosh^{2}\!\left(\frac{\sigma}{2}\right)z^{-1};q,-q^{n}\sinh^{2}\!\left(\frac{\sigma}{2}\right)z^{-1}\right),\label{eq:q-confluent_v_n}
\end{eqnarray}
is a corresponding eigenvector. \end{proposition}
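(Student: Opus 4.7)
The plan is to apply the general framework from Section~\ref{sec:Preliminaries}. Since $w_{n}^{\,2}=O(q^{n})$ and $\lambda_{n}\to 0$, the convergence condition (\ref{eq:assum_sum_w}) holds for every $z_{0}\in\mathbb{C}_{0}^{\lambda}$, so the functions $\xi_{k}$ in (\ref{eq:def_xi_k_meromorph}) are well defined on $\mathbb{C}_{0}^{\lambda}$. By (\ref{eq:specJ_xi0}) the eigenvalues $z\neq 0$ are precisely the zeros of $\xi_{0}$, and any eigenvector is given by $(\xi_{1}(z),\xi_{2}(z),\ldots)$. Hence it suffices to evaluate the object $\mathfrak{F}(\{\gamma_{l}^{\,2}/(\lambda_{l}-z)\}_{l=n+1}^{\infty})$ for every $n\in\mathbb{Z}_{+}$ in closed form and then to simplify (\ref{eq:def_xi_k_meromorph}).

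Set $c:=\cosh^{2}(\sigma/2)$, $s:=\sinh^{2}(\sigma/2)$, so that $c-s=1$ and $4cs=\sinh^{2}\sigma$. The idea is to invoke the evaluation identity (\ref{eq:q-confluent_calF}), whose parameters I momentarily rename $(a,b,\zeta)$ to avoid clashing with $(\alpha,\gamma,z)$ of the proposition. Choosing $q^{a}=q^{-\gamma}c/z$, $q^{b}=q^{n}c/z$, and $\zeta=s/c$ (so $1-\zeta=1/c$), the left-hand side of (\ref{eq:q-confluent_calF}), after reindexing $k=l-n$, takes the form $\{\mu_{l}/(\lambda_{l}-z)\}_{l=n+1}^{\infty}$ with
\[
\mu_{l}\;=\;-\,\frac{\sinh(\sigma)}{2}\,q^{(l-\gamma)/2-3/4}\,\frac{(q^{l+\gamma};q^{2})_{\infty}}{(q^{l+\gamma+1};q^{2})_{\infty}}\,.
\]
A short $q$-Pochhammer computation yields $\mu_{l}\mu_{l+1}=w_{l}^{\,2}$, so $\gamma_{l}^{\,2}=\mu_{l}$ is an admissible choice (only $\gamma_{l}^{\,2}$ enters (\ref{eq:def_xi_k_meromorph}), so the sign of $\mu_{l}$ is immaterial). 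Consequently (\ref{eq:q-confluent_calF}) delivers
\[
\mathfrak{F}\!\left(\left\{\frac{\gamma_{l}^{\,2}}{\lambda_{l}-z}\right\}_{l=n+1}^{\infty}\right)\;=\;\frac{(q^{n}c/z;q)_{\infty}}{(q^{n}/z;q)_{\infty}}\,\,{}_{1}\phi_{1}\!\left(q^{-\gamma}c/z;\,q^{n}c/z;\,q,\,-q^{n}s/z\right)\!.
\]

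Setting $n=0$ gives $\xi_{0}(z)$, and since the zeros of $(1/z;q)_{\infty}$ are exactly the $\lambda_{k}$'s (removed from $\mathbb{C}_{0}^{\lambda}$), formula (\ref{eq:specJ_xi0}) immediately yields the asserted characteristic equation. For the eigenvector components I plug the above evaluation into (\ref{eq:def_xi_k_meromorph}) and use
\[
\prod_{l=1}^{n}(z-\lambda_{l})\;=\;z^{n}(1/z;q)_{n}\;=\;z^{n}\,\frac{(1/z;q)_{\infty}}{(q^{n}/z;q)_{\infty}}\,,
\]
which causes the surviving factor $(1/z;q)_{\infty}$ to cancel cleanly; together with the explicit form of $\prod_{l=0}^{n-1}w_{l}$ this reproduces (\ref{eq:q-confluent_v_n}) up to an overall multiplicative constant (an eigenvector is defined only up to a scalar). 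The main obstacle is the bookkeeping in this last step: matching the accumulated $q$-powers, the factor $\sqrt{(q^{\gamma+n};q)_{\infty}}$ in the denominator of $v_{n}$, and the sign conventions of the imaginary $\gamma_{l}$'s so that everything reassembles into the stated formula. No substantive new idea is required, but the algebra is dense.
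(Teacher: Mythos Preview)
Your argument is correct, but it is not the route taken in the paper's own proof. The paper proceeds by direct transformation of the difference equation: it starts from the three-term recurrence (\ref{eq:q-confluent_diff_eq}) for $_{1}\phi_{1}$, applies two successive substitutions (first $\gamma\to\tilde\gamma+\alpha$, $z=q^{\beta}$, $u_{n}=q^{-\alpha n}\tilde u_{n}$, then $q^{\beta/2}=\tanh(\sigma/2)$, $q^{\alpha}=q^{-\gamma}\cosh^{2}(\sigma/2)\tilde z^{-1}$, $u_{n}=\phi_{n}\tilde u_{n}$) which carry (\ref{eq:q-confluent_diff_eq}) into the Jacobi recurrence with entries (\ref{eq:q-confluent_w_lbd}), and simultaneously carry the explicit solution $\varphi_{n}$ of (\ref{eq:q-confluent_sol_phi}) into the sequence (\ref{eq:q-confluent_v_n}). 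It then argues square-summability of $\{v_{n}\}$ directly and invokes the limit point property to conclude that the characteristic equation is $v_{0}=0$.

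You instead work entirely within the $\mathfrak{F}$-framework of Section~\ref{sec:Preliminaries}: you verify (\ref{eq:assum_sum_w}), match parameters in the evaluation identity (\ref{eq:q-confluent_calF}) to produce $\mathfrak{F}(\{\gamma_{l}^{\,2}/(\lambda_{l}-z)\}_{l=n+1}^{\infty})$ in closed form, and then read off $\xi_{0}$ and the $\xi_{n}$ from (\ref{eq:def_xi_k_meromorph}). This is exactly the alternative the paper sketches in the Remark immediately following its proof; your parameter choices $q^{a}=q^{-\gamma}c/z$, $q^{b}=q^{n}c/z$, $\zeta=s/c$ and the verification $\mu_{l}\mu_{l+1}=w_{l}^{\,2}$ are correct. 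The paper's substitution approach is more self-contained (it does not presuppose (\ref{eq:q-confluent_calF})), whereas your approach avoids the ad hoc chain of substitutions and makes the connection to the general machinery transparent. Two small points you should state explicitly to be complete: that $\xi_{0}$ does not vanish identically (obvious from its form, but required to invoke (\ref{eq:specJ_xi0})), and that the points $z=\lambda_{k}$ are handled by the meromorphic extension in (\ref{eq:def_xi_k}), since (\ref{eq:specJ_xi0}) covers all of $\mathbb{C}\setminus\der(\lambda)=\mathbb{C}\setminus\{0\}$, not only $\mathbb{C}_{0}^{\lambda}$.
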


\begin{remark} Notice that the matrix operator $J(\sigma,\gamma)$
is compact (even trace class). \end{remark}

\begin{proof} First, apply in (\ref{eq:q-confluent_diff_eq}) the
substitution
\[
\gamma=\tilde{\gamma}+\alpha,\ z=q^{\beta},\ u_{n}=q^{-\alpha n}\tilde{u}_{n},
\]
and then forget about the tilde over $\gamma$ and $u$. Next use
the substitution
\[
q^{\beta/2}=\tanh\!\left(\frac{\sigma}{2}\right)\!,\ q^{\alpha}=q^{-\gamma}\cosh^{2}\!\left(\frac{\sigma}{2}\right)\tilde{z}^{-1},\ u_{n}=\phi_{n}\tilde{u}_{n}.
\]
where $\left\{ \phi_{n}\right\} $ is a sequence obeying
\[
\frac{\phi_{n}}{\phi_{n+1}}=q^{(\beta+\gamma+n-1)/2}\,\sqrt{1-q^{\gamma+n}}\,.
\]
Up to a constant multiplier, $\phi_{n}{}^{2}=q^{-\beta n-\gamma n-\frac{1}{2}n(n-3)}\,(q^{\gamma+n};q){}_{\infty}$.
We again forget about the tildes over $z$ and $u$, and restrict
the values of the index $n$ to natural numbers. If $u_{0}=0$ then
the transformed sequence $\left\{ u_{k}\right\} _{k=1}^{\infty}$
solves the Jacobi eigenvalue problem (\ref{eq:eigenvalue}) with $w_{n}$
and $\lambda_{n}$ given in (\ref{eq:q-confluent_w_lbd}).

Further apply the same sequence of transformations to the solution
$\varphi_{n}$ in (\ref{eq:q-confluent_sol_phi}). Let us call the
resulting sequence $\left\{ v_{n}\right\} $. A straightforward computation
yields (\ref{eq:q-confluent_v_n}). Clearly, the sequence $\{v_{k};\ k\geq1\}$
is square summable. On general grounds, since $J(\sigma,\gamma)$
falls into the limit point case, any other linearly independent solution
of the recurrence in question, (\ref{eq:2ndorder_diff_eq_param_z}),
cannot be square summable. Hence the characteristic equation for this
eigenvalue problem reads $v_{0}=0$. This shows the proposition. \end{proof}

\begin{remark} In the particular case $\gamma=0$ the characteristic
equation simplifies to the form
\[
\left(\cosh^{2}(\sigma/2)z^{-1};q\right){}_{\!\infty}\left(-\sinh^{2}(\sigma/2)z^{-1};q\right){}_{\!\infty}=0.
\]
Hence in that case, apart of $z=0$, one knows the point spectrum
fully explicitly,
\[
\text{spec}J(\sigma,0)\setminus\{0\}=\left\{ q^{k}\cosh^{2}(\sigma/2);\, k=0,1,2,\ldots\right\} \cup\left\{ -q^{k}\sinh^{2}(\sigma/2);\, k=0,1,2,\ldots\right\} \!.
\]
\end{remark}

\begin{remark} Of course, Proposition~\ref{prop:q-confluent_specJ}
can be as well derived using formulas (\ref{eq:def_xi_k_meromorph}),
(\ref{eq:specJ_xi0}), while knowing that (\ref{eq:assum_sum_w})
is fulfilled. To evaluate $\xi_{n}(z)$ one can make use of (\ref{eq:q-confluent_calF}).
Applying the same series of substitutions as above to equation (\ref{eq:q-confluent_calF})
one gets
\begin{eqnarray*}
 &  & \mathfrak{F}\!\left(\left\{ \frac{q^{\frac{1}{2}(k-\gamma)-\frac{3}{4}}\,\sinh(\sigma)\left(q^{\gamma+k};q^{2}\right){}_{\!\infty}}{2\left(q^{\gamma+k+1};q^{2}\right){}_{\!\infty}\left(q^{k-1}-z\right)}\right\} _{\! k=1}^{\!\infty}\right)\\
\noalign{\smallskip} &  & =\,\frac{\left(\cosh^{2}(\sigma/2)z^{-1};q\right){}_{\!\infty}}{\left(z^{-1};q\right){}_{\!\infty}}\,\,_{1}\phi_{1}\!\left(q^{-\gamma}\cosh^{2}\!\left(\frac{\sigma}{2}\right)z^{-1};\cosh^{2}\!\left(\frac{\sigma}{2}\right)z^{-1};q,-\sinh^{2}\!\left(\frac{\sigma}{2}\right)z^{-1}\right)\!.
\end{eqnarray*}
Then a straightforward computation yields

\[
\xi_{n}(z)=\frac{2\, q^{(\gamma+1)/2}\,\sqrt{(q^{\gamma+1};q){}_{\infty}}}{\sinh(\sigma)\left(z^{-1};q\right){}_{\!\infty}}\, v_{n},\text{ }n=0,1,2,\ldots,
\]
with $v_{n}$ being given in (\ref{eq:q-confluent_v_n}). \end{remark}

\section*{Acknowledgments}

One of the authors (F.\v{S}.) wishes to acknowledge gratefully partial
support from grant No. SGS12/198/OHK4/3T/14 of the Grant Agency of
the Czech Technical University in Prague.


\begin{thebibliography}{10}
\bibitem{AbramowitzStegun} M.~Horowitz, I.~A.~Stegun: \emph{Handbook
of Mathematical Functions with Formulas, Graphs, and Mathematical
Tables}, (Dover Publications, New York, 1972).

\bibitem{Akhiezer} N.~I. Akhiezer:\textit{ The Classical Moment
Problem and Some Related Questions in Analysis}, (Oliver $\&$ Boyd,
Edinburgh, 1965).

\bibitem{Beckerman} B.~Beckerman: \emph{Complex Jacobi matrices},
J.~Comput. Appl. Math. \textbf{127} (2001) 17-65.

\bibitem{GardZakrajsek}J.~Gard, E.~Zakraj\v{s}ek: \emph{Method
for evaluation of zeros of Bessel functions}, J.~Inst. Math. Appl.
\textbf{11} (1973) 57-72.

\bibitem{GasperRahman} G.~Gasper, M.~Rahman:\textit{ Basic Hypergeometric
Series} (Cambridge University Press, Cambridge, 1990).

\bibitem{Gautschi} W.~Gautschi: \emph{Computational aspects of three-term
recurrence relations}, SIAM Rev. \textbf{9} (1967) 24-82.

\bibitem{GradshteynRyzhik} I.~S. Gradshteyn, I.~M. Ryzhik: \textit{ Table
of Integrals, Series, and Products}, Edited by A. Jeffrey and D. Zwillinger
(Academic Press, Amsterdam, 2007).

\bibitem{Ikebe} Y.~Ikebe: \textit{The zeros of regular Coulomb wave
functions}, Math. Comp. \textbf{29} (1975) 878-887.

\bibitem{IkebeKikuchiFujishiro} Y.~Ikebe, Y.~Kikuchi, I.~Fujishiro:
\emph{Computing zeros and orders of Bessel functions}, J.~Comput.
Appl. Math. \textbf{38} (1991) 169-184.

\bibitem{Ikebeatal93} Y.~Ikebe, Y.~Kikuchi, I.~Fujishiro, N.~Asai,
K.~Takanashi, M.~Harada: \emph{The eigenvalue problem for infinite
compact complex symmetric matrices with application to the numerical
computation of complex zeros of $J_{0}(z)-iJ_{1}(z)$ and of Bessel
functions $J_{m}(z)$ of any real order $m$}, Linear Alg. Appl. \textbf{194}
(1993) 35\textendash{}70.

\bibitem{Jackson} F.~H. Jackson:\textit{ On generalized functions
of Legendre and Bessel}, Trans. Roy. Soc. Edinburgh \textbf{41} (1903)
1-28.

\bibitem{MiyazakiKikuchiCaiIkebe} Y.~Miyazaki, Y.~Kikuchi, DongSheng
Cai, Y.~Ikebe: \emph{Error analysis for the computation of zeros
of regular Coulomb wave function and its first derivative}, Math.
Comp. \textbf{70} (2001) 1195-1204.

\bibitem{Rahman}M.~Rahman:\textit{ An addition theorem and some
product formulas for q-Bessel functions}, Can. J.~Math. \textbf{40}
(1988) 1203-1221.

\bibitem{StampachStovicek} F.~\v{S}tampach, P.~\v{S}\v{t}ov\'\i\v{c}ek:
\emph{On the eigenvalue problem for a particular class of finite Jacobi
matrices}, Linear Alg. Appl. \textbf{434} (2011) 1336-1353.

\bibitem{StampachStovicek2} F.~\v{S}tampach, P.~\v{S}\v{t}ov\'\i\v{c}ek:
\emph{The characteristic function for Jacobi matrices with applications},
arXiv:1201.1743v1 {[}math.SP{]} (submitted).\end{thebibliography}
\end{document}